\newcommand{\showcomments}{yes}
\renewcommand{\showcomments}{no}
\newsavebox{\commentbox}
\theoremstyle{definition} 
\newtheorem{thm}{Theorem}[section]
\newtheorem{lem}[thm]{Lemma}
\newtheorem{cor}[thm]{Corollary}
\newtheorem{prop}[thm]{Proposition}
\newtheorem{theorem}[thm]{Theorem}
\newtheorem{lemma}[thm]{Lemma}
\newtheorem{conjecture}[thm]{Conjecture}
\theoremstyle{definition}
\newtheorem{defn}[thm]{Definition}
\newtheorem{definition}[thm]{Definition}
\theoremstyle{remark}
\newtheorem{rem}[thm]{Remark}
\newtheorem{nonexmp}[thm]{Non-example}
\newtheorem{remark}[thm]{Remark}
\newtheorem{construction}[thm]{Construction}
\newcommand{\nclose}[1]{\ensuremath{\langle\!\langle#1\rangle\!\rangle}}
\newcommand{\dist}{\textup{\textsf{d}}}
\DeclareMathOperator{\diam}{\text{diam}}
\newcommand{\field}[1]{\mathbb{#1}}
\newcommand{\integers}{\ensuremath{\field{Z}}}
\newcommand{\naturals}{\ensuremath{\field{N}}}
\newcommand{\euler}{\chi}
\newcommand{\C}{\mathcal{C}}
\newcommand{\Hem}{\mathcal{H}}
\newcommand{\neb}{\mathcal N}
\DeclareMathOperator{\rank}{rank}
\DeclareMathOperator{\sys}{\mathrm sys}
\DeclareMathOperator{\Aut}{Aut}
\DeclareMathOperator{\Stab}{Stab}
\DeclareMathOperator{\Comp}{Comp}
\title[Small-cancellation quotients of non-products]
{Cubical small-cancellation quotients of non-products}
\author{Macarena Arenas}
\author{Kasia Jankiewicz}
\author{Daniel T. Wise}
        \address{Clare College \\
            University of Cambridge \\
            United Kingdom and \\
            Centre for Mathematical Sciences\\
			University of Cambridge\\
			United Kingdom}
	\email{mcr59@cam.ac.uk}
        \address{Department of Mathematics\\
			University of California\\
			Santa Cruz, CA, USA}
	\curraddr{Department of Mathematics\\
			University of British Columbia\\
			Vancouver, British Columbia, Canada}
	\email{kasia@math.ubc.ca}
	\address{Department of Mathematics \& Statistics\\
                    McGill University \\
                    Montreal, Quebec, Canada}
        \curraddr{Faculty of Mathematics and Computer Science\\
                    Weizmann Institute of Science  \\
                    Rehovot, Israel}
        \email{daniel.wise@weizmann.ac.il}
\begin{document}

\begin{abstract}
We show that if $X$ is a compact nonpositively curved cube complex which is a non-product, then $\pi_1 X$  has a proper quotient that is $\pi_1$ of a cube  complex with these same properties.

\end{abstract}
\setcounter{tocdepth}{1}
\maketitle

\vspace{-15pt}
\tableofcontents
\vspace{-15pt}

\section{Introduction}

Burger and Mozes~\cite{BurgerMozes97} constructed nonpositively curved cube complexes $X$ with infinite simple $\pi_1X$.
In that case $\widetilde X$ is a product of two trees.
A nonpositively curved cube complex $X$ is a \emph{non-product} if
$\pi_1X$ is nontrivial, non-cyclic, and the universal cover $\widetilde X$ does not contain a $\pi_1X$-invariant convex subcomplex isomorphic to a nontrivial product of unbounded cube complexes.

We obtain the following attractive counterpoint to the Burger-Mozes result:

\begin{theorem}\label{thm: main intro cocompact}
[Cubically non-simple]
Let $G=\pi_1X$  where $X$ is a finite dimensional non-product nonpositively curved cube complex. Then $G$ has a nontrivial proper quotient $\bar G=\pi_1 \bar X$ with $\bar X$ a nonproduct nonpositively curved cube complex.
Moreover, if $X$ is compact, then $\bar X$ is compact.
\end{theorem}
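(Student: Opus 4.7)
The plan is to produce $\bar X$ via cubical small-cancellation, following the cubical presentation framework $\langle X \mid Y \rangle$ developed by Wise. I would kill a high power $g^n$ of a carefully chosen element $g \in G$ by coning off along a cubical thickening of its axis, verify the small-cancellation conditions for $n$ large, and then argue that the non-product property descends to $\bar X$.

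The first step is to extract a sufficiently irreducible element $g \in G$. Since $X$ is a non-product, $\widetilde X$ admits no $G$-invariant decomposition as a nontrivial product of unbounded cube complexes; combined with rank rigidity for CAT(0) cube complexes (Caprace--Sageev), this yields a rank-one, contracting hyperbolic element $g \in G$ with a convex-cocompact cyclic subgroup $\langle g \rangle$. The contracting property is precisely the quantitative fellow-travel control on translates of the axis that makes small-cancellation applicable.

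Next, I would form the cubical presentation. A combinatorial axis of $g^n$ descends to an immersed combinatorial loop in $X$, and a convex cubical thickening yields a local isometry $Y \to X$ with $\pi_1 Y = \langle g^n \rangle$. Since $g$ is contracting, the pieces, that is, the maximal overlaps between distinct translates of the relator in $\widetilde X$, have length uniformly bounded independently of $n$, so a cubical $C'(\tfrac{1}{24})$-type small-cancellation condition is satisfied once $n$ is large enough. The standard machinery of cubical small-cancellation then gives: the resulting cone-off $\bar X$ is nonpositively curved; $\pi_1 \bar X = G / \nclose{g^n}$ is a nontrivial proper quotient of $G$; and $\bar X$ is compact whenever $X$ is.

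The main obstacle will be showing $\bar X$ is still a non-product. Suppose for contradiction that $\widetilde{\bar X}$ contains a $\pi_1\bar X$-invariant convex subcomplex $K \cong A \times B$ with $A$ and $B$ unbounded. I would dichotomize on how $K$ meets the cone-cells implementing the relator. If $K$ avoids all cone interiors, then $K$ sits in the cubical part of $\widetilde{\bar X}$ and pulls back to a $G$-invariant convex product subcomplex of $\widetilde X$, contradicting the non-product hypothesis on $X$. Otherwise $K$ crosses a cone, and a disc-diagram analysis under the small-cancellation hypothesis forces the cone, and hence the axis of $g$, to respect the product decomposition, so the axis lies entirely in a single factor of $K$; this contradicts the rank-one property of $g$, since a rank-one axis cannot be parallel to any nontrivial product. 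The bulk of the technical work is in making this dichotomy rigorous and in controlling hyperplane parallelism classes across the small-cancellation quotient.
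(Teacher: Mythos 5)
Your proposal starts from the same place as the paper (Caprace--Sageev rank rigidity $\Rightarrow$ a rank-one element, then a cubical small-cancellation cone-off), but it has a fundamental gap: the cone-off space $X^* = \langle X \mid Y \rangle$ is \emph{not} a nonpositively curved cube complex --- it is $X$ with literal cones attached, so it has no cubical structure at all. The theorem demands a quotient which is $\pi_1$ of a non-product nonpositively curved cube complex $\bar X$, and obtaining that complex is the bulk of the work. The paper spends Sections~\ref{sec: disc diagrams}--\ref{sec:cocompact} building a wallspace structure on $\widetilde X^*$ (via the $B(6)$ condition), taking the dual CAT(0) cube complex $\mathcal{C}$, and then proving the action of $\pi_1 X^*$ on $\mathcal{C}$ is free, cocompact, and that $\mathcal{C}$ is a non-product. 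Your proposal says ``the resulting cone-off $\bar X$ is nonpositively curved'' as though this were part of the standard machinery, but it isn't; $C'(\alpha)$ alone gives you an aspherical coned-off space and control over the group, not a cube complex. Everything in your final paragraph about a product subcomplex $K$ ``meeting cone-cells'' is set in $\widetilde{X^*}$, which is the wrong space --- the product decomposition you need to rule out lives in $\mathcal{C}$, whose hyperplanes come from the $B(6)$ walls, not from hyperplanes of $\widetilde X$ or from cones.

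Two further issues are worth flagging. First, contracting/rank-one control bounds \emph{wall-pieces} (overlaps with hyperplane carriers) but not \emph{cone-pieces} (overlaps between distinct translates of $\widetilde Y$); for $C'(\alpha)$ you also need $\langle g \rangle$ to be malnormal in $\pi_1 X$, which a generic rank-one element need not be --- the paper handles this by passing to a malnormal free subgroup via the fiber-product $Y \otimes_X Y$ (Proposition~\ref{prop: no conj}, Lemma~\ref{lem:nonconjugate}). Second, the paper deliberately uses a \emph{rank~2} pseudograph relator rather than a cyclic one: in the proof that the dual $\mathcal{C}$ is a non-product (Theorem~\ref{thm:main with all properties}\eqref{conclusions:non-product}), it produces two pseudographs $Z_1, Z_2$, kills $Z_1$, and uses $Z_2$ surviving as a convex subcomplex of $\widetilde X^*$ carrying strongly separated walls. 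A cyclic relator gives you nothing to point at in the quotient, and your claim that the axis of $g$ ``must lie in a single factor of $K$, contradicting rank-one'' does not parse once $g^n$ is killed --- there is no longer an axis of $g$ to speak of in $\mathcal{C}$.
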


\begin{remark}
In Theorem~\ref{thm: main intro cocompact},
for any finite set $S \subset G$, the quotient $\bar G$ can be chosen so that the image of  every nontrivial element in $S$ is nontrivial in $\bar G$. 
\end{remark}

This result raises the following possibility, which is out of reach with current technology in general. Our work resolves it in the cubical setting.

\begin{conjecture}\label{conj: CAT(0)}
    Every CAT(0) group with a rank one element has a proper quotient with the same properties.
\end{conjecture}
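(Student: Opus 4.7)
The plan is to construct $\bar X$ via cubical small-cancellation, attaching a single cone to $X$ along the combinatorial circle obtained from the axis of a high power of a rank-one hyperbolic element of $G$. This approach uses the machinery named in the paper's title to guarantee that the quotient is again nonpositively curved, and then leverages the dichotomy between rank-one and product behaviour (Caprace--Sageev) to preserve the non-product property.

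First, since $X$ is a non-product NPC cube complex, an application of Caprace--Sageev to (the essential core of) $\widetilde X$ yields a rank-one hyperbolic element $g \in G$ with a contracting combinatorial axis $\gamma \subset \widetilde X$. For sufficiently large $n$, the loop $c_n = \gamma/\langle g^n\rangle$ --- or a superconvex thickening thereof --- maps as a local isometry into $X$ satisfying the $C'(\lambda)$ cubical small-cancellation condition for any prescribed $\lambda$, because contraction forces pieces along $\gamma$ to have uniformly bounded length. The cubical small-cancellation theorem applied to the presentation $\langle X \mid c_n \rangle$ then produces a nonpositively curved cube complex $\bar X$ with $\pi_1\bar X = G/\nclose{g^n}$. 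The quotient is proper and nontrivial, and if $X$ is compact then $\bar X$ is compact, since it is formed from $X$ by attaching a single cone on a finite subcomplex.

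The central and most delicate task is to verify that $\bar X$ is again a non-product, which I expect to be the main obstacle. The strategy is to exhibit a surviving rank-one isometry in $\bar G$ acting essentially on $\widetilde{\bar X}$, and then to invoke the Caprace--Sageev dichotomy in reverse: an essential action with a rank-one element cannot preserve a nontrivial convex product subcomplex. One selects a rank-one element $h \in G$ whose axis $\delta$ lies in general position relative to the $G$-translates of $\gamma$ --- for instance a generic conjugate of $g$, or a second independent rank-one element produced by a ping-pong / Tits alternative argument applied to the rank-one action on $\widetilde X$. The small-cancellation hypothesis on $c_n$ then guarantees that $\delta$ persists in $\widetilde{\bar X}$ as a combinatorial contracting geodesic: any piece of $\delta$ running into a cone contributes only a short segment (by $C'(\lambda)$), and the remaining portions of $\delta$ in the $X$-part retain their original contraction in the quotient. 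A careful tracking of how axes deform through the cone-attachments --- the genuine technical content of the cubical small-cancellation framework developed in the body of the paper --- closes the argument, and simultaneously provides the flexibility needed for the remark immediately after the theorem, since one can arrange $g^n$ to lie outside the normal closure of any prescribed finite set $S$.
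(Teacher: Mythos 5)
The statement you are proving is a \emph{Conjecture} in the paper, not a theorem. The authors explicitly remark that it ``is out of reach with current technology in general'' and that their work resolves it \emph{only in the cubical setting} (which is what Theorem~\ref{thm: main intro cocompact} does). Your argument is inherently cubical --- it starts from an NPC cube complex $X$, attaches a cone, and invokes cubical small-cancellation. So at best it is a sketch of the cubical case, not of the general CAT(0) conjecture, which remains open.

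Even restricted to the cubical case, there are substantive gaps beyond what the hedges in your last paragraph acknowledge. First, the claim that ``contraction forces pieces along $\gamma$ to have uniformly bounded length'' only controls \emph{wall-pieces}. Cone-pieces --- overlaps of $\widetilde Y$ with its own $\pi_1X$-translates --- are not governed by contraction of the axis, and a rank-one element need not generate an (almost) malnormal cyclic subgroup. The paper addresses exactly this through the fiber-product $Y\otimes_X Y$ and passage to a malnormal free subgroup (Proposition~\ref{prop: no conj}, Lemma~\ref{lem:nonconjugate}), which is also why it works with rank~$2$ pseudographs rather than a single circle. Second, and more fundamentally, you conflate the coned-off presentation complex $X^*=\langle X\mid c_n\rangle$ with an NPC cube complex $\bar X$. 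The space $X^*$ contains cones and is not a cube complex at all; the cube complex $\bar X$ is obtained as $\bar G\backslash\mathcal C$, where $\mathcal C$ is the CAT(0) cube complex dual to a $B(6)$-wallspace structure on $\widetilde X^*$ (Section~\ref{sec: cubulating}). Producing that wallspace, verifying the $B(6)$ axioms, proving freeness of the $\bar G$-action on $\mathcal C$ (Section~\ref{sec:freeness}), and proving cocompactness (Section~\ref{sec:cocompact}, which needs relative hyperbolicity of wall stabilizers and the wall-triangle thinness of Proposition~\ref{prop: uniformly thin wall triangles}) are the real content; they do not come for free from the presentation being $C'(\lambda)$. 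In particular the assertion that ``if $X$ is compact then $\bar X$ is compact, since it is formed from $X$ by attaching a single cone'' is false as stated: compactness of $\bar X$ is Theorem~\ref{thm:cocompactness}, not a formal consequence of the presentation being finite. Your final step --- exhibiting a surviving rank-one element so $\bar X$ is again a non-product --- is the right goal, but the persistence of contraction through cone attachments is precisely the delicate part, which the paper handles via strong separation of walls and the auxiliary presentation $X^*_\Sigma$ (proof of Theorem~\ref{thm:main with all properties}.\eqref{conclusions:non-product}), not by directly tracking axes.
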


To prove Theorem~\ref{thm: main intro cocompact}, we work in the context of cubical small-cancellation theory. A cubical presentation $\langle X \mid \{Y_i \rightarrow X\} \rangle$ consists of   a nonpositively curved cube complex $X$ and a collection of local isometries $Y_i \rightarrow X$. This data determines a quotient $\pi_1X /\nclose{\pi_1Y_i}$ that arises as $\pi_1$ of the space $X^*$ obtained by coning-off each $Y_i$ in $X$. Cubical presentations  are natural generalizations of  group presentations, and are used to study  quotients of  cubulated groups, similar to how ordinary presentations are used to study quotients of free groups. 
\emph{Classical} small-cancellation-theory yields tractable quotients of free groups, and \emph{cubical} small-cancellation theory is the corresponding framework in the cubical setting. See Section~\ref{sec: cubical small cancellation} for the relevant definitions. 
Cubical presentations and cubical small-cancellation theory were introduced in~\cite{WiseBook}. They played an essential role in the proof of the
Malnormal Special Quotient Theorem, and  thus in
the proofs of the Virtual Haken and Virtual Fibering conjectures~\cite{AgolGrovesManning2012}. 
Subsequently, cubical small-cancellation theory has been further studied and utilized 
~\cite{Jankiewicz2017, ArzhantsevaHagen16, JankiewiczWise18, Arenas24IMRN, FuterWise, Arenas24asph, HuangWiseSpecial, ArenasAGT, ArenasJankiewiczWise}.

In the course of this investigation, we found \emph{pseudographs} to be a unifying theme. These are nonpositively curved cube complexes whose hyperplanes are contractible (see Definition~\ref{defn:pseudograph}). They arise naturally in the work of Caprace and Sageev on rank rigidity, and they are readily used as relators in a cubical small-cancellation quotient.
The flow of ideas in the text is then:\vspace{10pt}

\begin{center}
$X$ is a finite-dimensional non-product\\ 
$\Downarrow$\\
$X$ contains a rank~$2$ pseudograph \\
$\Downarrow$\\
$X$ has a rich family of cubical small-cancellation quotients.\\    
\end{center}

\vspace{10pt}
In view of the above, Theorem~\ref{thm: main intro cocompact} is a consequence of the following.

\begin{thm}\label{thm:main}[Theorem~\ref{thm: main without cubulation} and Theorem~\ref{thm:main with all properties}]
    Let $X$ be a
    nonpositively curved cube complex that admits a local isometry of a rank~$2$ superconvex pseudograph. 
    For every $\alpha \leq \frac 1{16}$ 
    there exists a pseudograph $Y\looparrowright X$ with $\pi_1 Y\neq 1$ such that $\langle X\mid Y\rangle$ is a cubical $C'(\alpha)$ small-cancellation presentation.    Moreover, we can choose $Y$ so that $\pi_1 X / \nclose{ \pi_1 Y}$ acts freely on a CAT(0) cube complex $\mathcal{C}$.

Furthermore, if $X$ is  compact, then
  $Y$ can be chosen so that $\pi_1 X / \nclose{\pi_1 Y}$   acts properly and cocompactly on   $\mathcal{C}$.
\end{thm}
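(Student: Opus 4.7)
The plan is to take the rank~$2$ superconvex pseudograph $Z\looparrowright X$ given by hypothesis as a template, and construct $Y$ as a sufficiently ``long'' pseudograph built from the geometry of $Z$. Since being a pseudograph requires only that hyperplanes be contractible, and rank~$2$ means $Z$ has two independent directions of flats/trees in its universal cover, there is ample freedom to produce a family of pseudographs $\{Y_n\}$ locally isometric to $X$ whose systoles grow without bound while each $Y_n$ still admits a superconvex local isometry to $X$ (superconvexity should survive passage to long sub-pseudographs and to covers of $Z$, which I would verify using that superconvexity is a local-to-global property along hyperplane carriers). Because rank~$2$ forces $\pi_1Z\neq 1$, the construction is easily arranged to satisfy $\pi_1Y_n\neq 1$ for all large $n$.

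The heart of the argument is arranging the $C'(\alpha)$ condition. Recall that in cubical small-cancellation, pieces are subcomplexes that factor through two essentially distinct lifts to the universal cover of a cone, or through lifts to two different cones; one needs the diameter (or an analogous combinatorial measure) of every piece to be at most $\alpha\cdot\sys(Y)$. The strategy is to observe that pieces in $\langle X\mid Y_n\rangle$ are controlled by the finitely many ``overlap types'' already present in $Z\looparrowright X$ together with the self-intersection pattern of $Y_n$ in $X$. Using the rank~$2$ hypothesis, one direction of the pseudograph can be stretched to make $\sys(Y_n)$ grow linearly in $n$, while the pieces stay bounded in size, so the ratio $\textup{piece}/\sys$ tends to~$0$. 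Picking $n$ large then guarantees $C'(\alpha)$ for any given $\alpha\leq \frac{1}{16}$. This balancing step — simultaneously preserving the pseudograph property, superconvexity, nontriviality of $\pi_1$, and shrinking pieces relative to the systole — is the main obstacle I foresee.

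For the cubulation, I would invoke the wall-space machinery for cubical small-cancellation quotients developed in~\cite{WiseBook}: once $\langle X\mid Y\rangle$ is $C'(\frac{1}{16})$, the hyperplanes of $X$ together with the ``cone-hyperplanes'' in each copy of $Y$ in $X^*$ define a $\bar G$-invariant wall structure, and the small-cancellation hypothesis ensures that the walls embed two-sidedly and separate properly. Sageev's construction then produces a CAT(0) cube complex $\mathcal C$ on which $\bar G=\pi_1X/\nclose{\pi_1Y}$ acts. Freeness follows because $C'(\frac{1}{16})$ is well known to imply that stabilizers of the new cone-points are trivial in $\bar G$ and no hyperplane of $\mathcal C$ is fixed.

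For the cocompact case, assume $X$ is compact. I would choose $Y$ to be itself compact, so that cones in $X^*$ come in finitely many $\bar G$-orbits; combined with compactness of $X$, this yields finitely many $\bar G$-orbits of walls. Standard arguments (as in~\cite{WiseBook}) then give cocompactness of the $\bar G$-action on $\mathcal C$. The delicate step here is again in Step~2: ensuring that the \emph{compact} $Y$ can still be arranged with large enough systole relative to pieces. Compactness of $X$ together with the fact that $Z$ itself can be chosen with compact domain (via a finite cover argument, if necessary) should make this work.
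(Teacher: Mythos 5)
Your overall plan coincides with the paper's, but there are genuine gaps, starting with the step you yourself flag as the main obstacle. Wall-pieces are uniformly bounded by superconvexity (Lemma~\ref{lem:bounded wall pieces}), but cone-pieces --- overlaps $g\widetilde Y\cap\widetilde Y$ between distinct lifts in $\widetilde X$ --- do not automatically stay bounded as you lengthen $Y$ within $Z$. Cone-pieces are governed by the fiber-product $Y\otimes_X Y$; if $\pi_1 Z$ is not malnormal in $\pi_1 X$, then $Z\otimes_X Z$ has a non-contractible non-diagonal component, and its elevations through $Y\to Z$ produce cone-pieces whose length grows \emph{with} the systole, so the ratio $\mathrm{piece}/\mathrm{sys}$ never improves no matter how far you stretch. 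The paper therefore first passes to a sub-pseudograph $W\to Z$ whose fundamental group is malnormal in $\pi_1 X$ (Proposition~\ref{prop: no conj}, via the Stallings-graph argument of Lemma~\ref{lem:nonconjugate}), which forces every non-diagonal component of $W\otimes_X W$ to be contractible, hence uniformly bounded, and only then runs the classical-to-cubical small-cancellation transfer of Proposition~\ref{prop:small-cancellation_correspondence}. Your proposal has no substitute for this step, and without it the balancing act you describe fails.

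Cocompactness is also not a ``standard argument.'' Sageev's cocompactness criterion needs a hyperbolic ambient group with quasiconvex wall-stabilizers, neither of which holds here: $\pi_1 X$ need not be hyperbolic, and a wall is a union of hyperplanes threaded through cones, so its stabilizer can be a large infinite group. The paper proves a uniform-thinness theorem for wall-triangles in $B(6)$ presentations (Proposition~\ref{prop: uniformly thin wall triangles}), shows each wall-stabilizer is hyperbolic relative to its hyperplane stabilizers (Lemma~\ref{lem: wall stabilizer relatively hyperbolic}), and combines these with the Hruska--Wise relative cocompactness theorem (Proposition~\ref{prop:RelHyp relative Cocompactness}); this is the bulk of the paper's technical work and cannot be dispatched by ``finitely many orbits of walls.'' Relatedly, freeness is not a formal consequence of $C'(1/16)$: it requires the explicit two-rank-$1$-pseudograph wallspace of Construction~\ref{const: walspace rank 2}, the torsion-stabilizer criterion of Theorem~\ref{thm:proper} (whose hypotheses --- pieceful convexity, existence of non-proximate separating walls --- must actually be verified), and the triviality of $\Aut_X(Y)$.
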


\subsection*{Organization}

In Section~\ref{sec: cubes} we discuss various results and notions  related to rank rigidity, and use them to give a characterization of non-product nonpositively curved cube complexes. 
In Section~\ref{sec:pseudograph} we introduce pseudographs, and explain the relation between pseudographs and non-products.
In Section~\ref{sec: cubical small cancellation} we review the  cubical small-cancellation terminology. 
In Section~\ref{sec: small cancellation via pseudograph} we explain how to use pseudographs to produce cubical small-cancellation quotients of non-products. 
In Section~\ref{sec: disc diagrams} we review disc diagrams, state the cubical versions of Greendlinger's Lemma and the Ladder Theorem, and describe the  more sophisticated $B(6)$ cubical small-cancellation condition and related results that lead to cubulation. 
In Section~\ref{sec: wall traingles} we prove that certain disc diagrams arising from collections of intersecting walls in a $B(6)$ presentation are uniformly thin. 
In Section~\ref{sec: cubulating} we begin the proof of Theorem~\ref{thm:main}. Namely, $\pi_1$ of a suitably chosen $B(6)$ cubical presentation $X^*$ acts on the non-product CAT(0) cube complex dual to the $B(6)$ wallspace structure on $\widetilde X^*$.  
In Section~\ref{sec:freeness} we show that $X^*$  can moreover be chosen so that the action is free. 
In Section~\ref{sec:cocompact} we complete the proof of Theorem~\ref{thm:main} by verifying cocompactness of the action on the dual cube complex obtained in Section~\ref{sec: cubulating}.

\subsection*{Related work}
It is known that for any non-product $X$, there exists a nontrivial quotient of $\pi_1 X$, as $\pi_1 X$ is acylindrically hyperbolic. Indeed,
Caprace and Sageev~\cite{CapraceSageev2011} showed that every group acting properly and cocompactly on a geodesically complete non-product CAT(0) cube complex contains a rank~one isometry. Osin showed every non virtually cyclic group acting properly on a proper CAT(0) space and containing a rank one isometry is acylindrically hyperbolic \cite{Osin2016}. 
In particular, the action of $\pi_1 X$ on its contact graph, which is hyperbolic, is acylindrical  when $X$ admits a factor system \cite{BehrstockHagenSisto}. Acylindrically hyperbolic groups have many proper quotients, e.g.\ they are SQ-universal \cite{DahmaniGuirardelOsin2017}. There exists a version of small-cancellation theory for acylindrical hyperbolic groups which also produces nontrivial quotients \cite{Hull2016}.
Acylindrical hyperbolicity has been applied to cube complexes to show that the growth-rate of $\pi_1X$ is strictly larger than the growth-rate of its hyperplane stabilizers \cite{DahmaniFuterWise}.
However, none of these results provide cubical geometry for the quotient group.

\subsection*{Acknowledgements}
The first author was supported by the Denman Baynes Research Fellowship at Clare College, Cambridge.
The second author was supported by the National Science Foundation
under Grants No. DMS-1926686 and DMS-2238198. The third author was supported by NSERC.

\section{CAT(0) cube complexes}\label{sec: cubes}
We  assume familiarity with the basic background on CAT(0) cube complexes, for which the reader can consult~\cite{GGTbook14,  WiseCBMS2012, WiseBook}, for instance.

\subsection{Halfspaces}
A \emph{hyperplane} $U$ in a CAT(0) cube complex $\widetilde X$ is a non-empty connected subspace such that $U\cap c$ is a midcube of $c$ for each cube $c$ of $\widetilde X$. Its two associated \emph{halfspaces} are the closures of the components of  $\widetilde X-U$ and are denoted $U^+, U^-$. While halfspaces are not subcomplexes of $\widetilde X$, we can define associated subcomplexes as follows. 
The \emph{carrier} $N(U)$ of a hyperplane $U$ is the convex subcomplex consisting of the union of all cubes intersecting $U$.
A \emph{minor halfspace} is the closure of a component of the complement of the interior of $N(U)$. A \emph{major halfspace} is the closure of the complement of a minor halfspace.

In the next few sections we describe some background, mostly from Caprace-Sageev \cite{CapraceSageev2011}.

\subsection{Properties of actions}
Let $G$ act on a CAT(0) cube complex $\widetilde X$. A hyperplane $U\subseteq \widetilde X$ is $G$-essential for any (equivalently every) basepoint $v\in \widetilde X$, each halfspace of $U$ contains points of $Gv$ lying arbitrarily far from $U$. 
The action of $G$ on $\widetilde X$ is \emph{essential} if each hyperplane of $\widetilde X$ is $G$-essential. 
The \emph{$G$-essential core} of $\widetilde X$ is the cube complex dual to all the $G$-essential hyperplanes of $\widetilde X$. The action of $G$ on its $G$-essential core is essential. 
When $G$ either acts on $\widetilde X$ cocompactly or without fixed point at infinity, then the $G$-essential core of $\widetilde X$ is unbounded if and only if $G$ has no fixed points in $\widetilde X$. In that case the $G$-essential core embeds as a convex $G$-invariant subcomplex of the cubical subdivision of $\widetilde X$ \cite[Prop~3.5]{CapraceSageev2011}. Thus, in the setting of Theorem~\ref{thm: main intro cocompact}, up to possibly passing from $\widetilde X$ to its $\pi_1X$-essential core, we can assume that the action of $\pi_1X$ on $\widetilde X$ is essential. Indeed, if $X$ is a non-product then so is the $\pi_1X$-essential core of $\widetilde X$. It is worth noting that when $\pi_1X$ is not trivial or cyclic, the $\pi_1 X$-essential core is a non-product if and only if it is not a direct product of unbounded cube complexes.

 A group $G$ acts on a CAT(0) space $\widetilde X$ \emph{without a fixed point at infinity}, if there does not exist $\xi \in \partial X$ where $\partial X$ is the visual boundary of $X$, with $g\xi =\xi$ for all $g\in G$.
By \cite[Thm~1.9 and Clm~4.9]{Genevois25}, if $G$ acts on a finite dimensional CAT(0) cube complex $\widetilde X$, and for every finite index subgroup $H\subseteq G$ the commutator subgroup $[H,H]$ is infinite, then $G$ acts on $\widetilde X$ without a fixed point at infinity.

\begin{remark}\label{rmk: no fixed point}
    When $X$ is a finite-dimensional non-product, then $G=\pi_1X$ satisfies those assumptions. Indeed, otherwise $G$ is virtually finite-by-abelian, so in particular virtually abelian, but this contradicts the assumption that $G$ is the $\pi_1$ of a non-product.
\end{remark}

By Remark~\ref{rmk: no fixed point}, in the setting of Theorem~\ref{thm: main intro cocompact} the action of $\pi_1X$ on $\widetilde X$ has no fixed point at infinity. Consequently, we can apply Proposition~\ref{prop:the core} to deduce Theorem~\ref{thm: main intro cocompact} from Theorem~\ref{thm:main}.

\subsection{Characterization of non-products}

Two hyperplanes $U_1, U_2$ are \emph{strongly separated} if there does not exist a hyperplane $V$ which intersects both $U_1, U_2$.

\begin{prop}[{\cite[Prop~5.1]{CapraceSageev2011}}]\label{prop: non-product characterization}
    Let $\widetilde X$ be a finite-dimensional unbounded CAT(0) cube complex such that $\Aut(\widetilde X)$ acts essentially without a fixed point at infinity. Then the following conditions are equivalent:
    \begin{enumerate}
        \item $\widetilde X$ is a non-product.
        \item There is a pair of strongly separated hyperplanes.
        \item For each halfspace $U^+$ there is a pair of halfspaces $U_1,U_2$ such that $U_1^+\subseteq U^+\subseteq U_2^+$ and the hyperplanes $U_1, U_2$ are strongly separated.
    \end{enumerate}
\end{prop}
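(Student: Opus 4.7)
The plan is to close the triangle of implications $(3)\Rightarrow(2)$, $(2)\Rightarrow(3)$, $(2)\Rightarrow(1)$, and $(1)\Rightarrow(2)$, with the substantive content concentrated in $(1)\Rightarrow(2)$; the remaining implications follow standard Caprace-Sageev techniques. The direction $(3)\Rightarrow(2)$ is immediate: apply (3) to any halfspace $U^+$. For $(2)\Rightarrow(3)$, given a strongly separated pair $V_1, V_2$ and a target halfspace $U^+$, I would invoke the \emph{double-skewering lemma} of Caprace-Sageev, whose hypotheses hold because $\Aut(\widetilde X)$ acts essentially without a fixed point at infinity. This yields a hyperbolic automorphism $g$ skewering $U$, so that a sufficiently high power $g^n$ translates $\{V_1, V_2\}$ deeply into $U^+$ while $g^{-n}$ translates a copy deeply into the opposite halfspace, producing the nested strongly separated pairs required by (3).

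For $(2)\Rightarrow(1)$, I would argue the contrapositive. Suppose $\widetilde X$ contains a nontrivial $\Aut(\widetilde X)$-invariant convex product subcomplex $Y = A \times B$ with $A, B$ unbounded. Every hyperplane of $Y$ is either ``horizontal'' (of the form $A\times W$) or ``vertical'' (of the form $W'\times B$). Two horizontals are jointly crossed by every vertical and vice versa, while any horizontal and vertical pair cross; since both factors are unbounded, $Y$ contains no strongly separated pair. By essentiality, every hyperplane of $\widetilde X$ meets $Y$: picking any $y\in Y$, the $\Aut(\widetilde X)$-invariance of $Y$ together with essentiality places $\Aut(\widetilde X)$-translates of $y$ arbitrarily far from any given hyperplane in both of its halfspaces, forcing $Y$ to cross it. Convexity of $Y$ then shows that a strongly separated pair in $\widetilde X$ restricts to a strongly separated pair in $Y$, contradicting the preceding sentence.

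The main content is $(1)\Rightarrow(2)$, which I expect to be the central difficulty. The plan is again contrapositive: assuming every pair of hyperplanes of $\widetilde X$ admits a common transversal, produce an $\Aut(\widetilde X)$-invariant convex product subcomplex. Following the strategy of Caprace-Sageev, I would define a suitable equivalence relation on the hyperplane set of $\widetilde X$ that is sensitive to parallel classes --- declaring $U\approx V$ precisely when no strongly-separated-type obstruction can be inserted between them --- and show that the resulting classes partition into at least two $\Aut(\widetilde X)$-invariant families. Dualizing each class separately yields candidate factors, and reassembly produces an invariant convex subcomplex of $\widetilde X$ isomorphic to their product.

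The hardest step is verifying that this decomposition is genuinely nontrivial: at least two distinct classes appear, and each dualizes to an unbounded cube complex. This uses essentiality crucially (to guarantee that the factors are unbounded rather than collapsing to points) together with the no-fixed-point-at-infinity hypothesis, invoked through the double-skewering lemma, to control the dynamics of $\Aut(\widetilde X)$ on the hyperplane set and to rule out degenerate decompositions. I anticipate the bulk of the technical work to lie in showing that the equivalence relation is well-behaved under the action and that the reassembly step produces a bona fide convex embedding of the product into $\widetilde X$.
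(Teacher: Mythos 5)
The paper does not prove this proposition; it cites it directly as \cite[Prop~5.1]{CapraceSageev2011}, so there is no internal argument to compare against. Your sketch is therefore being measured against Caprace--Sageev's original proof.

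Your treatments of $(3)\Rightarrow(2)$ and $(2)\Rightarrow(1)$ are fine. One small point on $(2)\Rightarrow(1)$: the notion of ``$\widetilde X$ is a non-product'' in this proposition should be read in the Caprace--Sageev sense, namely that $\widetilde X$ is not itself isomorphic to a product of two unbounded cube complexes; the paper's definition of ``non-product'' in the introduction (involving a $\pi_1X$-invariant convex subcomplex) is phrased for quotient complexes $X$, not for $\widetilde X$. Your contrapositive argument is actually over-general --- for the correct reading one just takes $Y = \widetilde X$, and then the observation that a product of unbounded factors has no strongly separated pair of hyperplanes already finishes.

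The $(2)\Rightarrow(3)$ sketch is loose in a way that matters. Translating the pair $\{V_1,V_2\}$ ``deeply into $U^+$'' by $g^n$ and ``deeply into $U^-$'' by $g^{-n}$ produces two strongly separated pairs, but the halfspace nesting $U_1^+\subseteq U^+\subseteq U_2^+$ requires taking $U_1$ from one translate and $U_2$ from the other, and it is not automatic that $U_1$ and $U_2$ are strongly separated --- strong separation is not preserved under forming such mixed pairs. The actual mechanism is that double skewering of the nested halfspaces $V_1^+\subsetneq V_2^+$ produces a $g$-invariant chain $\cdots\subsetneq V_1^+\subsetneq V_2^+\subsetneq gV_1^+\subsetneq gV_2^+\subsetneq\cdots$ in which any two non-adjacent entries are strongly separated (because a hyperplane crossing both would have to cross the strongly separated pair sitting between them). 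One then moves this chain to straddle $U$. Your sketch omits the argument that the chain can be made to cross $U$ at all, and omits the propagation of strong separation along the chain.

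The serious gap is in $(1)\Rightarrow(2)$, which is the substance of the proposition. You propose the contrapositive: assume no strongly separated pair and build a product decomposition, by ``declaring $U\approx V$ precisely when no strongly-separated-type obstruction can be inserted between them'' and showing this partitions the hyperplanes into at least two invariant families. But under the hypothesis that no strongly separated pair exists at all, the relation you describe is trivially full --- every pair of hyperplanes is related --- so it cannot yield two or more classes, let alone a product decomposition. A product decomposition of a CAT(0) cube complex corresponds to a partition of the hyperplanes into classes in which any two cross-class hyperplanes intersect; the absence of strongly separated pairs says something weaker, namely that any two hyperplanes either intersect or are crossed by a common third hyperplane, and promoting that to the required complete-multipartite crossing structure is exactly the hard content of Caprace--Sageev's argument. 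Their proof uses essentiality, the Flipping and Double Skewering Lemmas, and careful control of the dynamics on halfspaces; it is not a formal equivalence-relation dualization. As written, your step $(1)\Rightarrow(2)$ is not a proof outline so much as a restatement of the goal, and the specific relation you propose points in the wrong direction.
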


\begin{lem}[Flipping Lemma {\cite[Thm~4.1]{CapraceSageev2011}}] \label{lem: flipping lemma}
      Let $\widetilde X$ be a finite-dimensional CAT(0) cube complex, and let $G\subseteq \Aut(\widetilde X)$ act essentially, without a fixed point at infinity. For each halfspace $U^+$, there exists $\gamma\in G$ with $U^-\subsetneq \gamma U^+$.
\end{lem}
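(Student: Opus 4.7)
The plan is to argue by contradiction: suppose there is a halfspace $U^+$ for which no $\gamma\in G$ satisfies $U^-\subsetneq \gamma U^+$. The goal is to manufacture a $G$-fixed point in $\partial\widetilde X$, contradicting the no-fixed-point-at-infinity hypothesis.

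First I would invoke essentiality to produce $g\in G$ such that $gU$ is disjoint from $U$ and lies in the interior of $U^+$. Pick a vertex $v$ on $N(U)$; by essentiality the orbit $Gv$ contains points arbitrarily far inside $U^+$, and choosing $gv$ deep enough (using finite-dimensionality of $\widetilde X$ to control the carrier) forces $gU\subset U^+$ with $gU\cap U=\emptyset$. Since $U^-$ is then entirely on one side of $gU$, either $U^-\subset gU^+$, in which case the inclusion is strict because $gU\neq U$ and we take $\gamma=g$, or $U^-\subsetneq gU^-$, equivalently $gU^+\subsetneq U^+$. The standing assumption forces the latter. Iterating gives a strictly descending chain
\[
U^+\supsetneq gU^+\supsetneq g^2U^+\supsetneq\cdots,
\]
whose defining hyperplanes $g^nU$ recede to infinity inside $U^+$. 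The basepoints $g^n v$ then converge in the visual compactification to a point $\xi\in\partial\widetilde X$ fixed by $g$.

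The main obstacle is upgrading the $g$-fixed $\xi$ to a $G$-fixed point. The key idea is that the failure of flipping for $U^+$ is rigid enough to constrain the entire orbit $\{hU^+:h\in G\}$: for any $h\in G$ the halfspace $hU^+$ cannot be transverse to $U^+$ nor strictly contained in $U^-$, for in either case one could combine $h$ with elements like $g$ (which pushes the carrier deep into $U^+$) to manufacture some $\gamma$ with $U^-\subsetneq\gamma U^+$, contradicting our assumption. Ruling these cases out leaves only nestings $hU^+\subseteq U^+$ or $hU^+\supseteq U^+$, so the $G$-orbit of $U^+$ is totally ordered in a coherent direction and every $h\in G$ must preserve the direction of the descending chain. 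That gives $h\xi=\xi$ for every $h\in G$, the desired contradiction. The delicate step is the transverse case in the case analysis, where one must exhibit a composition of $h$ with a deep translate that witnesses the flipping; I expect that to be the principal technical obstacle.
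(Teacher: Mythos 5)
The paper does not prove this lemma; it quotes it as Theorem~4.1 of Caprace--Sageev, so there is no in-paper proof to compare against. Judged on its own, your sketch has the right overall shape (argue by contrapositive and extract a $G$-fixed point at infinity from the failure of flipping), but it has two real gaps. The first is the opening move: essentiality supplies orbit points $gv$ lying arbitrarily far inside $U^+$, but it does not supply translates $gU$ of the \emph{hyperplane} that are disjoint from $U$ and contained in $U^+$. A deep orbit point $gv\in N(gU)$ is perfectly compatible with $gU$ swinging back and crossing $U$. Concretely, in the standard cubulation of $\Euclidean^2$, compose a long horizontal translation with a quarter-turn about the new basepoint: the basepoint lands far to the right of a vertical hyperplane while the hyperplane itself is carried to a horizontal one crossing it. Finite-dimensionality bounds the number of pairwise-crossing hyperplanes, not the diameter of a carrier, so ``using finite-dimensionality to control the carrier'' does not close this. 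Producing even a single disjoint translate of $U$ inside $U^+$ already needs a genuine argument using the no-fixed-point hypothesis, not just essentiality.

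The second gap is the one you flag yourself: ruling out $hU^+$ transverse to $U^+$ and $hU^+\subsetneq U^-$ is not a cleanup step, it is the content of the theorem, and it is exactly where the Caprace--Sageev argument does its work (roughly, they analyse a $G$-invariant family of halfspaces attached to an unflippable one and extract an invariant ultrafilter-type end from the assumption that flipping fails). Writing ``one could combine $h$ with elements like $g$ to manufacture some $\gamma$'' without any indication of how leaves the proposal at the level of naming the right target rather than reaching it. Finally, even granting the earlier steps, the conclusion that a nested orbit forces $h\xi=\xi$ for all $h\in G$ is not automatic: you must also rule out elements that reverse the nesting (for instance a reflection interchanging $U^+$ and $U^-$), which again requires input you have not supplied.
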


\begin{lem}[Double Skewering Lemma {\cite{CapraceSageev2011}}] \label{lem: double skewering lemma}
      Let $\widetilde X$ be a finite-dimensional CAT(0) cube complex and $G \subseteq \Aut(\widetilde X)$ be a group acting essentially without a fixed point at infinity. Then for any two half-spaces $U_1^+\subsetneq U_2^+$, there exists $g\in G$ such that $U_2^+\subsetneq g U_1^+$.
\end{lem}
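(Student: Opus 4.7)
The plan is to derive the Double Skewering Lemma directly from two applications of the Flipping Lemma (the preceding Lemma~\ref{lem: flipping lemma} in the excerpt), together with a chaining argument that combines the resulting inclusions via group translates.

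First, I would rewrite the hypothesis $U_1^+\subsetneq U_2^+$ in its complementary form $U_2^-\subsetneq U_1^-$. I would then apply the Flipping Lemma to the halfspace $U_2^-$ (treating it as the ``$U^+$'' in that statement) to produce an element $\gamma_2\in G$ satisfying $U_2^+\subsetneq \gamma_2 U_2^-$. Translating the inclusion $U_2^-\subsetneq U_1^-$ by $\gamma_2$ and concatenating gives the chain
\[
U_2^+\ \subsetneq\ \gamma_2 U_2^-\ \subsetneq\ \gamma_2 U_1^-.
\]
The role of this first flip is thus to produce a translate of $U_1^-$ that strictly engulfs $U_2^+$; the disadvantage is that we have landed on the wrong side of the hyperplane $U_1$, so a second flip is needed to convert $U_1^-$ to $U_1^+$.

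Next, I would apply the Flipping Lemma a second time, this time to the halfspace $U_1^+$, obtaining $\gamma_1\in G$ with $U_1^-\subsetneq \gamma_1 U_1^+$. Translating by $\gamma_2$ and inserting into the previous chain yields
\[
U_2^+\ \subsetneq\ \gamma_2 U_1^-\ \subsetneq\ \gamma_2\gamma_1\, U_1^+,
\]
so setting $g=\gamma_2\gamma_1$ completes the argument.

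I do not expect a serious obstacle: once the correct two halfspaces are chosen for the Flipping Lemma, everything is bookkeeping about inclusions and their behavior under the $G$-action. The only minor subtlety is keeping the orientations straight when passing from $U_i^+$ to $U_i^-$ and back, and noting that the hypotheses of the Flipping Lemma (essential action and no fixed point at infinity) are inherited from the present hypotheses, so the two applications are legitimate.
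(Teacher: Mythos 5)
Your argument is correct. The paper does not prove this lemma; it is cited from Caprace--Sageev~\cite{CapraceSageev2011}, so there is no in-paper proof to compare against. Your two-flip derivation from Lemma~\ref{lem: flipping lemma} is clean and valid: applying the Flipping Lemma to $U_2^-$ gives $\gamma_2$ with $U_2^+ \subsetneq \gamma_2 U_2^-$, translating the complementary form $U_2^- \subsetneq U_1^-$ of the hypothesis by $\gamma_2$ gives $\gamma_2 U_2^- \subsetneq \gamma_2 U_1^-$, and a second flip applied to $U_1^+$ (then translated by $\gamma_2$) converts $\gamma_2 U_1^-$ into $\gamma_2 \gamma_1 U_1^+$, yielding $U_2^+ \subsetneq gU_1^+$ with $g = \gamma_2\gamma_1$. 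This is essentially the standard derivation of Double Skewering from Flipping, and the hypotheses needed for the Flipping Lemma (essential action, no fixed point at infinity) are exactly those assumed here, so both applications are legitimate.
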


A hyperplane $U$ \emph{separates} sets $A,B\subseteq \widetilde X$ if $A\subseteq U^+$ and $B\subseteq U^-$, or vice-versa.
A \emph{facing triple of hyperplanes} is a collection of three disjoint hyperplanes $U_1, U_2, U_3$ such that no $U_i$ separates the other two hyperplanes. In particular, we can always choose the halfspaces $U_1^-, U_2^-, U_3^-$ of these hyperplanes so that the intersection $U_1^-\cap U_2^-\cap U_3^-$ is non-empty. 
A \emph{wedge} corresponding to a facing triple $U_1, U_2, U_3$ in $\widetilde X$ is the intersection of the associated minor halfspaces of a facing triple, i.e.\ a maximal subcomplex contained $U_1^-\cap U_2^-\cap U_3^-$.

\begin{thm}[{\cite[Thm~7.2]{CapraceSageev2011}}] \label{thm: no facing triple} 
    Let $\widetilde X$ be a finite-dimensional CAT(0) cube complex such that $\Aut(\widetilde X)$ acts essentially and satisfies at least one of the following conditions:
    \begin{enumerate}
        \item $\Aut(\widetilde X)$ has no fixed points at infinity.
        \item $\Aut(\widetilde X)$ acts cocompactly and $X$ is locally compact.
    \end{enumerate}
    Then $\Aut(\widetilde X)$ stabilizes some Euclidean flat if and only if there is no facing triple of hyperplanes in $\widetilde X$.
\end{thm}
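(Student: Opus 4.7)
The plan is to prove the two implications separately.

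The forward direction---stabilization of a flat implies no facing triple---is the easier one. I would argue that if $F \subseteq \widetilde X$ is an $\Aut(\widetilde X)$-invariant Euclidean flat, then essentiality forces every hyperplane that meets $F$ to cross it transversely, and a Flipping Lemma argument (Lemma~\ref{lem: flipping lemma}) rules out hyperplanes disjoint from $F$ whose orbit stays on one side. The traces on $F$ of a putative facing triple $U_1,U_2,U_3$ would then be three pairwise disjoint affine hyperplanes in Euclidean space, no one separating the other two. But among any three disjoint affine hyperplanes in $\reals^n$, one always lies between the other two, giving a contradiction.

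For the converse direction I would proceed in three steps. First, using the absence of facing triples I would extract a \emph{linearity} property: between any two disjoint hyperplanes $U_1,U_2$ of $\widetilde X$ there exists a third hyperplane $V$ separating them (otherwise, applying the Flipping Lemma inside the intersection $U_1^-\cap U_2^-$ would manufacture a facing triple). This gives a genuine linear order on every chain of pairwise disjoint hyperplanes. Second, I would invoke the Double Skewering Lemma (Lemma~\ref{lem: double skewering lemma}) to produce, for each such chain, an element $g\in\Aut(\widetilde X)$ acting as a translation along a combinatorial axis, yielding a rank-one direction. Third, I would collect a maximal family of pairwise-commensurable such axes whose translation directions generate a free abelian subgroup $A\subseteq\Aut(\widetilde X)$ acting cocompactly on a convex subcomplex $\widetilde X_0 \subseteq \widetilde X$ quasi-isometric to some $\reals^n$. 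The flat torus theorem then produces a Euclidean flat $F\subseteq\widetilde X_0$ preserved by $A$.

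The main obstacle is the last step: upgrading $A$-invariance to full $\Aut(\widetilde X)$-invariance of the flat $F$. Here the alternative hypotheses (1) and (2) become essential. Under (1), I would argue that the family of maximal flats constructed above is itself $\Aut(\widetilde X)$-invariant and canonically determined by the hyperplane combinatorics, and any nontrivial permutation of these flats by $\Aut(\widetilde X)$ would produce a fixed point at infinity (the barycenter at infinity of the orbit of one flat), contradicting the hypothesis. Under (2), cocompactness together with local compactness forces only finitely many orbits of flats through a basepoint, and the linearity property combined with convex-cocompact rigidity identifies a canonical $\Aut(\widetilde X)$-invariant choice. I expect this uniqueness/canonicity step---rather than the construction of \emph{some} flat---to be the technical heart of the argument.
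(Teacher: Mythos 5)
The paper does not prove this theorem: it is quoted from Caprace--Sageev~\cite{CapraceSageev2011} (their Theorem~7.2) and used as a black box, so there is no in-paper argument to compare your proposal against. What follows is an assessment of the sketch on its own terms.

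The forward implication is plausible in outline, but the reduction to ``three pairwise-disjoint affine hyperplanes in $\reals^n$'' presumes that each hyperplane of a putative facing triple actually meets the flat $F$ and does so in a codimension-one affine subspace. Essentiality alone does not immediately rule out hyperplanes disjoint from $F$, nor hyperplanes whose intersection with $F$ is lower-dimensional; the Flipping Lemma step you gesture at needs to be carried out explicitly.

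The converse has a genuine gap in its opening step. The absence of facing triples does \emph{not} imply your ``linearity'' property that between any two disjoint hyperplanes there is a third separating them. In the standard cubulation of $\reals^2$ by unit squares with $\integers^2$ acting (an essential action with no fixed point at infinity), two adjacent parallel hyperplanes are disjoint, there is no hyperplane separating them, and there is no facing triple anywhere in the complex---so the claimed dichotomy ``separating wall or facing triple'' already fails in the model case the theorem is about. The Flipping Lemma inside $U_1^-\cap U_2^-$ cannot repair this. Caprace--Sageev's actual argument does not pass through such a claim: roughly, after reducing to the essential core and its decomposition into irreducible factors, absence of facing triples forces each irreducible factor to have a linearly ordered pocset of hyperplanes and hence to be combinatorially a line, so the essential core itself is a finite product of lines, i.e.\ a Euclidean flat, and its $\Aut(\widetilde X)$-invariance is automatic. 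That route also dissolves the ``uniqueness/canonicity'' concern you flag at the end: the flat is not one of many flats chosen inside a larger complex whose invariance must be argued separately, but is (essentially) the whole essential core.
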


\begin{prop}\label{prop: strongly separated facing triple}
     Let $\widetilde X$ be a finite-dimensional CAT(0) cube complex, and let $G\subseteq \Aut(\widetilde X)$ act essentially, without a fixed point at infinity.
     If $\widetilde X$ is a non-product, then there exists a facing triple of pairwise strongly separated hyperplanes in $\widetilde X$.
\end{prop}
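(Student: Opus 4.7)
The plan is to first produce a facing triple via Theorem~\ref{thm: no facing triple} and then upgrade its members to be pairwise strongly separated using Proposition~\ref{prop: non-product characterization}(3).

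For the first step, note that since $G \subseteq \Aut(\widetilde X)$ acts essentially without a fixed point at infinity, so does $\Aut(\widetilde X)$ itself: any fixed point at infinity for the bigger group would be fixed by $G$, and $G$-essentiality of each hyperplane implies $\Aut(\widetilde X)$-essentiality. I then verify that $\Aut(\widetilde X)$ cannot stabilize any Euclidean flat: Proposition~\ref{prop: non-product characterization} supplies a strongly separated pair $U_1, U_2$, which is incompatible with a stabilized flat of dimension at least two (in such a flat, any two parallel hyperplanes are crossed by transverse ones), while the one-dimensional case is precluded by the non-product hypothesis together with Remark~\ref{rmk: no fixed point} and the essential-core discussion. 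Theorem~\ref{thm: no facing triple} then supplies a facing triple $V_1, V_2, V_3$ in $\widetilde X$.

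The upgrade rests on a transitivity principle: if $A, B$ are strongly separated and $B$ separates $A$ from $C$, then $A, C$ are strongly separated, because any hyperplane crossing both $A$ and $C$ would, by connectedness, have to cross $B$ as well, contradicting the strong separation of $A, B$. For each $i$, I apply Proposition~\ref{prop: non-product characterization}(3) to the major halfspace $V_i^+$ to obtain a strongly separated pair $W_i, W_i'$ with $W_i^+ \subseteq V_i^+ \subseteq W_i'^+$. A short verification using the disjointness of $V_i, V_j$ in the facing triple shows that the major halfspaces are pairwise disjoint, namely $V_j^+ \subseteq V_i^-$ for $i \neq j$, so $V_i$ separates $W_i \subseteq V_i^+$ from each $W_j \subseteq V_j^+$. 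By iterating Proposition~\ref{prop: non-product characterization}(3), or by translating a strongly separated pair deeply into $V_i^+$ via Lemma~\ref{lem: double skewering lemma}, one can arrange that $V_i$ itself is strongly separated from $W_i$; the transitivity principle then upgrades this through the separator $V_i$ to pairwise strong separation of the $W_i$. The triple $W_1, W_2, W_3$ remains a facing triple because $W_i^- \supseteq V_i^-$ only enlarges the wedge.

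The main obstacle I anticipate is producing $W_i$ so that $V_i$ and $W_i$ are themselves strongly separated, since Proposition~\ref{prop: non-product characterization}(3) on its own only yields strong separation of the auxiliary pair $W_i, W_i'$. Bridging this gap by iterating the nested-halfspaces construction, or by double-skewering to push a strongly separated pair arbitrarily deep into $V_i^+$ until $V_i$ becomes strongly separated from the innermost member, is the technical heart of the argument; once it is achieved, the transitivity principle and the facing-triple configuration finish the proof.
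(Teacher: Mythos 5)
Your proposal shares the paper's high-level strategy — obtain a facing triple from Theorem~\ref{thm: no facing triple} and then upgrade it to a pairwise strongly separated one via the transitivity principle you correctly isolate (if $A,B$ are strongly separated and $B$ separates $A$ from $C$, then $A,C$ are strongly separated; this is also implicitly how the paper concludes). However, the step you flag as the technical heart — producing, for each $V_i$, a hyperplane $W_i\subseteq V_i^+$ that is strongly separated from $V_i$ itself — is a genuine gap, and neither of the bridges you name closes it. Iterating Proposition~\ref{prop: non-product characterization}(3) on $W_i^+$ only produces a further sandwich $A_i^+\subseteq W_i^+\subseteq A_i'^+$ with $A_i,A_i'$ strongly separated, but nothing prevents a single hyperplane from crossing $V_i$, $W_i$, and $A_i$ all at once while avoiding $W_i'$ and $A_i'$, so you never learn anything about the hyperplanes crossing $V_i$. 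Likewise, double-skewering to push a strongly separated pair deep into $V_i^+$ does not make $V_i$ strongly separated from the innermost member: nesting and distance are not the same thing as strong separation, and the needed conclusion is precisely what remains to be proved.

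The gap is fillable, but by a different use of the group action than you gesture at. Given $W_i^+\subseteq V_i^+\subseteq W_i'^+$ with $W_i,W_i'$ strongly separated, apply the Flipping Lemma~\ref{lem: flipping lemma} to $W_i^+$ to obtain $\gamma\in G$ with $\gamma W_i^-\subseteq W_i^+$. Since $W_i'\subseteq W_i^-$, one gets $\gamma W_i'\subseteq\gamma W_i^-\subseteq W_i^+\subseteq V_i^+$, while $V_i\subseteq W_i^-\subseteq\gamma W_i^+$; hence $\gamma W_i$ separates $V_i$ from $\gamma W_i'$, and your own transitivity principle applied to the translated strongly separated pair $(\gamma W_i,\gamma W_i')$ shows that $V_i$ and $\gamma W_i'$ are strongly separated. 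One may then take $\gamma W_i'$ in place of your $W_i$, and the rest of your argument goes through. This is exactly the kind of reasoning the paper uses, though arranged more economically: the paper fixes a single strongly separated pair $(V_1,W_1)$ sandwiching $U_1^+$, uses the Flipping Lemma (and, in a second case, also the Double Skewering Lemma~\ref{lem: double skewering lemma}) to produce translates $g_2V_1, g_3V_1$ so that $\{V_1,g_2V_1,g_3V_1\}$ is a facing triple, and reads off pairwise strong separation from the translated pairs $(g_jV_1,g_jW_1)$ by the same transitivity. Your per-$V_i$ arrangement, once repaired as above, avoids the paper's second case and is arguably cleaner, but as written the flipping step is missing and the two heuristics you offer in its place do not establish it.
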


\begin{proof}
       By Theorem~\ref{thm: no facing triple} there exists a facing triple $U_1, U_2, U_3$ of hyperplanes in $\widetilde X$. Let $U_i^+$ be the halfspace of $U_i$ which does not contain the other two hyperplanes. By Proposition~\ref{prop: non-product characterization} there exists a pair of strongly separated hyperplanes $V_{1}, W_{1}$ such that $V_1^+\subseteq U_1^+\subseteq W_1^+$. 

       First consider the case where $U_2,U_3\subseteq W_1^-$, as in the left of Figure~\ref{fig: constructing strongly separated facing triple}. 
       \begin{figure}
           \centering
           \includegraphics[scale =0.7]{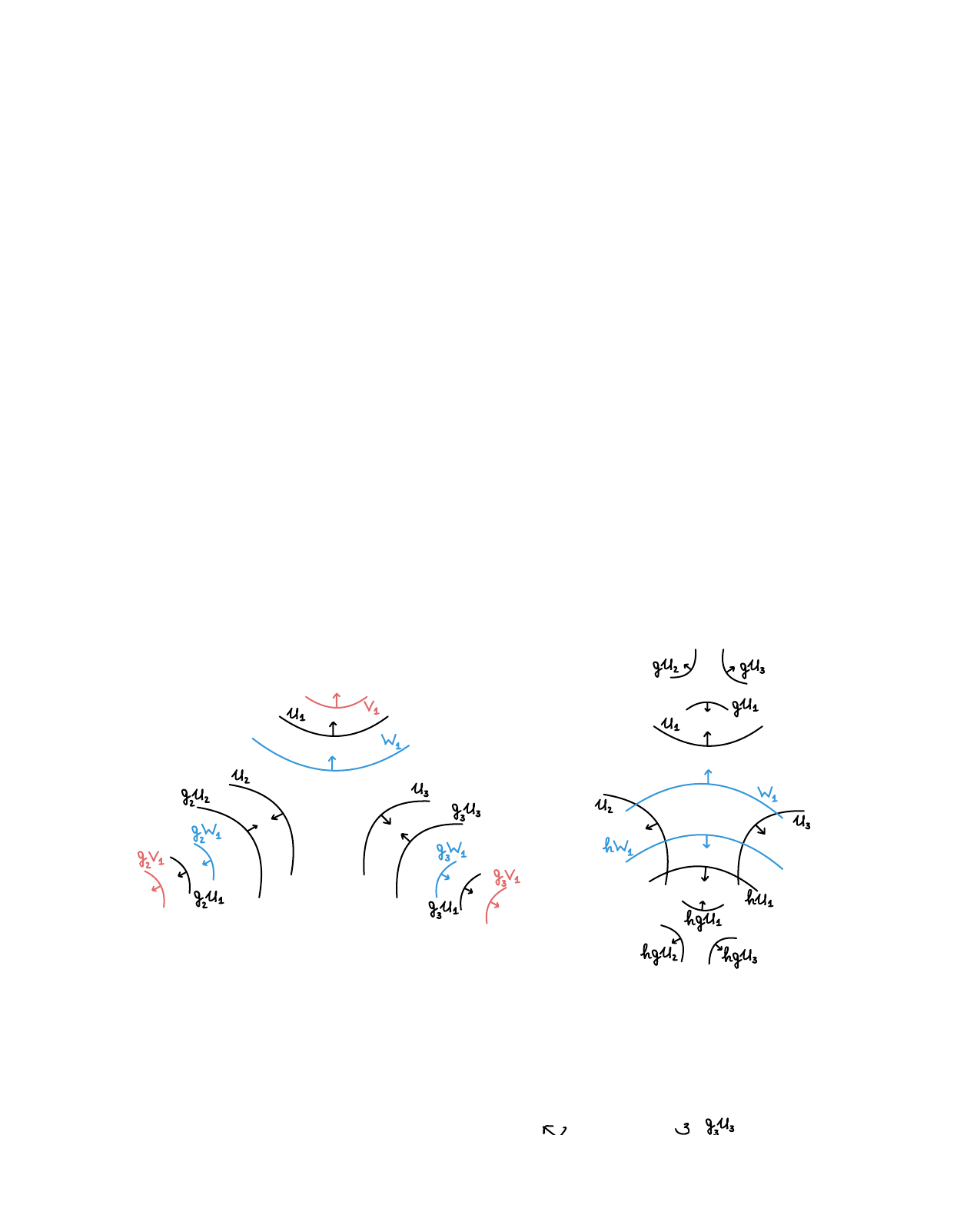}
           \caption{Left: Strongly separated facing triple $\{V_1, g_2V_1, g_3V_1\}$, when $U_2, U_3\subseteq W_1^-$. Right: The facing triple $\{U_1, hgU_2, hgU_3\}$ plays the role of $\{U_1, U_2, U_3\}$  on the left.}
           \label{fig: constructing strongly separated facing triple}
       \end{figure}
       For $i=2,3$, let $g_i\in G$ be an element flipping $U_1$, provided by Lemma~\ref{lem: flipping lemma}, i.e.\ $g_iU_i^-\subseteq U_i^+$. We claim that $\{V_1, g_2V_1, g_3V_1\}$ is a strongly separated facing triple. For symmetry, let $g_1 = id$, so we can write $V_1 = g_1V_1$. Then for $\{i,j\}\subseteq \{1,2,3\}$ we have
       \[g_iV_1^+ \subseteq g_iW_1^+\subseteq g_iU_i^-\subseteq g_jU_j^+\subseteq g_jW_j^-\subseteq g_jV_j^-.\] This shows that $g_iV_i$ and $g_jV_j$ are strongly separated, since $g_iV_1$ and $g_1W_1$ are.
      
       Now consider the remaining case, see the right diagram in Figure~\ref{fig:  constructing strongly separated facing triple}. 
       We will construct a new triple of hyperplanes $\{U_1, U_2', U_3'\}$ where $U_2', U_3'\subseteq W_1^-$.
       First let $g\in G$ be an element that flips $U_1$, i.e.\ $gU_1^-\subseteq U_1^+$. We also have $gU_i^+\subseteq gU_1^-\subseteq U_1^+$ for $i=2,3$. Now let $h\in G$ be an element flipping $W_1$, i.e.\ $hW_1^+\subseteq W_1^-$. We claim that $\{U_1, hgU_2, hgU_3\}$ is a facing triple of hyperplanes such that $hgU_i^+\subseteq W_1^-$. Clearly $hgU_2, hgU_3$ are disjoint as images of disjoint hyperplanes $U_2,U_3$ under $hg$. Moreover, for $i=2,3$ we have $hgU_i^+\subseteq hgU_1^-\subseteq hU_1^+\subseteq hW_1^+\subseteq W_1^-\subseteq U_1^-$.
\end{proof}

To summarize, we have the following.

\begin{cor}\label{cor: facing triple implies strongly separated facing triple}
    Let $X$ be a finite-dimensional nonpositively curved cube complex such that $\pi_1 X$ is nontrivial and acts essentially without a fixed point at infinity. The following are equivalent.
    \begin{enumerate}
        \item\label{cond1} $X$ is a non-product,
        \item\label{cond2} there is pair of strongly separated hyperplanes in $\widetilde X$.
        \item\label{cond3} there is a facing triple of pairwise strongly separated hyperplanes in $\widetilde X$.
        
    \end{enumerate}
\end{cor}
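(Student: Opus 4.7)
The corollary is a straightforward consolidation of the two main results proved immediately above, so my plan is to establish the three implications along a short cycle rather than to prove each pairwise equivalence directly.

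First I would observe that the implication $(1)\Rightarrow(2)$ is exactly the content of Proposition~\ref{prop: non-product characterization}, which under the standing assumptions provides the equivalence between $\widetilde X$ being a non-product and the existence of a pair of strongly separated hyperplanes. So this direction requires no additional work beyond invoking that proposition, once I note that the hypotheses on $G=\pi_1 X$ ensure $\Aut(\widetilde X)$ acts essentially without a fixed point at infinity in the sense required (using Remark~\ref{rmk: no fixed point} if needed to confirm the no-fixed-point condition).

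Next I would handle $(1)\Rightarrow(3)$, which is precisely Proposition~\ref{prop: strongly separated facing triple}: under the same assumptions on $\widetilde X$ and $G$, a non-product contains a facing triple of pairwise strongly separated hyperplanes.

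The implication $(3)\Rightarrow(2)$ is immediate: if $\{U_1,U_2,U_3\}$ is a facing triple of pairwise strongly separated hyperplanes, then in particular $U_1$ and $U_2$ form a pair of strongly separated hyperplanes.

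Finally, $(2)\Rightarrow(1)$ again follows from Proposition~\ref{prop: non-product characterization}, closing the cycle. I do not anticipate any real obstacle here; the only thing to be slightly careful about is verifying that the hypothesis ``$\pi_1X$ acts essentially without a fixed point at infinity'' on $\widetilde X$ allows me to invoke Propositions~\ref{prop: non-product characterization} and~\ref{prop: strongly separated facing triple}, whose hypotheses are phrased in terms of $\Aut(\widetilde X)$ or a subgroup thereof. Since $\pi_1 X\subseteq \Aut(\widetilde X)$ acts essentially without a fixed point at infinity, the proposition applies with $G=\pi_1 X$, which is exactly what I need.
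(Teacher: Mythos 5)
Your proof is correct and takes essentially the same route as the paper: establish $(1)\Rightarrow(3)$ via Proposition~\ref{prop: strongly separated facing triple}, $(3)\Rightarrow(2)$ trivially, and $(2)\Rightarrow(1)$ via Proposition~\ref{prop: non-product characterization}. The extra implication $(1)\Rightarrow(2)$ you include is harmless but redundant once the three-step cycle is in place.
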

\begin{proof}
    The implication $(\ref{cond3})\to (\ref{cond2})$ is obvious, the implication $(\ref{cond1})\to (\ref{cond3})$ holds by 
Proposition~\ref{prop: strongly separated facing triple}, and the implication $(\ref{cond2})\to (\ref{cond1})$ is part of Proposition~\ref{prop: non-product characterization}.
    \end{proof}


\section{Superconvex pseudographs}\label{sec:pseudograph}

A map $\varphi:Y \rightarrow X$ between
nonpositively curved cube complexes is a \emph{local isometry} if $\varphi$ is locally injective, maps open cubes homeomorphically to open cubes, and whenever $a$, $b$ are concatenable edges of $Y$, if $\varphi(a)\varphi(b)$ is a subpath of the attaching map of a $2$-cube of $X$, then $ab$ is a subpath of a $2$-cube in $Y$. If $Y\to X$ is a local isometry, then its lift $\widetilde Y \to \widetilde X$ is an embedding.

\begin{defn}[Superconvex Pseudograph]\label{defn:pseudograph}
A \emph{pseudograph} 
is a nonpositively curved cube complex whose immersed hyperplanes are contractible. 
Note that $\pi_1Y$ is free when $Y$ is a compact pseudograph \cite[Lem~9.9]{WiseBook}.
A compact pseudograph $Y$ has \emph{rank~$n$} if $\pi_1Y$ is a free group of rank~$n$. We will always assume that pseudographs are compact without stating it explicitly.

A local isometry $Y\rightarrow X$ is \emph{superconvex} if  $\widetilde Y$ is convex and  for any bi-infinite geodesic $\tilde \gamma$ in $\widetilde X$, if $\tilde \gamma \subset \neb_r(\widetilde Y)$ for some $r >0$ then $\tilde \gamma\subset \widetilde Y$, where $\neb_r(\widetilde Y)$ denotes the closed $r$-neighborhood of $\widetilde Y$ in $\widetilde X$. 
When $X$ is locally-finite and $Y$ is compact, this is equivalent to having an upper bound on the length $\ell$ of a combinatorial strip $[0,1]\times[0,\ell]\subseteq \widetilde X$ such that $\{0\}\times [0,\ell]\subseteq \widetilde Y$ but $[0,1]\times[0,\ell]\not\subseteq \widetilde Y$. See~\cite[Lem~2.40]{WiseBook}.
\end{defn}

\begin{nonexmp}\label{nonex: not superconvex}
    Let $X$ be a product of  non-tree finite graphs $Y_1\times Y_2$ with the natural cubical structure. Then $Y_i\to X$ is a local isometry of a pseudograph but  is not superconvex.
\end{nonexmp}

\begin{remark}\label{rem: superconvexity} Given a superconvex rank~$2$ pseudograph $Y\to X$, any local isometry $W \rightarrow Y$ provides a superconvex pseudograph $W \rightarrow X$. See Lemma~\ref{lem:bounded wall pieces}. 
\end{remark}

The goal of this section is the following:

\begin{prop}\label{prop:the core}
Let $X$ be a finite-dimensional nonpositively curved cube complex, where $\pi_1 X$ is nontrivial and acts essentially, without a fixed point at infinity. The following statements are equivalent:
\begin{enumerate}
\item\label{cond:nonproduct} $X$ is a non-product.
\item\label{cond:strongly separated pair} $\widetilde X$ has a pair of strongly separated hyperplanes.
\item\label{cond:facing triple} $\widetilde X$ has a facing triple of strongly separated hyperplanes.
\item\label{cond:rank1 pseudograph} There is a local isometry $Y\rightarrow X$ of a superconvex rank~$1$ pseudograph.
\item\label{cond:rank2 pseudograph} There is a local isometry $Y\rightarrow X$ of a superconvex rank~$2$ pseudograph.
\end{enumerate}
\end{prop}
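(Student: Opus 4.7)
The plan is to observe that Corollary~\ref{cor: facing triple implies strongly separated facing triple} already yields $(\ref{cond:nonproduct})\Leftrightarrow(\ref{cond:strongly separated pair})\Leftrightarrow(\ref{cond:facing triple})$, so I only need to close the cycle $(\ref{cond:facing triple})\Rightarrow(\ref{cond:rank2 pseudograph})\Rightarrow(\ref{cond:rank1 pseudograph})\Rightarrow(\ref{cond:strongly separated pair})$.

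For $(\ref{cond:rank2 pseudograph})\Rightarrow(\ref{cond:rank1 pseudograph})$, given a superconvex rank~$2$ pseudograph $Y\to X$, pick a combinatorial local geodesic loop $\gamma$ in $Y$ representing a primitive element of $\pi_1 Y$. Then $\gamma$ is a circle, hence a rank~$1$ pseudograph, and the immersion $\gamma\to Y$ is a local isometry; composing with $Y\to X$ and invoking Remark~\ref{rem: superconvexity} gives a superconvex rank~$1$ pseudograph $\gamma\to X$. For $(\ref{cond:rank1 pseudograph})\Rightarrow(\ref{cond:strongly separated pair})$, let $Y\to X$ be superconvex of rank~$1$. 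Contractibility of the immersed hyperplanes of $Y$ makes the hyperplanes of $\widetilde Y$ uniformly bounded compact ``crosscuts'' of the quasi-line $\widetilde Y$. Pick hyperplanes $V_1, V_2$ of $\widetilde Y$ whose distance exceeds both this diameter bound and the superconvexity constant of $Y\to X$. If some $V\subseteq \widetilde X$ crossed both $V_1, V_2$, then either $V\cap \widetilde Y$ would be a hyperplane of $\widetilde Y$ meeting both (impossible by the diameter bound) or $N(V)$ would produce a long combinatorial strip alongside $\widetilde Y$ but not in it (violating superconvexity); hence $V_1, V_2$ are strongly separated in $\widetilde X$.

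The hard implication is $(\ref{cond:facing triple})\Rightarrow(\ref{cond:rank2 pseudograph})$. My plan is: using the Flipping Lemma~\ref{lem: flipping lemma} and the Double Skewering Lemma~\ref{lem: double skewering lemma} on a strongly separated facing triple $U_1, U_2, U_3$, produce two hyperbolic elements $h_1, h_2\in \pi_1 X$ whose axes skewer distinct pairs among the $U_i$ across strongly separated hyperplanes, so that each $h_i$ is rank~one. A ping-pong argument on the nested halfspace configurations (made tractable precisely by strong separation) then shows $F=\langle h_1, h_2\rangle$ is free of rank~$2$ and acts freely and discretely on $\widetilde X$. Define $\widetilde Y$ to be a minimal $F$-invariant convex subcomplex---for instance, the cubical convex hull of an orbit $F\cdot p$ for a basepoint $p$ in the wedge---and set $Y = F\backslash \widetilde Y$.

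The substantive checks are then: (a) compactness of $Y$, from cocompactness of $F\curvearrowright \widetilde Y$ and finite dimensionality; (b) contractibility of the immersed hyperplanes of $Y$---equivalently, that no nontrivial element of $F$ stabilizes a hyperplane of $\widetilde Y$, which strong separation prevents, since a stabilized hyperplane would cross strongly separated $F$-translates of the $U_i$; and (c) superconvexity of $Y\to X$, where the rank-one/strong separation properties of the $h_i$-axes obstruct long bi-infinite geodesics in $\neb_r(\widetilde Y)\setminus \widetilde Y$. The most delicate balance is choosing $\widetilde Y$ small enough that its hyperplanes remain separated under $F$ yet large enough to absorb nearby bi-infinite geodesics; this is where the bulk of the argument will lie.
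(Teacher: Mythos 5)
Your cycle $(\ref{cond:facing triple})\Rightarrow(\ref{cond:rank2 pseudograph})\Rightarrow(\ref{cond:rank1 pseudograph})\Rightarrow(\ref{cond:strongly separated pair})$ matches the paper's decomposition, and your sketch of $(\ref{cond:facing triple})\Rightarrow(\ref{cond:rank2 pseudograph})$ is in the right spirit (the paper's version is more explicit: it takes the convex hull of a specific $F$-invariant tree $T$ built from translates of three geodesics, which allows a clean bound on collections of pairwise-crossing hyperplanes). Your argument for $(\ref{cond:rank1 pseudograph})\Rightarrow(\ref{cond:strongly separated pair})$ also matches the paper's, modulo invoking the Helly property to conclude that crossings extending from $\widetilde Y$ can be pulled back into $\widetilde Y$.

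However, there is a genuine gap in $(\ref{cond:rank2 pseudograph})\Rightarrow(\ref{cond:rank1 pseudograph})$. You claim that a combinatorial local geodesic loop $\gamma\to Y$ is a local isometry of cube complexes. This is false when $\dim Y>1$. In the $1$-skeleton (combinatorial) metric, the two-edge path across a corner of a square has the same length as the alternative two-edge path, so a combinatorial local geodesic is perfectly allowed to turn the corner of a $2$-cube; but the local-isometry condition (Definition in Section~\ref{sec:pseudograph}) explicitly forbids two consecutive edges in the domain from mapping to a subpath of the attaching map of a $2$-cube when the domain has no $2$-cube to fill it. A pseudograph $Y$ is typically higher-dimensional, so there is no a priori reason that a primitive element of $\pi_1 Y$ is represented by a loop avoiding all such corners. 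The paper sidesteps this by noting that $\pi_1 Y$ is free (hence hyperbolic) and $Y$ is compact, so any cyclic subgroup is quasiconvex and admits a compact cocompact core $W\to Y$ (Haglund); $W$ is then a local isometry with $\pi_1 W\cong\integers$ and is a pseudograph (its hyperplanes are convex subcomplexes of the bounded hyperplanes of $\widetilde Y$, hence compact and simply connected), and Remark~\ref{rem: superconvexity} gives superconvexity of $W\to X$. You should replace the local-geodesic-loop step with this cocompact-core construction; once you do, everything else in your outline goes through.
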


The equivalence of \eqref{cond:nonproduct}, \eqref{cond:strongly separated pair}, and \eqref{cond:facing triple} is Corollary~\ref{cor: facing triple implies strongly separated facing triple}. We will prove $\eqref{cond:facing triple}\Rightarrow \eqref{cond:rank2 pseudograph}$ and $\eqref{cond:rank1 pseudograph}\Rightarrow\eqref{cond:strongly separated pair}$. The implication $\eqref{cond:rank2 pseudograph}\Rightarrow\eqref{cond:rank1 pseudograph}$ uses Remark~\ref{rem: superconvexity} and the following observation: if $Y$ is a compact cube complex and $\pi_1 Y$ is hyperbolic, then every quasiconvex, in particular cyclic, subgroup of $\pi_1 Y$ has a cocompact core in $Y$ \cite{HaglundGraphProduct}.

\begin{proof}[Proof of Proposition~\ref{prop:the core} \eqref{cond:facing triple} $\Rightarrow$ \eqref{cond:rank2 pseudograph}]
Let $\{U_1, U_2, U_3\}$ be a strongly separated facing triple of hyperplanes in $\widetilde X$, and let $W$ be the associated wedge contained in $U_1^-\cap U_2^-\cap U_3^-$.

By Lemma~\ref{lem: flipping lemma} applied to $U_3^+$ there exists $g\in G$ such that $U_3^- \subseteq g U_3^+$. Let $U_4 =g U_1$ and $U_5 = g U_2$. See Figure~\ref{fig: free group}. Note that $\{U_4, U_5, U_3\}$ is also a strongly separated facing triple of hyperplanes, and let $W'$ be the associated wedge contained in $U_4^-\cap U_5^-\cap U_3^+$. 
By applying Lemma~\ref{lem: double skewering lemma} to the pairs of halfspaces $U_1^+\subseteq U_4^-$ and $U_2^+\subseteq U_5^-$, we obtain two isometries $a,b$ in $G$, such that $aU_4^-\subseteq U_1^+$ and $bU_5^-\subseteq U_2^+$. See Figure~\ref{fig: free group}.
\begin{figure}
    \centering
    \includegraphics[width=0.5\linewidth]{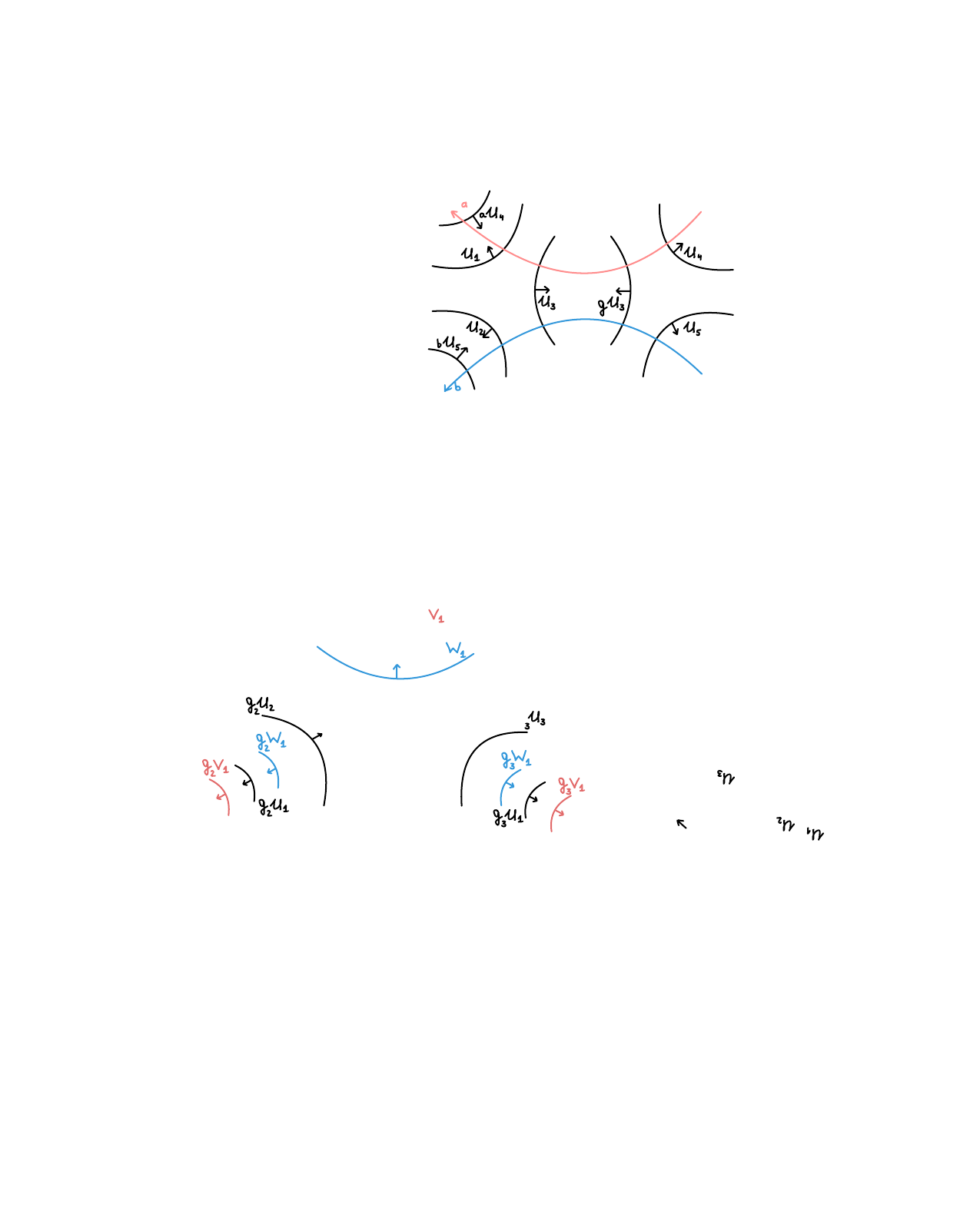}
    \caption{Isometries $a,b$ form a free group.}
    \label{fig: free group}
\end{figure}
By the ping-pong lemma, $F=\langle a,b\rangle$ is a  rank~$2$ free subgroup of $G$.

Let $p,q$ be points in the wedges $W, W'$ respectively, and let $A_{p,q}$ be a geodesic from $p$ to $q$. Similarly, let $A_{p, aq}$ and $A_{p, bq}$ be geodesics from $p$ to $aq$ and $bq$ respectively.
Let $T$ be an $F$-cocompact tree that is the union of all the $F$-translates of  $A_{p,q}$, $A_{p,aq}$, and $A_{p,bq}$.

Let $\widetilde Y$ be the convex hull of $T$.
Note that $\widetilde Y$ is the intersection of minor half-spaces of $\widetilde X$ that contain $T$.
It is clear that $\widetilde Y$ is $F$-invariant.
We claim that $\widetilde Y$ is $F$-cocompact.
Firstly, note that any hyperplane of $\widetilde Y$ is dual to an edge of $T$,
as otherwise, $T$ would lie in one of its associated minor halfspaces.
Secondly, each hyperplane of $\widetilde Y$ might intersect at most one $F$-translate of the wedge $W'$ as otherwise it would have two intersect translates (by the same element from $G$) of more than one of the hyperplanes $U_1, U_2, U_3$, violating their strong separation.
Let $r$ be the maximal length of $A_{p,q}, A_{p,aq}, A_{p,bq}$.
Observe that any collection of pairwise-crossing hyperplanes of $\widetilde Y$
are dual to edges that lie in a $2r$-ball in $T$.
Thus $\widetilde Y$ has finitely many $F$-orbits of maximal cubes. Consequently $\widetilde Y$ is $F$-cocompact. Let $Y = \widetilde Y/F$, then $Y\to X$ is a local isometry.
We now claim that $Y\to X$ is superconvex. An upper bound on the length $\ell$ of a rectangle $[0,1]\times[0,\ell]$ such that $\{0\}\times [0,\ell]\subset \widetilde Y$ but $[0,1]\times[0,\ell]\not\subset \widetilde Y$ can be taken to be $\ell = 2r$, again by the strong separation of $U_1, U_2, U_3$.
Finally, strong separation of $U_1, U_2, U_3$ also ensures that all the hyperplanes of $\widehat Y$ are compact. In particular, the hyperplanes of $Y$ must be simply connected, and therefore contractible. This proves that $Y$ is a pseudograph.
\end{proof}

The  proof of   \eqref{cond:rank1 pseudograph} $\Rightarrow$ \eqref{cond:strongly separated pair} uses the Helly property of CAT(0) cube complexes. 
This was originally stated in \cite{Gerasimov98},
 but see, for instance, \cite[Lem~2.10]{WiseBook}.
\begin{lem}[Helly property]\label{lem:helly}
Let $\{Y_i\}$ be a finite collection of convex subcomplexes of a CAT(0) cube complex. If $Y_i\cap Y_j\neq \emptyset$ for each $i,j$, then $\bigcap_i Y_i \neq \emptyset$.
\end{lem}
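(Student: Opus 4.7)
The plan is to induct on the number $n$ of convex subcomplexes, with the base case $n = 2$ being the hypothesis. The inductive step will rely on the combinatorial gate projection $\pi_Z$ onto a convex subcomplex $Z$, which sends each vertex $v$ to the unique closest vertex of $Z$, and is characterized by the property that any combinatorial geodesic from $v$ to $\pi_Z(v)$ crosses precisely the hyperplanes that separate $v$ from $Z$.

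The core ingredient will be the following sub-lemma: if $Y, Z$ are convex subcomplexes with $Y \cap Z \neq \emptyset$ and $p$ is a vertex of $Y$, then $\pi_Z(p) \in Y$ (so in fact $\pi_Z(p) \in Y \cap Z$). I would prove this via the hyperplane characterization of convex subcomplexes: a vertex $q$ lies in $Y$ iff for every hyperplane $V$ with $Y$ contained in a single halfspace $V^+$, one has $q \in V^+$. Given such a $V$, the hypothesis $Y \cap Z \neq \emptyset$ forces $Z \not\subseteq V^-$, since no vertex can lie in both halfspaces. Hence either $Z \subseteq V^+$ or $Z$ crosses $V$; in either case $V$ does not separate $p$ from $Z$, so the geodesic from $p$ to $\pi_Z(p)$ does not cross $V$, and $\pi_Z(p)$ lies on the same side of $V$ as $p$, namely in $V^+$. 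This proves $\pi_Z(p) \in Y$.

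Given the sub-lemma, the inductive step is immediate. Assuming the Helly property for any $n-1$ pairwise intersecting convex subcomplexes, let $\{Y_1, \ldots, Y_n\}$ be pairwise intersecting. By induction pick a vertex $p \in Y_2 \cap \cdots \cap Y_n$, and set $q = \pi_{Y_1}(p)$; then $q \in Y_1$ by construction, and applying the sub-lemma with $Y = Y_i$ and $Z = Y_1$ (using $Y_1 \cap Y_i \neq \emptyset$ and $p \in Y_i$) gives $q \in Y_i$ for every $i \geq 2$. Hence $q \in \bigcap_i Y_i$.

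The only delicate point is the sub-lemma, whose proof rests on the precise hyperplane description of the gate projection in a CAT(0) cube complex; once that is in hand the induction is purely formal. An alternative route would be to argue via the cubical wallspace dual, producing a consistent orientation that assigns to each hyperplane a halfspace containing each relevant $Y_i$ (or declaring that $Y_i$ crosses it), and then invoking Sageev reconstruction to extract a vertex of the intersection. The projection argument, however, is more direct and avoids the descending chain verification needed for that approach.
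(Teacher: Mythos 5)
Your proof is correct, and since the paper does not prove this lemma (it simply cites Gerasimov and Wise's book), there is no in-paper argument to compare against; your argument is the standard one via combinatorial gate projections. The sub-lemma---that $\pi_Z(p)$ stays inside any convex $Y$ containing $p$ and meeting $Z$---is exactly the right reduction, and your hyperplane-by-hyperplane verification (convexity as intersection of halfspaces, plus the characterization that the geodesic from $p$ to $\pi_Z(p)$ crosses precisely the hyperplanes separating $p$ from $Z$) is both correct and complete. The induction is then, as you say, purely formal. This matches the proof one finds in the cited sources, so no further discussion is needed.
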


\begin{proof}[Proof of Proposition~\ref{prop:the core} \eqref{cond:rank1 pseudograph} $\Rightarrow$ \eqref{cond:strongly separated pair}]
 Let $Y \rightarrow X$ be a superconvex rank~$1$ pseudograph. Since the immersed hyperplanes of $Y$ are contractible and compact, their diameter is bounded by some constant $K$. This  also bounds  the diameters of  hyperplanes of $\widetilde Y$.
 Let $L$ be the upper bound from the definition of superconvexity, i.e.\ for every $\ell>L$ if $[0,1]\times[0,\ell]\subseteq \widetilde X$ and  $\{0\}\times[0,\ell]\subseteq \widetilde Y$, then 
$[0,1]\times[0,\ell]\subseteq \widetilde Y$. We set $M = \max\{K,L\}$.
By Lemma~\ref{lem:helly}, disjoint hyperplanes of $\widetilde Y$ extend to disjoint hyperplanes in $\widetilde X$. 
Thus any two hyperplanes in $\widetilde Y$ that are at distance greater than $M$ in $\widetilde Y$ are strongly separated in $\widetilde X$.
\end{proof}

The following direct proof of $\eqref{cond:rank2 pseudograph} \Rightarrow \eqref{cond:facing triple}$ is illustrative.

\begin{proof}[Proof of Proposition~\ref{prop:the core} \eqref{cond:rank2 pseudograph} $\Rightarrow$ \eqref{cond:facing triple}]
 Let $Y \rightarrow X$ be a superconvex rank~$2$ pseudograph. Let $K$ be the upper bound on  the diameters of  hyperplanes of $\widetilde Y$.
 Let $L$ be the upper bound from the definition of superconvexity. We set $M = \max\{K,L\}$.
 
By Lemma~\ref{lem:helly}, disjoint hyperplanes of $\widetilde Y$ extend to disjoint hyperplanes in $\widetilde X$. 
There exists a finite cover of $Y$ where all hyperplanes are embedded. By possibly passing to a superconvex subcomplex of that finite cover, we can assume $Y$ is a rank~$2$ superconvex pseudograph with embedded hyperplanes.

Let $U$ be a non-separating hyperplane of $Y$, let $Y'=Y-N(U)$ and note that $\pi_1Y'\cong \integers$.
Consider a cyclic cover $\widehat Y'$ of $Y'$ whose degree $m$ is sufficiently large, and let $N_0$ be a lift of $N(U)$.

We claim there is a non-separating hyperplane $U'$  of $\widehat Y'$ (i.e.\ so that  $\pi_1(\widehat Y'-U')=1$) such that $\dist_{\widetilde Y}(U', N_0)>M$. Indeed $Y'$ and therefore also the universal cover $\widetilde Y'$ are locally finite, but the diameter of $\widetilde Y'$ is infinite. Thus if the degree $m>M$, then there exist hyperplanes at distance greater than $M$ away from $N_0$, any such non-separating hyperplane can be picked as $U'$. 
In particular $U'$ does not intersect $N_0$.

Let $\widehat Y$ be the cover of $Y$ corresponding to $\widehat Y'$, i.e.\ so that $\Aut(\widehat Y\to Y)$ is naturally isomorphic to $\Aut(\widehat Y'\to Y')$. 
Consider lifts of $U,U'$ to $\widehat Y$, where $\partial N(U)$ naturally corresponds to $N_0$.
Then the lift of $\widehat Y-U-U'$ to $\widetilde X$ provides a facing quadruple of hyperplanes in $\widetilde X$. Indeed, $\widehat Y-U-U'$ embeds in $\widetilde Y$ and is bounded by a quadruple of disjoint hyperplanes $U_1, U_2, U'_1, U'_2$. These hyperplanes extend to disjoint hyperplanes of $\widetilde X$, which we continue to denote by $U_1, U_2, U'_1, U'_2$. We now show that $U_1, U'_1, U'_2$ are pairwise strongly separated.  Note that the distance between any two of $U_1, U'_1, U'_2$ is bounded below by $M$.

Let $V$ and $V'$ be two hyperplanes that intersect $\widetilde Y$.
We claim that if there exists a hyperplane $W$ in $\widetilde X$ that intersects both $V$ and $V'$, then $V$ and $V'$ are already intersected by another hyperplane in $\widetilde Y$.

Consider a disc diagram $D$ containing segments of hyperplanes $V,V', W$ whose boundary path is a concatenation of four paths: one lying in $\widetilde Y$, and three paths in the carriers $N(V), N(V'), N(W)$ respectively. We assume that $D$ has minimal area among all the choices of such disc diagrams. By convexity of $N(W), N(V), N(V'), \widetilde Y$ and minimality of $D$, we can deduce that $D$ is a grid. Indeed any hyperplane in $D$ starting in $\widetilde Y$ must exit $D$ in $N(W)$, and vice-versa, as otherwise we can pick a disc diagram of smaller area. This implies that for the hyperplane $W'$ which intersects $V$ and $V$ closest to $\widetilde Y$, the intersection $D\cap N(W')$ is of the form $[0,1]\times [0, \ell]$, where $\{0\}\times [0, \ell]\subseteq \widetilde Y$. By superconvexity of $\widetilde Y$, either $W'$ intersects $V, V'$ in $\widetilde Y$, or $\ell\leq L$.

Thus if $\dist_{\widetilde X}(V, V')> L$, then $V, V'$ are either strongly separated, or they are both intersected by some hyperplane of $\widetilde Y$. Since the diameter of any hyperplane of $\widetilde Y$ is bounded above by $K$, so the hyperplanes $V, V'$ must be strongly separated, if we also have $\dist_{\widetilde X}(V, V')> K$.
\end{proof}


\section{Cubical small-cancellation review}\label{sec: cubical small cancellation}

\subsection{Cubical presentations and small-cancellation conditions}

A \textit{cubical presentation} $\langle X\mid \{Y_i\} \rangle$ consists of the following data:
\begin{enumerate}
    \item A connected nonpositively curved cube complex $X$. 
    \item  A collection of local isometries of connected nonpositively curved cube complexes $Y_i \overset{\varphi_i} \longrightarrow X$.
\end{enumerate} Local isometries of nonpositively curved cube complexes are $\pi_1$-injective, and we define the \emph{fundamental group} of $\langle X\mid \{Y_i\} \rangle$ as $\pi_1 X/\nclose{\{\pi_1 Y_i\}}$. Note that this group is isomorphic to the fundamental group of the space $X^*$ obtained by coning off each $Y_i$ in $X$. Throughout, we write $X^*=\langle X\mid \{Y_i\} \rangle$ and identify a cubical presentation with its coned-off space.
A cubical presentation $X^*$ is \emph{compact} if both $X$ and all $Y_i$'s are compact. The universal cover $\widetilde X^*$ of the coned-off space $X^*$ is $\hat X\cup \bigcup Cone(gY_i)$ where $\hat X$ is the covering space of $X$ associated to the kernel of the map $\pi_1 X \rightarrow \pi_1X^*$, and $g\in \pi_1 X^*$. We refer to $\hat X$ as the \emph{cubical part} of $\widetilde X^*$.

An \emph{abstract contiguous cone-piece} of $Y_j$ in $Y_i$ is an intersection $\widetilde{Y}_j \cap \widetilde{Y}_i$ where either $i \neq j$ or where $i = j$
but $\widetilde{Y}_j \neq \widetilde{Y}_i$. A \emph{cone-piece} of $Y_j$ in $Y_i$ is a path $p \rightarrow P$ in an abstract contiguous cone-piece of $Y_j$ in $Y_i$.
An \emph{abstract contiguous wall-piece} of $Y_i$ is a non-empty intersection $N(H) \cap \widetilde{Y}_i$ where $N(H)$ is
the carrier of a hyperplane $H$ that is disjoint from $\widetilde{Y}_i$. A \emph{wall-piece} of $Y_i$ is a path $p \rightarrow P$ in an abstract contiguous wall-piece of $Y_i$.
A \emph{piece} is either a cone-piece or a wall-piece.

Recall that the \emph{systole} of $Y$, denoted by $\sys(Y)$, is the infimum of lengths of  essential combinatorial paths in $Y$.
For a constant $\alpha > 0$, the cubical presentation $X^*=\langle X \mid \{Y_i\} \rangle$ satisfies the $C'(\alpha)$ \emph{small-cancellation} condition if
$\diam (P) < \alpha \sys(Y_i)$
for every piece $P$ in $Y_i$.

As in the case of classical small-cancellation theory, when $\alpha$ is small, the $C'(\alpha)$ condition provides control over $\pi_1X^*$. This is explained in \cite{WiseBook} at $\alpha=\frac1{12}$, and in \cite{Jankiewicz2017} at $\alpha=\frac18$.

Let $Y \rightarrow X$ be a local isometry. $\Aut_X(Y)$ is the group of automorphisms $\psi: Y \rightarrow Y$ such that the diagram below is commutative:
\[\begin{tikzcd}
Y \arrow[r, "\psi"] \arrow[rd] & Y \arrow[d] \\
                               & X          
\end{tikzcd}\]
If $Y$ is simply connected, then $\Aut_X(Y)$ is equal to $\Stab_{\pi_1X}(Y)$.

We adopt the convention that two elevations $\widetilde{Y}_j, \widetilde{Y}_i$ with $i=j$ are regarded as equal if they differ only by an element of $\Aut_X(Y)$.



\section{Small-cancellation via the pseudograph}\label{sec: small cancellation via pseudograph}

In this section we  prove the first part of Theorem~\ref{thm:main}.
Throughout this section we assume  $X$ is a 
nonpositively curved cube complex, which admits a local isometry $Y\to X$ of a superconvex rank~$2$ pseudograph.

The strategy of the proof is as follows. In Section~\ref{sec: passing to malnormal}, we construct a further superconvex rank~$2$ pseudograph whose fundamental group is malnormal in $G$. Then in Section~\ref{sec: induced small-cancellation} we use it to pick a pseudograph so that the cubical presentation with the pseudograph as a relator satisfies the small-cancellation conditions, which also uses the uniform bound on the size of wall-pieces obtained in Section~\ref{sec: uniform bound}. The proof of the first part of Theorem~\ref{thm:main}, combining these ingredients, is in Section~\ref{sec: proof without cocompactness}.

\subsection{Uniform bound on the size of wall-pieces}\label{sec: uniform bound}

\begin{lem}\label{lem:bounded wall pieces}
Let $Y \rightarrow X$ be a  superconvex pseudograph. 
There is $M'>0$ such that for any local-isometry $W\rightarrow Y$, any wall-piece of $W\rightarrow X$ has diameter at most $M'$. Hence, $W \rightarrow X$ is itself superconvex.
\end{lem}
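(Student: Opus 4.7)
My plan is to reduce both claims to two uniform bounds coming from $Y$: since $Y$ is a compact pseudograph, its immersed hyperplanes are compact (being contractible), so hyperplanes of $\widetilde Y$ share a uniform diameter bound $K$; and the superconvex local isometry $Y \to X$ gives a strip-length bound $L$ as in Definition~\ref{defn:pseudograph}. I would set $M' := \max\{K + 1, L\}$ and observe that any local isometry $W \to Y$ lifts to a convex embedding $\widetilde W \hookrightarrow \widetilde Y \hookrightarrow \widetilde X$ (using that $\widetilde Y$ is convex in $\widetilde X$ by superconvexity of $Y\to X$).

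To bound a wall-piece $N(H) \cap \widetilde W$ of $W \to X$, with $H$ a hyperplane of $\widetilde X$ disjoint from $\widetilde W$, I would split on whether $H$ meets $\widetilde Y$. If $H$ is disjoint from $\widetilde Y$, then $N(H) \cap \widetilde Y$ is itself a wall-piece of $Y \to X$: every combinatorial geodesic inside it uses only edges parallel to $H$ (since no edge of $\widetilde Y$ is dual to $H$), and translating such a geodesic across $H$ produces a combinatorial strip of equal length with one side in $\widetilde Y$ and the other outside, forcing its length to be at most $L$ by superconvexity of $Y \to X$. If $H$ meets $\widetilde Y$, then $H \cap \widetilde Y$ is a hyperplane of $\widetilde Y$ of diameter at most $K$, and its carrier $N(H) \cap \widetilde Y$ has diameter at most $K + 1$. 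In either case $N(H) \cap \widetilde W \subseteq N(H) \cap \widetilde Y$ has diameter at most $M'$.

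For the ``hence'', I would bound the length $\ell$ of any combinatorial strip $[0,1] \times [0, \ell] \subseteq \widetilde X$ with $\{0\} \times [0, \ell] \subseteq \widetilde W$ and $[0,1] \times [0, \ell] \not\subseteq \widetilde W$, again splitting on whether the strip lies in $\widetilde Y$. If the strip lies in $\widetilde Y$, then the midcubes of its $\ell$ two-cubes are vertices of the hyperplane $H \cap \widetilde Y$ of $\widetilde Y$ at pairwise distances equal to their index differences, forcing $\ell \leq K$. If the strip leaves $\widetilde Y$, then since $\{0\} \times [0, \ell] \subseteq \widetilde W \subseteq \widetilde Y$ while the strip does not, superconvexity of $Y \to X$ directly gives $\ell \leq L$. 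Either way $\ell \leq M'$.

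The main conceptual subtlety is that the wall-piece bound for $W$ does not by itself imply superconvexity of $W \to X$: the hyperplane $H$ dual to an offending strip could still meet $\widetilde W$, in which case the $\{0\}$-side lies in no wall-piece of $W$ and one cannot directly invoke the bound on $N(H)\cap\widetilde W$. The intermediate pseudograph $\widetilde Y$ with its bounded hyperplanes sidesteps this pathology, since any such $H$ also meets $\widetilde Y$ and so is pinned down by the uniform bound $K$.
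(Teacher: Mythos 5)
Your proof is correct and takes essentially the same route as the paper's: both split the wall-piece $N(H)\cap\widetilde W \subseteq N(H)\cap\widetilde Y$ according to whether $H$ meets $\widetilde Y$, using compactness and contractibility of the hyperplanes of $Y$ in one case and superconvexity of $Y\to X$ in the other. You additionally spell out the ``hence $W\to X$ is superconvex'' part (which the paper leaves implicit), and this argument is also sound; a small imprecision is that the midcubes of the $\ell$ two-cubes of the strip are edges, not vertices, of $H\cap\widetilde Y$, though the resulting diameter bound $\ell$ and the conclusion $\ell\le K$ still hold.
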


\begin{proof}
For $W\rightarrow Y$, any wall-piece of $W\rightarrow X$  maps  to either a hyperplane of $\widetilde Y$ or to a wall-piece of $Y\rightarrow X$. Both of these have uniformly bounded diameter.

Indeed, in the first case, the wall-piece is uniformly bounded because $Y$ is a pseudograph (in particular, compact) and therefore has finitely many hyperplanes, all of which are contractible. In the second case,  the wall-piece  is  uniformly bounded  because $Y$ is compact and superconvex.
\end{proof}

\subsection{Passing to a malnormal free subgroup}\label{sec: passing to malnormal}

\begin{defn}[fiber-product]
Let $f:Y\rightarrow X$ be a map.
The \emph{fiber-product} $Y \otimes_X Y$ is the subspace of $Y\times Y$
consisting of the preimage of the diagonal $\{(x,x): x \in X\}$ using the map
$f\times f : Y\times Y \rightarrow X\times X$.
The \emph{diagonal} component of $Y\otimes_X Y$ is the subspace $\{(y,y): y\in Y\}$.
The projection maps $Y \leftarrow Y\times Y\rightarrow Y$ induce projection maps
 $Y \leftarrow Y\otimes_X Y\rightarrow Y$.
 
When $f$ is a local-isometry  map between nonpositively curved cube complexes,  $Y\otimes_X Y$ is a  nonpositively curved cube complex, and the projection maps are local-isometries.
Concretely, the $i$-cubes of $Y \otimes_X Y$ are pairs of $i$-cubes in $Y$ that map to the same $i$-cube of $X$.

\end{defn}

\begin{lem}\label{no cover}
    No  non-diagonal component of $Y\otimes_X Y$ is a finite cover of $Y$.
\end{lem}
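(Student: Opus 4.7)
The plan is to argue by contradiction. Suppose $C$ is a non-diagonal component of $Y \otimes_X Y$ with $p_1 : C \to Y$ a finite cover of degree $d$; the case where $p_2$ is a cover is symmetric. Identifying components of $Y \otimes_X Y$ with double cosets in $H \backslash G / H$ (where $H := \pi_1 Y$ is identified with $\Stab_G(\widetilde Y)$ for a fixed elevation $\widetilde Y \subseteq \widetilde X$, and $G := \pi_1 X$), the component $C$ corresponds to some $g \in G \setminus H$; its fundamental group is $K := H \cap gHg^{-1}$, and its universal cover $\widetilde C$ is canonically identified with the convex subcomplex $\widetilde Y \cap g \widetilde Y \subseteq \widetilde X$, with $p_1$ lifting to the inclusion $\widetilde Y \cap g\widetilde Y \hookrightarrow \widetilde Y$.

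First I would convert the covering hypothesis into a geometric one. Since $p_1$ is a covering, its lift $\widetilde p_1 : \widetilde C \to \widetilde Y$ must be an isomorphism; combining this with the identification $\widetilde C = \widetilde Y \cap g \widetilde Y$ yields $\widetilde Y \cap g \widetilde Y = \widetilde Y$, i.e.\ $\widetilde Y \subseteq g \widetilde Y$ in $\widetilde X$. Non-diagonality ($g \notin H$) precludes equality, giving the strict inclusion $\widetilde Y \subsetneq g\widetilde Y$.

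For the contradiction I would run a cube-counting argument. Applying Nielsen--Schreier to $K \leq H \cong F_2$ with $[H:K] = d$ gives $\rank(K) = d+1$, and the same formula forces any finite-index embedding of $K$ into $gHg^{-1} \cong F_2$ to have index $d$. The superconvex pseudograph hypothesis ensures $K$ is in fact of finite index in $gHg^{-1}$: otherwise infinitely many distinct $gHg^{-1}$-translates of $\widetilde Y$ would lie inside $g\widetilde Y$, and the uniform bounds on hyperplane diameters (since $g\widetilde Y$ is a pseudograph) together with the wall-piece bound of Lemma~\ref{lem:bounded wall pieces} would yield, via the axis of a hyperbolic $k \in K$, a bi-infinite geodesic in a uniform neighborhood of $\widetilde Y$ that is not contained in $\widetilde Y$, violating superconvexity. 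Once $[H:K] = [gHg^{-1}:K] = d$ is established, both $\widetilde Y / K$ and $g \widetilde Y / K$ are finite degree-$d$ covers of $Y$ with exactly $d \cdot |Y|$ cubes each. The $K$-equivariant strict inclusion $\widetilde Y \subsetneq g \widetilde Y$ descends to an injective cubical map $\widetilde Y/K \hookrightarrow g\widetilde Y/K$ between finite cube complexes of equal cardinality, which must be an isomorphism; this forces $\widetilde Y = g \widetilde Y$, contradicting strict inclusion.

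The hard part will be the superconvexity step establishing $[gHg^{-1}:K] < \infty$; without it the cube-counting comparison breaks down since $g\widetilde Y/K$ could a priori be an infinite cover of $Y$. I expect this to mirror Proposition~\ref{prop:the core}~\eqref{cond:rank2 pseudograph}$\Rightarrow$\eqref{cond:facing triple} in spirit, where superconvexity is used to convert distance bounds in $\widetilde X$ into containment in $\widetilde Y$.
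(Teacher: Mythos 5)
Your approach diverges substantially from the paper's, and it has two gaps. First, a circularity at the start: you conclude that non-diagonality ($g \notin H$) precludes $\widetilde Y = g\widetilde Y$, but $\widetilde Y = g\widetilde Y$ is equivalent to $g \in \Stab_G(\widetilde Y)$, not to $g \in H$. The containment $\Stab_G(\widetilde Y) = H$ (i.e.\ $\Aut_X(Y) = 1$) is essentially a consequence of what you are trying to prove, so you cannot assume it to launch the argument. Second, and this is the real problem you flag yourself, you never actually establish $[gHg^{-1}:K] < \infty$. The sketch via superconvexity does not hold up: it is unclear why infinite index would produce infinitely many \emph{distinct} $gHg^{-1}$-translates of $\widetilde Y$ inside $g\widetilde Y$ (distinctness again hinges on understanding $\Stab(\widetilde Y)$), and the jump from a uniform hyperplane-diameter bound to a geodesic ``near but not in $\widetilde Y$'' via an axis of some $k \in K$ is not spelled out or obviously correct. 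Notably, the paper's proof of this lemma uses no superconvexity at all.

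The paper's argument is much more direct and avoids the fiber-product geometry entirely. From a point $(y,y') \in \widehat Y$ with $y \neq y'$ both mapping to $x$, it takes a path $\kappa$ from $y$ to $y'$ in $Y$ and observes that $[\kappa] \in \pi_1 X$ lies in $\Stab(\widetilde Y)$ but not in $\pi_1 Y$ (since the based lift of $\varphi(\kappa)$ to $Y$ is not closed). Thus $F := \pi_1 Y$ is a proper subgroup of $\bar F := \langle F, [\kappa]\rangle \leq \Stab(\widetilde Y)$, and $Y = F\backslash \widetilde Y \to \bar F \backslash \widetilde Y =: \bar Y$ is a proper finite covering (finite because $Y$ is compact). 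The contradiction is simply that $\chi(Y) = -1$ cannot equal $d\cdot\chi(\bar Y)$ for any integer $d > 1$. The rank-$2$ hypothesis is the whole engine here. By comparison, your Nielsen--Schreier plus cube-counting finish reproduces this contradiction in a more roundabout way, and only \emph{after} the finite-index step that is actually the crux and that you have not justified.
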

\begin{proof}
Suppose $\widehat Y$ is a non-diagonal component of $Y\otimes_X Y$
that is a finite cover of $Y$.
Let  $x$ the basepoint of $X$, and let $y$ be a point of $Y$ mapping to $x$.
Let $(y,y')$ be a point of $\widehat Y$ that left-projects to $y$.
Since $(y,y')$ is not in the diagonal, we see that $y'\neq y$.
However, both $y$ and $y'$ project to $x$.
Let $\kappa$ be a path in $Y$ from $y$ to $y'$.
Then $[\kappa]\in \pi_1X$ stabilizes $\widetilde Y$,
but $[\kappa]\notin \pi_1Y$ since its based lift to $Y$ is not closed.
Thus $F\subsetneq \bar F= \langle F, [\kappa]\rangle$.
Let $\bar Y=\bar F \backslash \widetilde Y$.
Then there is a proper covering map $Y\rightarrow \bar Y$.
Let $d$ be the degree of this proper covering map, and note $1<d$ by properness
and $d<\infty$ since $Y$ is compact.
However, $-1 = \euler(Y)= d\cdot\euler(\bar Y) \neq -1$, which is impossible.
\end{proof}

\begin{prop}\label{prop: no conj}
    Let $Y\to X$ be a superconvex rank~$2$ pseudograph.
    There exists a local isometry $W\to Y$ where $W$ is a superconvex rank~$2$ pseudograph, such that no nontrivial element of $\pi_1W$ is conjugate to an element in $\pi_1$ of a non-diagonal component of $Y\otimes_X Y$. 
    
    Moreover, we can assume that $\pi_1W$ is malnormal in $\pi_1 Y$.
\end{prop}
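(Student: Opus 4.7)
The plan is to reduce the proposition to a question about the rank-$2$ free group $F = \pi_1 Y$, solve it algebraically in $F$, and then realize the resulting subgroup as the fundamental group of a compact subcomplex of a cover of $Y$.

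\medskip

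\noindent\emph{Reduction.} Since $Y$ is compact, $Y\otimes_X Y$ has finitely many components; let $C_1,\dots,C_n$ be the non-diagonal ones, and view $H_i = \pi_1 C_i$ as a subgroup of $F$ via projection (up to $F$-conjugacy). Each $H_i$ is finitely generated, and has infinite index in $F$ by Lemma~\ref{no cover}. Under the standard correspondence between components of the fiber product and $F$-double cosets in $\pi_1 X$, a nontrivial element $w\in\pi_1 W\leq F$ being conjugate in $\pi_1 X$ to some element of some $\pi_1 C_i$ amounts to $\pi_1 W$ meeting some $F$-translate $hH_ih^{-1}$ nontrivially. Hence it suffices to produce $F' \leq F$ satisfying: (a) $F' \cong F_2$; (b) $F'$ is malnormal in $F$; (c) $F' \cap h H_i h^{-1} = \{1\}$ for every $h\in F$ and every $i$.

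\medskip

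\noindent\emph{Algebraic construction.} Because each $H_i$ is finitely generated of infinite index in the rank-$2$ free group $F$, a Stallings-graph counting argument shows that the set $\mathcal{B} := \bigcup_{i,h} h H_i h^{-1}$ is ``thin'' in $F$: there exist cyclically reduced words $\alpha,\beta\in F$ of arbitrarily large length, no power of which lies in $\mathcal{B}$, whose Stallings loops share only short overlaps with the Stallings graphs of the $H_i$'s and with one another's cyclic conjugates. A standard small-cancellation argument in the free group $F$ then guarantees that $F' := \langle \alpha,\beta\rangle$ is free of rank~$2$ and malnormal in $F$, securing (a) and (b). The overlap condition ensures that every nontrivially reduced word over $\{\alpha^{\pm 1},\beta^{\pm 1}\}$ cannot fold into the Stallings graph of any $H_i$, giving (c).

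\medskip

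\noindent\emph{Geometric realization.} Since $F$ is free (hence hyperbolic) and $F'$ is finitely generated, $F'$ is quasiconvex in $F$. The result of \cite{HaglundGraphProduct} quoted just before this proposition then supplies a compact local isometry $W\to Y$ with $\pi_1 W = F'$. Each immersed hyperplane of $W$ factors through a contractible hyperplane of $Y$ and so is itself contractible; thus $W$ is a pseudograph, of rank~$2$ because $\pi_1 W \cong F_2$. Superconvexity of $W\to X$ follows from Remark~\ref{rem: superconvexity}. The main obstacle is the algebraic step: arranging that $F'$ is malnormal in $F$ while \emph{every} nontrivial element of $F'$ avoids \emph{all} $F$-conjugates of each $H_i$ requires the coordinated Stallings-graph / small-cancellation argument outlined above.
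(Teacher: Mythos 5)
Your proof follows the same strategy as the paper's: identify the fundamental groups $K_1,\dots,K_n$ of the non-diagonal components of $Y\otimes_X Y$ as finitely generated, infinite-index subgroups of $F=\pi_1 Y$ (via Lemma~\ref{no cover}), solve the resulting purely free-group problem, and then use the cocompact-core theorem of~\cite{HaglundGraphProduct} to realize the subgroup as a superconvex compact pseudograph $W\to Y\to X$. The paper isolates the free-group step as Lemma~\ref{lem:nonconjugate} and proves it by an explicit Stallings-graph recursion: one builds a single path $\sigma$ whose lift at every vertex of $\bigcup_i B_i$ exits into the tree part of the completed cover $C^+$, and then obtains malnormality via the $U'=UVUV^2\cdots UV^nU$, $V'=VUVU^2\cdots VU^nV$ trick; your write-up instead invokes a ``Stallings-graph counting'' thinness claim and a ``standard small-cancellation'' argument without spelling them out. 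That sketch is the crux of the proposition and is not quite routine as stated: thinness of $\mathcal B=\bigcup_{i,h}hH_ih^{-1}$ alone does not yield the required overlap condition for $\alpha,\beta$, and ``cannot fold into the Stallings graph'' must be upgraded to the statement that no cyclic conjugate of any nontrivial element of $\langle\alpha,\beta\rangle$ reads a closed cycle in any $\Gamma_i$ -- which is exactly what the paper's vertex-by-vertex recursion is designed to force. So the route is the same, but the one place where real work is needed is deferred; the paper's Lemma~\ref{lem:nonconjugate} is precisely the missing justification.
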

\begin{proof}
Let $K_1,\ldots, K_m$ be the fundamental groups of the non-diagonal components of $Y \otimes_X Y$.
Each $K_i$ is finitely generated by compactness.
By Lemma~\ref{no cover}, no non-diagonal component of $Y \otimes_X Y$ is a finite cover of $Y$.
Thus each $K_i$ is an infinite index subgroup of the free group $\pi_1 Y$. Let $H\subset \pi_1 Y$ be the 
subgroup provided 
by Lemma~\ref{lem:nonconjugate}.
Let $W\rightarrow Y$ be a local-isometry with $W$ compact such that $\pi_1W=H$. Then $W\rightarrow Y$ satisfies the statement of the lemma.
\end{proof}

Above we used the following statement about 
subgroups of free groups. This was also noted in \cite[Thm~D]{Kapovich99}.

\begin{lem}[A free group lemma]\label{lem:nonconjugate}
    Let $F$ be a free group on at least two generators, and let $K_1, \dots, K_n$ be finitely generated infinite index subgroups. There exists a subgroup $H\subseteq F$ isomorphic to $F_2$ such that no nontrivial element of $H$ is conjugate to an element of $\cup_i K_i$.

    Moreover, we can assume $H$ is malnormal in $F$.
    \end{lem}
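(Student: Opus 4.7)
The plan is to translate ``conjugate into $K_i$'' into an avoidance condition on closed walks in the Stallings graphs of the $K_i$, and then construct $u,v \in F$ whose syllable structure precludes such walks for all reduced products in $\langle u, v\rangle$, with malnormality arising from a further genericity condition.

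For each $K_i$ let $\Gamma_i \to R_F$ be the based Stallings immersion into the rose of $F$. Finite generation of $K_i$ makes $\Gamma_i$ finite, and infinite index of $K_i$ in $F$ makes this immersion non-covering, so some vertex of $\Gamma_i$ is missing a half-edge. A cyclically reduced $w \in F$ is conjugate into $K_i$ iff $w$ labels a reduced closed walk in $\Gamma_i$ based at some vertex. Fix two free generators $a,b$ of $F$ and let $N$ be a uniform bound on $|V(\Gamma_i)|$. Since each immersion is locally injective, the subgraphs of $a$- and $b$-labeled edges in each $\Gamma_i$ are disjoint unions of directed paths and cycles of length $\le N$.

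I would choose cyclically reduced words $u, v$ in the letters $a,b$ whose consecutive $a$- and $b$-syllable lengths are all $>N$, pairwise distinct, and pairwise incommensurable modulo every cycle-length occurring in any $a$- or $b$-subgraph of any $\Gamma_i$, and so that the modified syllable lengths $\ell_i \pm \ell_j$ arising from possible cancellation at a $u^{\pm 1}v^{\pm 1}$ junction also exceed $N$ and satisfy the same incommensurability. A ping-pong argument using the two-letter syllable structure then gives $H = \langle u, v\rangle \cong F_2$. For any nontrivial reduced product $w$ of $u^{\pm 1}, v^{\pm 1}$, the cyclic reduction of $w$ still contains at least one $a$- or $b$-syllable of length $>N$; tracing such a syllable along a reduced closed walk in $\Gamma_i$ would force the walk to traverse an $a$- or $b$-cycle of length $\le N$ whose period divides the syllable length, contradicting incommensurability. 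Hence no nontrivial $w \in H$ is conjugate into any $K_i$.

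For malnormality of $H$ in $F$, I would use that (via the Stallings--Gersten correspondence between intersections and fiber products) $H$ is malnormal iff every non-diagonal component of $\Gamma_H \otimes_{R_F} \Gamma_H$ has trivial fundamental group, where $\Gamma_H$ is the Stallings graph of $H$. The same syllable-length genericity used above can be strengthened so that any off-diagonal matching of two immersions of $\Gamma_H$ into $\widetilde{R_F}$ covers at most a short subpath, which precludes nontrivial non-diagonal components. Alternatively, I would iterate the construction: apply the same procedure inside $H$ to obtain $H' = \langle u', v'\rangle \le H$ of rank $2$ whose non-diagonal fiber-product components are trivial (by an argument analogous to Lemma~\ref{no cover}); since no-conjugation-into-$K_i$ is inherited by subgroups, $H'$ has both desired properties.

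The main obstacle is the combinatorial bookkeeping needed to simultaneously ensure (i) syllable-length incompatibility with all cycle-lengths across the finite collection $\{\Gamma_i\}$, (ii) that this incompatibility survives cancellation and cyclic reduction of arbitrary reduced products in $\langle u, v\rangle$, and (iii) triviality of non-diagonal fiber-product components for malnormality. Each condition is an open condition on the choice of large syllable lengths, so all three can be achieved in stages, choosing each new exponent both much larger than and arithmetically generic with respect to all previously chosen lengths and all cycle-lengths in $\bigsqcup_i \Gamma_i$.
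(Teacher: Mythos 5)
Your Stallings-graph setup is correct, and the reduction to ``cyclically reduced $w$ is conjugate into $K_i$ iff $w$ labels a reduced closed walk in $\Gamma_i$'' is exactly the right framing, but the key step of your argument is false. You claim that tracing an $a$-syllable $a^\ell$ with $\ell > N$ ``would force the walk to traverse an $a$-cycle of length $\le N$ whose period divides the syllable length.'' Once the walk enters an $a$-cycle of length $c \le N$ it can wind around any number of times; the endpoint is just the starting vertex shifted by $\ell \bmod c$, with no divisibility condition on $\ell$. The actual constraint for a closed walk is a simultaneous congruence involving the offsets between junction vertices across all syllables, and your incommensurability hypotheses do not control it. Concretely: take $\Gamma_i$ with vertices $p,q,r,s,t$, an $a$-cycle $p\to q\to r\to p$ and a $b$-cycle $p\to q\to r\to s\to t\to p$. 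This is folded, connected, and not a cover (so $K_i = \pi_1\Gamma_i$ has infinite index), with $N=5$ and cycle lengths $3,5$. The word $u=a^{101}b^{98}$ has both syllable lengths exceeding $N$, distinct from each other mod $3$ and mod $5$, and coprime to both cycle lengths — yet $101\equiv 2\pmod 3$ and $2+98\equiv 0\pmod 5$, so $u$ labels the closed walk $p \xrightarrow{\,a^{101}\,} r \xrightarrow{\,b^{98}\,} p$ and is conjugate into $K_i$. So a putative generator satisfying all your conditions already fails, before products are even considered, and it is not clear how to strengthen the arithmetic conditions to survive the simultaneous congruences across all $\Gamma_i$ and all products in $\langle u,v\rangle$.

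The paper avoids syllable arithmetic entirely and exploits the one structural consequence of infinite index — each $\Gamma_i$ has a vertex missing a half-edge. Setting $C=\bigcup_i \Gamma_i$ and completing it to a cover $C^+$ by attaching trees, one builds an immersed closed word $\sigma$ by a recursion that, for each vertex $v_m$ of $C$ in turn, extends $\sigma$ so that its lift starting at $v_m$ eventually traverses a tree edge (``exits $C$''). Then no cyclic conjugate of $\sigma$, nor of any nontrivial element of the immersed wedge $\sigma_1\vee\sigma_2$, can label a closed walk inside any $\Gamma_i$, because as soon as the walk reads a full $\sigma_j^{\pm 1}$ from some vertex it is forced out of $C$. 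That argument is local and robust precisely because it never needs to compare syllable lengths with cycle lengths; you might salvage your approach by replacing the incommensurability conditions with this ``exit every $\Gamma_i$ from every vertex'' requirement. Your malnormality discussion is fine modulo the main gap; the paper's specific device is the small-cancellation family $U'=UVUV^2\cdots UV^nU$, $V'=VUVU^2\cdots VU^nV$, which is close in spirit to the iteration you sketch.
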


We will use the graphical viewpoint on subgroups of free groups
popularized by Stallings \cite{Stallings83}.

\begin{proof}
Regard $F$ as $\pi_1B$ where $B$ is a bouquet of circles.
For each $K_i$ let $B_i\rightarrow B$ be an immersion of a finite based graph
with $\pi_1B_i$ mapping to $K_i$.

We will produce two immersed cycles $\sigma_1\rightarrow B$
and $\sigma_2\rightarrow B$,
such that $\sigma_1\vee \sigma_2$ maps to $B$ by an immersion,
and such that the based path $\sigma_j\rightarrow B$ does not have a lift to $B_i$ for any $i,j$.
The result follows letting $H=\sigma_1\vee \sigma_2$.

It suffices to produce one such path $\sigma$,
since we can precompose and postcompose it with edges to produce another immersed cycle with the same property, and such that their wedge immerses in $B$.

Let $C=\cup_i B_i$.
Let $\{v_1,\ldots, v_s\}$ be the vertices of $C$.
Let $C^+\rightarrow B$ be the covering map obtained by adding finitely many trees to $C$.
A path $\mu$ \emph{exits} $C$, if $\mu$ traverses an edge of one of these trees.

We produce $\sigma$ through the following recursion:
Let $\mu_0\rightarrow B$ be the trivial path.
For each $0\leq m \leq s-1$,
consider the lift of $\mu_m$ at $v_m$, and let $\lambda_{m+1}$
be a (possibly trivial) path such that $\mu_{m+1}:=\mu_m\lambda_{m+1}$ exits $C$.
The path $\sigma=\mu_s$ has the desired property by construction.

For the malnormality, suppose $U,V$ are closed paths in $B$
such that $U\vee V$ immerses in $B$.
Let $U'=UVUV^2UV^3\cdots U V^nU$
and $V'=VUVU^2\cdots VU^n V $.
Then $U'\vee V'$ immerses in $B$,
and $U'\vee V' \rightarrow B$ is small-cancellation and hence malnormal
for $n\gg 0$. See~\cite[Thm~2.14]{WisePositive}.
\end{proof}

\subsection{Induced small-cancellation conditions}\label{sec: induced small-cancellation}

\begin{prop}\label{prop:small-cancellation_correspondence}
Let $W\to X$ be a superconvex pseudograph such that the non-diagonal components of $W \otimes_X W$ are contractible (equivalently, $\pi_1W$ is malnormal in $\pi_1X$).

Let $B$ be a graph and  $B\rightarrow W$ be a combinatorial homotopy equivalence.

There exist $\kappa, \epsilon>0$, and $\beta = \beta(\kappa, \epsilon)>0$, such that for any immersion
$A\rightarrow B$, there exists a local isometry $Z\rightarrow W $ such that 
\begin{itemize}
    \item $\sys(Z)\geq \frac 1 \kappa \sys(A)-\epsilon$,
    \item if $\langle B \mid A \rangle$ is $C'(\alpha)$ then
$\langle X\mid Z\rangle$ is $C'(\beta\alpha)$.
\end{itemize}
\end{prop}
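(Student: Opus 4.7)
The plan is to construct $Z\to W$ as a compact convex core determined by $\pi_1 A$, and then transfer small-cancellation bounds using three ingredients: the bilipschitz nature of a combinatorial homotopy equivalence between $A$ and $Z$ (built from $B\to W$), the contractibility of non-diagonal components of $W\otimes_X W$ provided by malnormality, and the uniform bound on wall-pieces from Lemma~\ref{lem:bounded wall pieces}.

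First I would construct $Z$ as follows. The immersion $A\to B$ gives a finitely generated subgroup $H\leq \pi_1B$. The homotopy equivalence $B\to W$ identifies $\pi_1B$ with the free group $\pi_1W$, so $H$ is quasiconvex, and the cover $\widehat W_H\to W$ has a compact convex core $Z$ with $\pi_1Z=H$. Then $Z\to W$ and hence $Z\to X$ are local isometries. Since $A$ and $Z$ are both $K(H,1)$'s, composing $A\to B\to W$ with a combinatorial homotopy inverse of $B\to W$ and lifting to $Z$ yields a combinatorial homotopy equivalence $A\simeq Z$ with bilipschitz constants $\kappa_0,\epsilon_0$ on combinatorial lengths. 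This immediately yields $\sys(Z)\geq \tfrac{1}{\kappa}\sys(A)-\epsilon$ with $\kappa:=\kappa_0$ and $\epsilon\geq\epsilon_0$.

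The main step is bounding the pieces of $Z$ in $\langle X\mid Z\rangle$. For cone-pieces, consider distinct elevations $\widetilde Z_0,\widetilde Z_1\subseteq \widetilde X$ contained in elevations $\widetilde W_0,\widetilde W_1$ of $W$. When $\widetilde W_0=\widetilde W_1$, the intersection $\widetilde Z_0\cap \widetilde Z_1$ corresponds, through the bilipschitz homotopy equivalence, to a cone-piece of $A$ in $\langle B\mid A\rangle$, so $\diam(\widetilde Z_0\cap \widetilde Z_1)\leq \kappa\alpha\sys(A)+\epsilon$ whenever $\langle B\mid A\rangle$ is $C'(\alpha)$. When $\widetilde W_0\neq \widetilde W_1$, the intersection lies in $\widetilde W_0\cap \widetilde W_1$, a lift of a non-diagonal component of $W\otimes_X W$ which is contractible by malnormality, so of uniformly bounded diameter $D$ since $W$ is compact. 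For a wall-piece $N(h)\cap \widetilde Z\subseteq N(h)\cap \widetilde W$, either $h$ remains disjoint from $\widetilde W$ and the intersection is a wall-piece of $W\to X$ bounded by Lemma~\ref{lem:bounded wall pieces}, or $h$ crosses $\widetilde W$ and the intersection lies inside the carrier of a hyperplane of $\widetilde W$, bounded because hyperplanes of the pseudograph $W$ are compact and contractible.

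Absorbing these uniform constants into an enlarged $\epsilon$, I obtain $\diam(P)\leq \kappa\,\diam(P_A)+\epsilon$ for every piece $P$ of $Z$, where $P_A$ is the corresponding piece of $A$ (taken trivially when $P$ arises from the bounded cases above). Combined with the systole bound,
\[
\frac{\diam(P)}{\sys(Z)} \;\leq\; \frac{\kappa\alpha\,\sys(A)+\epsilon}{\tfrac{1}{\kappa}\sys(A)-\epsilon}\;\leq\; \beta\alpha
\]
for a suitable $\beta=\beta(\kappa,\epsilon)$, since the $C'(\alpha)$ hypothesis on $\langle B\mid A\rangle$ together with the presence of any nontrivial piece forces $\sys(A)\geq \alpha^{-1}\diam(P_A)$, rendering the additive $\epsilon$ negligible against the principal term. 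The main obstacle will be making the correspondence between cone-pieces of $A$ in $B$ and cone-pieces of $Z$ in $W$ precise under the homotopy equivalence, and verifying that the same bilipschitz distortion simultaneously controls the systole bound and the piece-diameter bound.
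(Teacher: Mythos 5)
Your overall strategy matches the paper's: construct $Z$ from the quasi-isometry lift of $B\to W$ (equivalently as a compact convex core for $\pi_1A$), split cone-pieces into the diagonal and non-diagonal cases using malnormality, invoke Lemma~\ref{lem:bounded wall pieces} for wall-pieces, and then assemble the final inequality. But the step you yourself flag as ``the main obstacle'' is a genuine gap, not merely an omitted routine verification, and the inequality you rely on goes in the wrong direction.

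Here is the difficulty. For a diagonal cone-piece you want an \emph{upper} bound on $\diam(\widetilde Z\cap g\widetilde Z)$ in terms of $\diam(\widetilde A\cap g\widetilde A)$. A bilipschitz (or quasi-isometric) correspondence $\phi:\widetilde A\to\widetilde Z$ only gives that $\phi(\widetilde A\cap g\widetilde A)$ is comparably large \emph{inside} $\widetilde Z\cap g\widetilde Z$; it does not by itself prevent $\widetilde Z\cap g\widetilde Z$ from being much bigger, since $\widetilde Z$ is the convex hull of $\phi(\widetilde A)$ rather than the image itself. The paper supplies the missing argument: pick $w_1,w_2$ realizing $\diam(\widetilde Z\cap g\widetilde Z)$, find $a_i\in\widetilde A$ and $a_i'\in g\widetilde A$ with $\phi(a_i)$, $\phi(a_i')$ both $r$-close to $w_i$ (using $\widetilde Z\subseteq \neb_r(\phi(\widetilde A))$), then exploit that $\widetilde B$ is a \emph{tree}: the geodesics $[a_1,a_2]\subseteq\widetilde A$ and $[a_1',a_2']\subseteq g\widetilde A$ have close endpoint pairs, hence overlap along a long subsegment lying in $\widetilde A\cap g\widetilde A$. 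This yields $\diam(\widetilde A\cap g\widetilde A)\geq\kappa'\diam(\widetilde Z\cap g\widetilde Z)-\epsilon'$, which is the needed direction. Without this tree-geometry step your ``correspondence'' is asserted but not established, and the final estimate $\diam(P)\leq\kappa\,\diam(P_A)+\epsilon$ does not follow from what you have written.

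Everything else is fine and agrees with the paper: the construction of $Z$, the treatment of non-diagonal cone-pieces via contractibility (hence compactness) of the non-diagonal components of $W\otimes_X W$, the wall-piece bound, and the absorption of additive constants into $\beta$ using a lower bound on $\sys(A)$. To repair the proof you should replace the hand-wave by the explicit argument in $\widetilde B$: take the diameter-realizing points in $\widetilde W$, transport them back along the quasi-isometry, and use the uniqueness/fellow-traveling of geodesics in the tree $\widetilde B$ to produce the long overlap in $\widetilde A\cap g\widetilde A$.
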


We use the notation $\neb_r(S)$ for the closed $r$-neighborhood of $S$.

\begin{proof}
The map $B\rightarrow W$ lifts to a $(\kappa,\epsilon)$-quasiisometry $\phi: \widetilde B\rightarrow \widetilde W$
for some $\kappa,\epsilon>0$.
The correspondence between (compact) immersions $A\rightarrow B$ and (compact) immersions $Z\rightarrow W$
arises by letting  $Z$ be the quotient of $\widetilde Z$ by the action of $\pi_1A$, where $\widetilde Z $ is the convex hull of $\phi(\widetilde A)$.
We note that there is a uniform constant~$r$ with $\widetilde Z\subset \neb_r(\phi(\widetilde A))$.
We refer to  \cite[Thm~2.28]{HaglundGraphProduct} and \cite[Prop~3.3]{SageevWiseCores},
and note that the constant $\mu$ in the proof of the second reference is uniform since $\phi(\widetilde A)$ is uniformly quasiconvex. 

There is a uniform linear relationship between the systole of $Z$ and the systole of $A$, namely $\frac {1}{\kappa} \sys(A)- \epsilon \leq \sys (Z)\leq \kappa\sys(A)+\epsilon$.
By Lemma~\ref{lem:bounded wall pieces} there is a uniform bound on diameters of wall-pieces of $Z\rightarrow X$.

A cone-piece $U$ in $Z\rightarrow X$
corresponds to a pair of maps $Z\leftarrow U \rightarrow Z$
which composes to a pair of maps $W\leftarrow Z \leftarrow U \rightarrow Z\rightarrow W$.
The universal property of the fiber-product provides a lift of $U$ to $W \otimes_X W$.
The non-diagonal components of $W\otimes_X W $ have
uniformly bounded diameter, since they are compact and contractible by assumption. Thus there is a uniform bound on the diameter of pieces $U$ mapping to the non-diagonal components of $W\otimes_X W $.

Suppose $U$ maps to the diagonal component of $W\otimes_X W$ and so $U$ lifts to a component of $Z\otimes_W Z$.
Then $U$ is a piece in $Z\rightarrow W$, i.e.\ a component of the intersection between translates $g\widetilde Z$ and $\widetilde Z$ in $\widetilde W$. 
There exists a corresponding piece $V$ in $A\to B$, which is the intersection between $g\widetilde A$ and $\widetilde A$ in $\widetilde B$. We now claim that $\diam(V)\geq \kappa'\diam(U) - \epsilon'$ for some $\kappa', \epsilon'>0$.

For $a_1,a_2\in \widetilde A$, if $\dist(\phi(a_1),\phi(a_2)) <r$
then $\dist(a_1,a_2)< \kappa r + \epsilon$.
Suppose $\diam(\widetilde Z\cap g\widetilde Z)=M$,
then there are points $w_1,w_2$ inside with $\dist(w_1,w_2)=M$.
Let $a_i$ be a point in $\widetilde A$
with $\dist(\phi(a_i),w_i) < r$, and let $a_i'$ be a point in $g\widetilde A$ such that $\dist(\phi(a_i'), w_i)<r$. In particular, $\dist(a_i, a_i')\leq \kappa(2r)+\epsilon \kappa$. 
On the other hand $\dist(\phi(a_1),\phi(a_2))\geq M-2r$, so $\dist(a_1,a_2)\geq \frac{1}{\kappa}(M-2r)-\frac{\epsilon}{\kappa}$. 

Note that $a_1, a_2$ belong to the convex subset $\widetilde A$ of the tree $\widetilde B$, and so the unique path $[a_1, a_2]\subseteq \widetilde A$. Similarly $[a_1', a_2']\subseteq g\widetilde A$. Since points $a_i,a_i'$ are close for both $i=1,2$, and points $a_1,a_2$ are far away, it follows that there exist points $b_i\in [a_i, a_i']$ with $[b_1, b_2]\subseteq \widetilde A\cap g\widetilde A$. 
We have $\dist(b_i, a_i)\leq \kappa(2r)+\epsilon \kappa$, and therefore $$\dist(b_1, b_2) \geq \dist(a_1, a_2) - \dist(b_1, a_1)- \dist(b_2, a_2) \geq \frac{1}{\kappa}(M-2r)-\frac{\epsilon}{\kappa} - 2 (\kappa(2r)+\epsilon \kappa).$$
Thus $\diam(V)\geq \kappa'\diam(U) - \epsilon'$ for $\kappa' = \frac 1  \kappa$, and $\epsilon' = \frac{2r+\epsilon}{\kappa} + 2\kappa(2r+\epsilon)$.

To summarize, every piece $U$ in $W$ either has diameter bounded by some universal constant, or satisfies $$\diam(U)\leq \frac{1}{\kappa'} \diam(V) +\frac{\epsilon'}{\kappa'} \leq \frac{1}{\kappa'} \alpha\sys(A) +\frac{\epsilon'}{K'} \leq \frac{1}{\kappa'} (\alpha \kappa\sys(W)+\epsilon) +\frac{\epsilon'}{\kappa'}.$$ In either case, we get that $\diam(U)\leq \beta\alpha \sys(W)$ for some $\beta>0$ which depends on $W\to X$ and $B\to W$ only.
\end{proof}

\begin{rem}\label{rem: small cancellation in graphs}
    For each $\alpha>0$, there exists  $A\to B$ where $\langle B \mid A \rangle$ is $C'(\alpha)$ and $\rank(A)=2$. For instance, if $B$ is  a bouquet of circles labelled by $a$ and $b$,  choose $A$ associated to $\langle aba^2b \cdots a^mb, bab^2a\cdots b^na \rangle$ for  sufficiently large $m,n$.
\end{rem}

Proposition~\ref{prop:small-cancellation_correspondence} and Remark~\ref{rem: small cancellation in graphs} imply the following.
\begin{cor}\label{cor: small cancellation} Let $W\to X$ be a superconvex pseudograph where the non-diagonal components of $W \otimes_X W$ are contractible.
For any $\alpha>0$ there is a local isometry of superconvex rank~$2$ pseudograph $Z\to W \to X$ such that $\langle X\mid Z\rangle$ is $C'(\alpha)$. 
Moreover, for any $R>0$ we can choose $Z$ so that $\sys(Z)\geq R$.
\end{cor}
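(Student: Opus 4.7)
The plan is to combine Proposition~\ref{prop:small-cancellation_correspondence} with Remark~\ref{rem: small cancellation in graphs}, using the hypotheses on $W$ to ensure all the ingredients are available. Since $W$ is a rank~$2$ pseudograph, $\pi_1W\cong F_2$, so we may pick a combinatorial homotopy equivalence $B\to W$ where $B$ is a wedge of two circles. The hypothesis that the non-diagonal components of $W\otimes_XW$ are contractible is exactly what Proposition~\ref{prop:small-cancellation_correspondence} requires, so we may apply it to obtain constants $\kappa,\epsilon,\beta>0$ that depend only on $W\to X$ and $B\to W$.

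Given $\alpha>0$ and $R>0$, set $\alpha'=\alpha/\beta$. By Remark~\ref{rem: small cancellation in graphs} (taking the parameters $m,n$ large enough), there is an immersion $A\to B$ with $\rank(A)=2$, such that $\langle B\mid A\rangle$ satisfies $C'(\alpha')$, and $\sys(A)\geq\kappa(R+\epsilon)$. Feeding this $A\to B$ into Proposition~\ref{prop:small-cancellation_correspondence} produces a local isometry $Z\to W$ with $\sys(Z)\geq\tfrac{1}{\kappa}\sys(A)-\epsilon\geq R$ and with $\langle X\mid Z\rangle$ satisfying $C'(\beta\alpha')=C'(\alpha)$. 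Composing gives the required local isometry $Z\to W\to X$.

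It remains to verify that $Z$ is a superconvex rank~$2$ pseudograph. The construction of $Z$ in the proof of Proposition~\ref{prop:small-cancellation_correspondence} sets $Z=\widetilde Z/\pi_1A$, so $\pi_1Z\cong\pi_1A\cong F_2$, giving rank~$2$. Superconvexity of $Z\to X$ is provided directly by Lemma~\ref{lem:bounded wall pieces} applied to the local isometry $Z\to W$ into the superconvex pseudograph $W\to X$. For the pseudograph property, note that each hyperplane of $Z$ maps via a local isometry into a hyperplane of $W$; since the latter is contractible (in particular a tree), any local isometry from a nonpositively curved cube complex into it is $\pi_1$-injective with trivial image in $\pi_1$, so the hyperplane of $Z$ is simply connected and hence contractible.

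There is essentially no obstacle beyond bookkeeping: Proposition~\ref{prop:small-cancellation_correspondence} already packages the correspondence between small-cancellation data on the graph side and on the cube-complex side, Remark~\ref{rem: small cancellation in graphs} supplies arbitrarily good graphical input at rank~$2$, and Lemma~\ref{lem:bounded wall pieces} handles superconvexity and pseudograph-hood downstream. The one subtlety worth stating explicitly is the need to choose the parameters in Remark~\ref{rem: small cancellation in graphs} large enough to simultaneously absorb the loss $\kappa,\epsilon$ in the systole estimate and the scaling factor $\beta$ in the small-cancellation constant.
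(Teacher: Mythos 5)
Your proof is correct and follows the paper's approach verbatim: the paper's entire "proof" of this corollary is the single sentence preceding it, stating that Proposition~\ref{prop:small-cancellation_correspondence} and Remark~\ref{rem: small cancellation in graphs} imply it, and you have correctly filled in the bookkeeping (set $\alpha'=\alpha/\beta$, choose $m,n$ large enough to get both $C'(\alpha')$ and $\sys(A)\ge\kappa(R+\epsilon)$, then read off the conclusions). One small slip: a hyperplane of the pseudograph $W$ is contractible but need not be a tree unless $\dim W=2$; your parenthetical "(in particular a tree)" is unnecessary and wrong in general, though the surrounding argument ($\pi_1$-injectivity of the local isometry into a simply connected target, hence simple connectivity and contractibility of the hyperplane of $Z$) is fine without it. You also tacitly assume $\rank(W)=2$ when choosing $B$ as a wedge of two circles; the corollary's hypotheses do not say this, but it matches the intended usage and the argument adapts to higher rank by picking two of the generators, as Remark~\ref{rem: superconvexity} already implicitly does.
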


To finish this section we note that Proposition~\ref{prop:small-cancellation_correspondence} does not assume that $A$ and $Z$ are connected. Thus we can thus deduce the following improvement, by choosing appropriate $A_1, A_2\to B$ as in Remark~\ref{rem: small cancellation in graphs} whose cancellations with one another are small.

\begin{cor}\label{cor:2 pseudographs}
    Let $W\to X$ be a superconvex pseudograph such that the non-diagonal components of $W \otimes_X W$ are contractible.
    Then for every $\alpha, R>0$ there exists a pair of superconvex rank~$2$ pseudographs $Z_1\to W\to X$ and $Z_2\to W \to X$ with $\sys(Z_i)\geq R$ and $\langle X\ \mid Z_1, Z_2\rangle$ satisfying $C'(\alpha)$.
\end{cor}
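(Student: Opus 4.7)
The plan is to generalize the derivation of Corollary~\ref{cor: small cancellation}: the only place where connectedness intervenes is in the choice of $A\to B$ in Remark~\ref{rem: small cancellation in graphs}, so I would upgrade that remark to produce two rank~$2$ immersions $A_1, A_2\to B$ whose common presentation $\langle B\mid A_1, A_2\rangle$ is $C'(\alpha)$ with arbitrarily large systole on each component. Then Proposition~\ref{prop:small-cancellation_correspondence} applied to the disjoint union $A = A_1\sqcup A_2\to B$ outputs $Z = Z_1\sqcup Z_2\to W$ with the required small-cancellation and systole bounds for $\langle X\mid Z_1, Z_2\rangle$.

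For the enhanced graphical construction, let $B$ be the bouquet of two circles labelled $a$ and $b$, and choose a large integer $s$ together with $4s$ pairwise distinct positive integers $N_{i,j,k}$ indexed by $i,j\in\{1,2\}$ and $k\in\{1,\dots,s\}$. Form four cyclic words
\[
w_{i,j} \;=\; (a\,b^{N_{i,j,1}})(a\,b^{N_{i,j,2}})\cdots (a\,b^{N_{i,j,s}}),
\]
and let $A_i$ be the immersed wedge of the two cycles labelled $w_{i,1}$ and $w_{i,2}$, so $\pi_1 A_i\cong F_2$. Any piece between two (possibly equal) cyclic conjugates of the $w_{i,j}$ is contained in a subword of the form $a\,b^N a$ and hence has length at most $\max_{i,j,k} N_{i,j,k}+2$, while $|w_{i,j}|$ grows linearly in $s$. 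Choosing $s$ and the $N_{i,j,k}$ suitably then yields $C'(\alpha)$ with $\sys(A_i)$ as large as we wish.

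Feeding $A_1\sqcup A_2\to B$ into Proposition~\ref{prop:small-cancellation_correspondence} produces $Z_1\sqcup Z_2\to W\to X$, with $\sys(Z_i)\geq \tfrac{1}{\kappa}\sys(A_i)-\epsilon\geq R$ and $\langle X\mid Z_1, Z_2\rangle$ satisfying $C'(\beta\alpha)$; we shrink $\alpha$ at the graphical stage to absorb $\beta$. Each $Z_i$ has $\pi_1 Z_i\cong \pi_1 A_i\cong F_2$ because its universal cover is the convex hull of a quasi-isometric image of $\widetilde{A_i}$, and Lemma~\ref{lem:bounded wall pieces} together with Remark~\ref{rem: superconvexity} verify that $Z_i\to X$ is a superconvex pseudograph.

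The step I expect to be the main obstacle is verifying that the piece-counting in the proof of Proposition~\ref{prop:small-cancellation_correspondence} handles \emph{cross-component} pieces between $Z_1$ and $Z_2$ in the same manner as intra-component pieces. Inspecting that proof, this should be immediate: cone-pieces between $Z_1$ and $Z_2$ correspond via the fiber product $W\otimes_X W$ (whose non-diagonal components are contractible by hypothesis) to pieces between $A_1$ and $A_2$ in $B$, up to uniformly bounded error, while wall-pieces are bounded uniformly by Lemma~\ref{lem:bounded wall pieces} regardless of which $Z_i$ they meet. The classical construction above ensures cross-component pieces in $B$ are short, and the remainder is a routine adaptation of Corollary~\ref{cor: small cancellation}.
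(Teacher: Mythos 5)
Your approach is essentially the paper's: the paper itself dispatches this corollary in one line by observing that Proposition~\ref{prop:small-cancellation_correspondence} applies to disconnected $A$ and $Z$, and then feeding in $A_1\sqcup A_2\to B$ with $A_1, A_2$ as in Remark~\ref{rem: small cancellation in graphs} chosen so that their mutual cancellations are small. Your last paragraph's concern about cross-component pieces is handled exactly as you surmise, since the diagonal/non-diagonal dichotomy in the proof of Proposition~\ref{prop:small-cancellation_correspondence} does not see component boundaries.

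Two small technical slips in your explicit graph construction are worth flagging, although neither affects the validity of the overall argument. First, the wedge of two circles labelled $w_{i,1}$ and $w_{i,2}$ at a single vertex does \emph{not} immerse in $B$ when both words begin with $a$ and end with $b$: the wedge vertex has two outgoing $a$-edges mapping to the same edge of $B$. You must either fold first (which still gives a rank~$2$ core, but changes the graph's shape and slightly alters the piece bookkeeping) or, more cleanly, choose the two words so that one begins with $a$ and ends with $b$ while the other begins with $b$ and ends with $a$, as in the paper's Remark~\ref{rem: small cancellation in graphs}. Second, the bound ``$\max N + 2$'' on piece length is slightly too optimistic: a common subword can start and end partway through $b$-blocks, so the correct bound is of the order of $2\max_{i,j,k}N_{i,j,k}+1$. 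Since $|w_{i,j}|$ grows quadratically in $s$ while $\max N$ grows linearly, the ratio still tends to $0$ and the conclusion is unaffected.
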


\subsection{Proof of  a proper $C'(\alpha)$ small-cancellation quotient}\label{sec: proof without cocompactness}
We are now ready to prove the first part of our main theorem.
\begin{thm}[Theorem~\ref{thm:main} without cubulation]\label{thm: main without cubulation}
     Let $X$ be a nonpositively curved cube complex that admits a local isometry of a rank~$2$ superconvex pseudograph. 
    Then for every $\alpha> 0$ there exists $Y\looparrowright X$ with $\pi_1 Y\neq 1$ such that $\langle X\mid Y\rangle$ is a cubical $C'(\alpha)$ small-cancellation presentation. 
\end{thm}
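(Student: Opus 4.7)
The strategy is to chain Proposition~\ref{prop: no conj} with Corollary~\ref{cor: small cancellation}. Starting from the given superconvex rank~$2$ pseudograph $Y\to X$, I would first invoke Proposition~\ref{prop: no conj} to produce a local isometry $W\to Y$ with $W$ a superconvex rank~$2$ pseudograph such that $\pi_1 W$ is malnormal in $\pi_1 Y$ and no nontrivial element of $\pi_1 W$ is conjugate (in $\pi_1 X$) into the fundamental group of any non-diagonal component of $Y\otimes_X Y$. The composition $W\to X$ is itself a superconvex pseudograph by Remark~\ref{rem: superconvexity}.

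The delicate intermediate step, which I expect to be the main obstacle, is upgrading the two conditions above to outright malnormality of $\pi_1 W$ in $\pi_1 X$, i.e.\ to contractibility of all non-diagonal components of $W\otimes_X W$ (the hypothesis needed to apply Corollary~\ref{cor: small cancellation}). Given $g\in \pi_1 X$ and a nontrivial $h\in g\pi_1 W g^{-1}\cap \pi_1 W$, I would split on whether $g\in \pi_1 Y$: in the first case, malnormality of $\pi_1 W$ in $\pi_1 Y$ yields a contradiction; in the second case, $h$ lies in $g\pi_1 Y g^{-1}\cap \pi_1 Y$, which is exactly the fundamental group of a non-diagonal component of $Y\otimes_X Y$, and the non-conjugacy clause of Proposition~\ref{prop: no conj} then forces $h=1$. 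The non-diagonal components of $W\otimes_X W$ are then nonpositively curved cube complexes with trivial fundamental group, and are therefore contractible.

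Finally, I would feed $W\to X$ into Corollary~\ref{cor: small cancellation}. For any prescribed $\alpha>0$, this produces a superconvex rank~$2$ pseudograph $Z\to W \to X$ with $\langle X\mid Z\rangle$ satisfying the $C'(\alpha)$ condition. Setting $Y:=Z$ in the statement concludes the proof: $Z$ is a rank~$2$ superconvex pseudograph mapping to $X$ by a local isometry, and $\pi_1 Z$ is a free group of rank~$2$, in particular nontrivial. The rest of Theorem~\ref{thm:main} (the cubulation, freeness, and cocompactness assertions) is deferred to Sections~\ref{sec: cubulating}--\ref{sec:cocompact} and plays no role in the present statement.
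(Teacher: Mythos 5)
Your proof is correct and follows the same route as the paper: Proposition~\ref{prop: no conj} followed by Corollary~\ref{cor: small cancellation}. The only difference is that you helpfully unpack the step the paper leaves terse -- the two-case argument (splitting on whether $g\in\pi_1 Y$) showing that the two properties from Proposition~\ref{prop: no conj} together imply contractibility of all non-diagonal components of $W\otimes_X W$; note that the conjugation in Proposition~\ref{prop: no conj} is actually within $\pi_1 Y$ rather than $\pi_1 X$ as you parenthetically wrote, but this is exactly what your Case~2 needs, so the argument stands.
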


\begin{proof}
Proposition~\ref{prop: no conj} provides a local isometry $W\rightarrow Y$ so that all non-contractible components of  $W \otimes_X W$ map to the diagonal component of $Y\otimes_X Y$.
Since we can choose $W$ such that $\pi_1 W$ is malnormal in $\pi_1 Y$, we can assume that all non-diagonal components of $W \otimes_X W$ are contractible.
By Corollary~\ref{cor: small cancellation} there is $Z\to W$ such that $\langle X\mid Z\rangle$ is a $C'(\alpha)$ small-cancellation quotient.
\end{proof}

 Recall that  by possibly passing to the $\pi_1 X$-invariant subcomplex of the cubical subdivision of $\widetilde X$, we can assume  $\pi_1 X$ acts essentially on $\widetilde X$ by \cite[Prop~3.5]{CapraceSageev2011}. Then  a rank~$2$ superconvex pseudograph $Y\rightarrow X$
 exists by Proposition~\ref{prop:the core}.
 
 Similarly as in Theorem~\ref{thm: main without cubulation}, we deduce a version of $SQ$-universality for the fundamental groups of non-product nonpositively curved cube complexes.

\begin{prop}[Subgroup Quotient Universal]\label{prop:squniversal}Fix $\alpha > 0$. There exists $R = R(\alpha)$ such that the following holds.
Suppose $X$ admits a local isometry from a rank~$2$ pseudograph $Z\rightarrow X$  such that
\begin{itemize}
    \item  wall pieces are uniformly bounded,
    \item non-diagonal components of $Z\otimes_X Z$ are contractible,
    \item $\sys(Z)\geq R$.
\end{itemize}
Then for every rank~$2$ group $H$,
there is a $C'(\alpha)$ cubical small-cancellation quotient
$X^*=\langle X \mid \widehat Z\rangle$ with
$H\hookrightarrow \pi_1X^*$ where $\widehat Z \to Z$ is a covering map. 
\end{prop}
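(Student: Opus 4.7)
The plan is to construct a suitable cover $\widehat{Z}\to Z$ and take $X^*=\langle X\mid \widehat{Z}\rangle$ as the desired cubical presentation. Since $H$ is $2$-generated, fix a surjection $\phi\colon F_{2}\twoheadrightarrow H$. Because $Z$ is a rank~$2$ pseudograph, $\pi_{1}Z\cong F_{2}$, so $\phi$ becomes a surjection $\pi_{1}Z\twoheadrightarrow H$ with kernel $K$; let $\widehat{Z}\to Z$ be the regular cover corresponding to $K$, whose deck group is $H$. Then $\widehat{Z}\to Z\to X$ is a local isometry, and $\widehat{Z}$ is itself a pseudograph since its immersed hyperplanes are covers of the contractible hyperplanes of $Z$.

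To verify $C'(\alpha)$ for $\langle X\mid \widehat{Z}\rangle$, first note that covering maps preserve combinatorial length of essential loops, so $\sys(\widehat{Z})\geq \sys(Z)\geq R$. Wall-pieces of $\widehat{Z}\to X$ are uniformly bounded by a constant $M'$, inherited from the hypothesis on wall-pieces of $Z\to X$ via Lemma~\ref{lem:bounded wall pieces}. For cone-pieces, the deck group $H$ of $\widehat{Z}\to Z$ lies inside $\Aut_{X}(\widehat{Z})$, so by the convention identifying elevations differing by an element of $\Aut_{X}(\widehat{Z})$, distinct elevations of $\widehat{Z}$ in $\widetilde{X}$ project to distinct elevations of $Z$ in $\widetilde{X}$. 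Their intersections therefore correspond to non-diagonal components of $Z\otimes_{X}Z$, which are contractible by hypothesis and hence have diameter bounded by some constant $M''$. Choosing $R:=R(\alpha)>(M'+M'')/\alpha$ from the start yields $\diam(P)\leq \alpha\sys(\widehat{Z})$ for every piece $P$ of $\langle X\mid \widehat{Z}\rangle$.

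To produce the embedding $H\hookrightarrow \pi_{1}X^{*}$, the plan is to consider the sub-cubical presentation $\langle Z\mid \widehat{Z}\rangle$, whose fundamental group is $\pi_{1}Z/\pi_{1}\widehat{Z}=H$; its pieces are a subcollection of those of $\langle X\mid \widehat{Z}\rangle$, so it is $C'(\alpha)$ as well. The local isometry $Z\to X$ induces a natural map of coned-off spaces $\langle Z\mid \widehat{Z}\rangle^{*}\to X^{*}$, and the task reduces to showing that this map is $\pi_{1}$-injective. I expect the main work to lie in applying a cubical Greendlinger-type lemma from \cite{WiseBook, Jankiewicz2017}: a minimal-area disc diagram $D\to X^{*}$ for a null-homotopic loop $w\subset Z$ must admit a shell whose outer arc is a long sub-path of $\partial D\subseteq Z$ and whose inner arc lies in an elevation of $\widehat{Z}$; this allows one to replace $w$ by a shorter loop differing from $w$ by an element of $\pi_{1}\widehat{Z}$, and induction on the area of $D$ then forces $[w]\in \pi_{1}\widehat{Z}=K$. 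This $\pi_{1}$-injectivity step is the main obstacle; the systole estimate and piece-size bounds are otherwise direct consequences of the hypotheses on $Z\to X$.
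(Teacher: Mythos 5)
Your construction of $\widehat Z$ as the regular cover with deck group $H$ and the verification that $\langle X\mid\widehat Z\rangle$ is $C'(\alpha)$ match the paper. Where you diverge is the argument that $H\hookrightarrow\pi_1X^*$: the paper's proof is a one-liner observing that $\widehat Z$ embeds in $\widetilde X^*$ (a standard consequence of the small-cancellation condition) and that every deck transformation of $\widehat Z\to Z$ extends to an automorphism of $\widetilde X^*\to X$, so $H\cong\Aut(\widehat Z\to Z)\subset\Aut(\widetilde X^*\to X)=\pi_1X^*$. You instead propose to run a disc-diagram/Greendlinger argument to show that $\langle Z\mid\widehat Z\rangle^*\to X^*$ is $\pi_1$-injective. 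Your route is valid in spirit and amounts to reproving a version of Theorem~\ref{thm:no_missing_shells_injective} in this situation; the cleanest implementation of it would invoke the induced-presentation machinery (the induced presentation of $X^*$ over $Z\to X$ is $\langle Z\mid Z\otimes_X\widehat Z\rangle$, whose extra components are contractible by hypothesis, so $\pi_1$ is still $H$, and liftable shells follow from Lemma~\ref{lem:liftable shells condition}) rather than a from-scratch Greendlinger induction. The trade-off is that the paper's automorphism-extension argument is essentially immediate once one knows that cones embed in $\widetilde X^*$, whereas your route requires checking the liftable-shells condition and the convexity input; what your route buys is a more explicit picture of why the injectivity holds. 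One small imprecision: the pieces of $\langle Z\mid\widehat Z\rangle$ are not literally a subcollection of those of $\langle X\mid\widehat Z\rangle$ (in fact $\langle Z\mid\widehat Z\rangle$ has essentially no pieces at all, since $\widehat Z\to Z$ is a cover and the unique elevation of $\widehat Z$ to $\widetilde Z$ is $\widetilde Z$ itself), but this does not affect the validity of the overall plan.
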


Compactness is not assumed for $\widehat Z$.
The existence of $Z$ is ensured by Lemma~\ref{lem:bounded wall pieces}, 
Proposition~\ref{prop: no conj}, and Corollary~\ref{cor: small cancellation}.
The same proof works for rank~$m$, but anyhow, every countable group is a subgroup of a quotient of a rank~$2$ free group.
\begin{proof}
Choose a regular cover $\widehat Z\rightarrow Z$ 
with  $H\cong \Aut(\widehat Z \to Z)$. 
Every cone-piece of $\widehat Z \rightarrow X$ is a cone-piece of $Z\rightarrow X$. But $\sys(\widehat Z)\geq \sys(Z)$.
Finally, since $\widehat Z \subset \widetilde X^*$, then $H\cong \Aut(\widehat Z \rightarrow Z)\subset  \Aut(\widetilde X^*\rightarrow X) =\pi_1X^*$. Indeed  every automorphism of $\widehat Z \rightarrow Z$  extends to an automorphism of $\widetilde X^*\rightarrow X$.
\end{proof}


\section{Disc diagrams and more  cubical small-cancellation }\label{sec: disc diagrams}

A \emph{disc diagram} $D$ is a compact contractible combinatorial 2-complex, together with an embedding $D \hookrightarrow S^2$. The \emph{boundary path} $\partial D$ is the attaching map of the 2-cell at infinity.

Similarly, an \emph{annular diagram} $A$ is a compact combinatorial 2-complex homotopy equivalent to $S^1$, together with an embedding $A \hookrightarrow S^2$, which induces a cellular structure on $S^2$. The \emph{boundary paths} $\partial_{int}A$ and $\partial_{out}A$ of $A$ are the attaching maps of the two 2-cells in this cellulation of $S^2$ that do not correspond to cells of $A$.

A \emph{disc (resp.\ annular) diagram in $X^*$} is a combinatorial map $(D,\partial D)\to(X^*, X^1)$ of a disc diagram (resp. $(A, \partial A) \rightarrow (X^*, X^1)$). 
A \emph{square disc (resp. annular) diagram} is a disc (resp. annular) whose image is entirely contained in $X$ (without cones).

The $2$-cells of a disc diagram $D$ in $X^*$ are of two kinds: squares mapping onto squares of $X$, and triangles mapping onto cones over edges contained in $Y_i$. 
The vertices in $D$ which are mapped to the cone-vertices of $X^*$ are also called the \emph{cone-vertices}. 
Triangles in $D$ are grouped into cyclic families meeting around a cone-vertex. 
We refer to such families as \emph{cone-cells}, and treat a whole such family as a single $2$-cell. 

Let $D$ be a disc diagram in $X^*$. The \emph{square part} $D_\square$ of $D$ is the union of all the squares in $D$
that are not contained in any cone-cells. 

The \emph{complexity} of a disc diagram $D$ in $X^*$ is defined as $$\Comp(D) = (\# \text{cone-cells in } D, \#\text{squares in $D_\square$}).$$ We say that $D$  has \emph{minimal complexity} if $\Comp(D)$ is minimal in the lexicographical order among disc diagrams with the same boundary path as $D$. A disc diagram $D$ in $X^*$ is \emph{degenerate} if $\Comp(D) = (0,0)$.
A disc diagram $D,$ in $X^*$ is \emph{singular} if $D$ is not homeomorphic to a closed ball in $\mathbb R^2$. This is equivalent to $D$ either being a single vertex or an edge, or containing a cut vertex. 
In particular, every degenerate disc diagram is singular.

\begin{definition}[$n$-collared]\label{defn: collared}
    A disc diagram $D \rightarrow X^*$ is \emph{collared} by an annular diagram $A \rightarrow D\to X^*$ if there is a subdiagram $D_0\subseteq D$ such that $$D= A \bigsqcup\limits_{\partial_{in} A = \partial D_0} D_0.$$
    
Note that if $\partial_{in}A$ does not embed in $D$, then $D_0$ is singular. Also $\partial D = \partial_{out}A$.

If $A$ decomposes as a union of $n$ ladders $L_1, \ldots, L_n$, and each cone-cell of each $L_i$ intersects  $\partial_{out} A$ and $\partial_{in} A$ nontrivially, then $D$ is \emph{n-collared} by $A$. We refer to $A$ as the \emph{collar} of $D$, to $L_1, \ldots, L_n$ as \emph{collaring ladders}.
\end{definition}

\subsection{Greendlinger's Lemma}
A cone-cell $C$ in a disc diagram $D$ is a \emph{boundary cone-cell} if $C$ intersects the boundary $\partial D$ along at least one edge. A non-disconnecting boundary cone-cell $C$ is a \emph{shell of degree $k$} if  $\partial C = RQ$ where 
$Q$ is the maximal subpath of $\partial C$ contained in $\partial D$, and $k$ is the minimal number such that $R$ can be expressed as a concatenation of $k$ pieces. We refer to $R$ as the \emph{innerpath} of $C$ and $Q$ as the \emph{outerpath} of $C$. A shell  of degree $\leq 4$ is an \emph{exposed shell}.

A \emph{corner of a square} in a disc diagram $D$ is a vertex $v$ in $\partial D$ of valence $2$ in $D$ that is contained in some square of $D$. 
A \emph{cornsquare} is a square $c$ and  a pair of dual curves emanating from consecutive edges $a, b$ of $c$ that terminate on consecutive edges $a',b'$ of $\partial D$. We abuse  notation and refer to the common vertex of $a',b'$ as a cornsquare as well.
A \emph{spur} is a vertex in $\partial D$ of valence $1$ in $D$. If $D$ contains a spur or a cut-vertex, then $D$ is \emph{singular}.

A \emph{pseudo-grid} between paths $\mu$ and $\nu$ is a square disc diagram $E$ where the boundary path $\partial E$ is a concatenation $\mu \rho \overline{\nu} \overline{\eta}$ where each $\mu, \rho, \overline{\nu}, \overline{\eta}$ is a called a \emph{side}, and  such that 
\begin{enumerate}
\item each dual curve starting on $\mu$ ends on $\nu$, and vice versa, 
\item no pair of dual curves starting on $\mu$ cross each other,
\item no pair of dual curves cross each other twice.
\end{enumerate}
If a pseudo-grid $E$ is degenerate then either $\mu = \nu$ or $\rho= \eta$. A \emph{grid} is a pseudogrid isometric to
the product of two intervals.

A \emph{ladder} is a 
disc diagram $(D, \partial D)\to (X^*, X^0)$ which is an alternating union of cone-cells and/or vertices $C_0, C_2\dots, C_{2n}$ and (possibly degenerate) pseudo-grids $E_1, E_3\dots, E_{2n-1}$, with $n\geq 0$, in the following sense:
\begin{enumerate}
\item the boundary path $\partial D$ is a concatenation $\lambda_1\overline{
\lambda_2}$ where the initial points of $\lambda_1, \lambda_2$ lie in $C_0$, and the terminal points of $\lambda_1,\lambda_2$ lie in $C_{2n}$,
\item $\lambda_1 =\alpha_0 \rho_1\alpha_2\cdots \alpha_{2n-2}\rho_{2n-1}\alpha_{2n}$ and $\lambda_2 =\beta_0\eta_1\beta_2\cdots \beta_{2n-2}\eta_{2n-1}\beta_{2n}$,
\item the boundary path $\partial C_{i} = \nu_{i-1}\alpha_i \overline{\mu_{i+1}}\overline{\beta_i}$ for some $\nu_{i-1}$ and $\mu_{i+1}$ (where $\nu_{-1}$ and $\mu_{2n+1}$ are trivial), and
\item the boundary path $\partial E_{i} = \mu_i \rho_i \overline{\nu_i} \overline{\eta_i}$.
\end{enumerate}

The cubical version of Greendlinger's Lemma will be  used in Section~\ref{sec: wall traingles}. See~\cite[Thm~3.46]{WiseBook} and~\cite[Thm~2]{Jankiewicz2017} for the proof.

\begin{lem}[Cubical Greendlinger's Lemma]\label{lem:Greendlinger}
Let $X^*= \langle X\mid Y_1, \dots, Y_s\rangle$ be a cubical presentation satisfying the $C(9)$ condition. Let $D\to X^*$ be a minimal complexity disc diagram. Then one of the following holds:
\begin{itemize}
\item $D$ is a ladder, or
\item $D$ has at least three exposed shells and/or corners of squares and/or spurs.
\end{itemize}
\end{lem}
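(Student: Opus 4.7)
The plan is to prove the lemma via a combinatorial Gauss--Bonnet argument, which is the standard approach in the disc-diagram theory developed in~\cite{WiseBook, Jankiewicz2017}. Assume for contradiction that $D$ is not a ladder and has at most two boundary features of the types listed (exposed shells, cornsquares, spurs, and, implicitly, cut-vertices which we separate out first by induction on complexity).

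First I would dispose of degenerate and singular cases. If $D$ has a spur or a cut-vertex, it already contributes boundary features, and we can split $D$ along the cut-vertex into proper subdiagrams of strictly smaller complexity and apply induction to each piece, recombining the positive features at the ends. Thus we may assume $D$ is nondegenerate and homeomorphic to a closed disc. Minimal complexity of $D$ then rules out bigons between dual curves in the square part, nugatory cone-cells, and pairs of cone-cells whose combined boundary would allow a replacement of strictly lower complexity (in particular, an internal cone-cell cannot have inner path of fewer than the forbidden number of pieces, since such an absorption into a neighboring cone-cell or into the square part would reduce $\mathrm{Comp}(D)$).

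Next I would assign angles. Each corner of a square receives angle $\pi/2$. At a vertex $v$ lying on the boundary of a cone-cell $C$, assign an angle whose total around $\partial C$ equals $(|\partial C|-2)\pi$ redistributed so that each maximal piece along $\partial C$ contributes a fixed deficit. Define curvature of a vertex $v$ by $\kappa(v)=(2-\mathrm{val}(v))\pi+\!\!\sum\! \angle v$ in the interior and by the appropriate turning-angle formula on $\partial D$. Combinatorial Gauss--Bonnet then reads
\[
\sum_{v} \kappa(v) + \sum_{f} \kappa(f) \;=\; 2\pi\chi(D) \;=\; 2\pi.
\]
Interior $4$-valent square vertices have $\kappa=0$, and squares themselves are flat. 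Interior vertices where cone-cells meet contribute non-positive curvature by minimality (no two cone-cells meet along a non-piece arc, and squares cannot be absorbed). An internal cone-cell $C$ has its boundary decomposing into at least $9$ pieces by the $C(9)$ hypothesis, which, combined with minimality preventing collapses along $\partial C$, gives $\kappa(C)\le 0$. Similarly a boundary cone-cell of degree $\ge 5$ (i.e.\ not an exposed shell) receives non-positive curvature.

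It follows that positive curvature must concentrate at the boundary features: each spur, cornsquare, and exposed shell contributes a uniformly bounded positive amount, and everything else contributes $\le 0$. With total curvature $2\pi$, at most two such features can occur only when the diagram has exactly the structure forced by the ladder definition -- two boundary cone-cells (or spurs) at the ends absorbing all the curvature, with the remainder consisting of cone-cells each of whose boundary meets $\partial D$ along exactly two arcs separated by two pieces, glued together by (possibly degenerate) pseudo-grids that contain no cornsquares. This is precisely the definition of a ladder, so either there are at least three boundary features or $D$ is a ladder. The main subtlety, and the step I expect to be most delicate, is the bookkeeping that shows the ``exactly two features'' equality case is genuinely a ladder and not some other thin configuration; this is handled by inducting on the number of cone-cells and using the fact that removing an end exposed shell produces a smaller minimal-complexity disc diagram to which the inductive hypothesis applies.
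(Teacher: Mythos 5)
The paper does not prove this lemma itself; it cites \cite[Thm~3.46]{WiseBook} and \cite[Thm~2]{Jankiewicz2017}, both of which do run a combinatorial Gauss--Bonnet argument, so your overall strategy is the right one. Your handling of the singular/degenerate cases by splitting at cut vertices and inducting on complexity, and your identification of the delicate step (the equality case showing ``exactly two features'' forces a ladder), are also in line with the cited proofs.

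However, the central step of your sketch --- the angle assignment --- is internally inconsistent, and as stated it would never make the $C(9)$ hypothesis bite. You assign angles ``whose total around $\partial C$ equals $(|\partial C|-2)\pi$.'' With the curvature formula you then write down, that forces $\kappa(C) = (2-|\partial C|)\pi + (|\partial C|-2)\pi = 0$ for every cone-cell, regardless of how many pieces $\partial C$ decomposes into. So your later claim that ``$C$ has $\geq 9$ pieces, hence $\kappa(C)\leq 0$'' does not follow from anything you set up; the number of pieces never enters. If the $C(9)$ hypothesis is not used anywhere, the statement is false (a single cone-cell diagram with fewer pieces, or a diagram built from low-piece-number cone-cells, could violate the conclusion). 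The actual arguments in the references do \emph{not} make cone-cells Euclidean-flat: they calibrate the corner angles so that a cone-cell whose boundary requires $k$ pieces has curvature that decreases with $k$, and the bound $C(9)$ (or $C(7)$ in other variants) is exactly what makes this quantity nonpositive. This calibration --- together with the verification that the resulting angles keep interior vertices at nonpositive curvature once bigons, absorbable cornsquares, and combinable cone-cells are ruled out by minimality --- is where most of the work lives, and it is missing here.

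Two smaller points worth flagging. First, you conflate ``corners of squares'' (valence-$2$ boundary vertices of a square) with ``cornsquares'' (a square with two dual curves exiting to consecutive boundary edges); the statement is phrased in terms of the former, and the Gauss--Bonnet bookkeeping is sensitive to which one is meant. Second, your deduction that ``at most two positive features implies ladder'' is asserted but not argued; in the references this requires an actual structural analysis (a convexity/ladder induction, as in the companion Ladder Theorem stated in the paper as Theorem~\ref{thm:ladder theorem}), not merely the curvature count. As written, your proposal identifies the right tool but leaves its load-bearing computations unspecified and, where it does specify them, gets the key inequality wrong.
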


\begin{theorem}[The Ladder Theorem] \label{thm:ladder theorem}
Let $X^*= \langle X\mid Y_1, \dots, Y_s\rangle$ be a cubical presentation satisfying the $C(9)$ condition and let $D\to X^*$
be a minimal complexity disc diagram in
$X^*$. If $D$ has exactly two exposed shells, then $D$ is a ladder.
\end{theorem}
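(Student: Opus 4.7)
My plan is to deduce Theorem~\ref{thm:ladder theorem} as a refinement of the Cubical Greendlinger Lemma (Lemma~\ref{lem:Greendlinger}), by showing that the alternative conclusion there is incompatible with having exactly two exposed shells. Applying Lemma~\ref{lem:Greendlinger} to $D$ immediately yields two cases: either $D$ is a ladder --- in which case we are done --- or $D$ contains at least three features, where a feature means an exposed shell, a corner of a square, or a spur. Since $D$ has exactly two exposed shells by hypothesis, at least one additional feature would have to be a cornsquare or a spur.

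I would first rule out spurs using the minimality of $\Comp(D)$: a spur at a valence-$1$ boundary vertex allows the incident edge of $D_\square$ to be collapsed, yielding a disc diagram with the same boundary path (after folding the spur back) and strictly fewer squares in its square part, contradicting minimality. So any extra feature must be a corner of a square.

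The core of the argument is then to rule out cornsquares. Suppose for contradiction $D$ has a cornsquare at $v\in\partial D$, corresponding to a square $c$ and a pair of dual curves $h_1,h_2$ emanating from consecutive edges of $c$ and terminating on consecutive edges of $\partial D$ meeting at $v$. I trace $h_1,h_2$ inward through $D_\square$. The dual curves $h_1,h_2$, together with a subpath of $\partial D$ between their endpoints, cut off a subdiagram $D''\subsetneq D$ of strictly smaller complexity. I apply Greendlinger to $D''$: by minimality of $D$, $D''$ is itself of minimal complexity for its boundary path, and by the $C(9)$ condition, pieces along any cone-cell boundary are restricted in number, so any exposed shell of $D''$ whose outerpath lies on $h_1\cup h_2$ must in fact be an exposed shell of $D$ as well. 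A careful case analysis --- both $h_i$ terminate on the same cone-cell, they terminate on distinct cone-cells, or they terminate on portions of $\partial D$ --- then produces a third exposed shell of $D$, contradicting the hypothesis. As a backup, one can instead proceed by strong induction on the number of cone-cells in $D$: the base case where $D$ has exactly two cone-cells verifies directly (using $C(9)$ and minimality) that $D_\square$ is a pseudo-grid between $C_1$ and $C_2$, so that $D$ is a ladder with $n=1$; for the inductive step, peeling one exposed shell off $D$ and applying induction extends the ladder structure.

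The main obstacle is the dual-curve bookkeeping when $h_1$ and $h_2$ interact with cone-cells of $D$. The $C(9)$ condition bounds how many pieces can appear along any $\partial C$, but one must verify that the subdiagram $D''$ does not ``share'' its exposed shells with $C_1$ or $C_2$ in an essential way, and that the propagation of pieces across the cornsquare cannot somehow be absorbed by the two known exposed shells alone. This is the step where the small-cancellation hypothesis does the real work, and where the combinatorics of pieces, cone-cells, and dual curves must be tracked with care.
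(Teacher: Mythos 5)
The paper states the Ladder Theorem without proof; it is quoted from the literature (it is proved in \cite{WiseBook} together with Greendlinger's Lemma, both as consequences of a single trichotomy established by induction on complexity). On the intended reading, ``exactly two exposed shells'' means those two shells are the \emph{only} features of $D$ (exposed shells, cornsquares, spurs all counted), in which case the Ladder Theorem is an immediate corollary of Lemma~\ref{lem:Greendlinger}: since $D$ has fewer than three features, the second alternative there fails, so $D$ is a ladder. Under that reading your two-step feature-elimination is unnecessary.

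You instead read the hypothesis as allowing additional spurs or cornsquares beyond the two shells, and attempt to eliminate them, but both eliminations have gaps. The spur step is incorrect: a spur is a hanging edge at a valence-$1$ boundary vertex, generally not in $D_\square$, and removing it changes $\partial D$ --- precisely what minimal complexity holds fixed; moreover ladders \emph{can} have spurs (when an extremal $C_i$ is a vertex rather than a cone-cell), so eliminating spurs would contradict the conclusion. The cornsquare elimination is only a sketch: cutting along the two dual curves emanating from the interior square $c$ does not obviously produce a subdiagram $D''$ whose exposed shells are exposed shells of $D$ (the dual curves may run through or terminate on cone-cells), and sorting out the hexagon-move bookkeeping there is exactly what the joint inductive proof in \cite{WiseBook} accomplishes; the Ladder Theorem is not cleanly a corollary of Greendlinger's Lemma applied to a subdiagram. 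Your ``backup'' inductive proposal is much closer to the actual structure of the argument in the literature.
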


\subsection{The $B(6)$ condition}

We now introduce a  set of conditions that provides
a wallspace structure on $\widetilde X^*$ so that $\pi_1X$ acts on a CAT(0) cube complex:
\begin{definition}\label{def:b6}  A cubical presentation $\langle X \mid \{Y_i\} \rangle $ satisfies
the \emph{B(6) condition} if the following conditions are satisfied:

\begin{enumerate}
\item \label{item:b6.1}(Small-cancellation) $\langle X \mid \{Y_i\} \rangle$ satisfies the $C'(\frac{1}{\alpha})$ condition for $\alpha \geq 14$.
\item \label{item:b6.2} (Wallspace Cones) Each $Y_i$ is a wallspace where each wall in $Y_i$ is the union $\sqcup H_j$ of a collection of disjoint embedded 2-sided hyperplanes in $Y_i$, and there is an embedding $\sqcup N(H_j) \rightarrow Y_i$ of the disjoint union of their carriers into $Y_i$. Each such collection separates $Y_i$. Each hyperplane in $Y_i$ lies in a unique wall.
\item \label{item:b6.3} (Hyperplane Convexity) If $P \rightarrow Y_i$ is a path that starts and ends on vertices
on 1-cells dual to a hyperplane $H$ of $Y_i$ and $P$ is the concatenation of at most 7 pieces, then $P$ is path homotopic
in $Y_i$ to a path $P\rightarrow N(H) \rightarrow Y_i$.
\item \label{item:b6.4} (Wall Convexity) Let $S$ be a path in $Y_i$ that starts and ends with 1-cells
dual to the same wall of $Y_i$. If $S$ is the concatenation of at most 7 pieces, then $S$ is path-homotopic into the carrier of a hyperplane of that wall.
\item \label{item:b6.5} (Equivariance) The wallspace structure on each cone over $Y_i$ is preserved by $\Aut_X(Y)$. 
\end{enumerate}
\end{definition}

\begin{definition}[$k$-Wall Convexity]
    We will consider a variant of Definition~\ref{def:b6}.\eqref{item:b6.4} where $7$ is replaced with $k$. We call it \emph{$k$-Wall Convexity}.
\end{definition}
This condition will be used in the proof of cocompactness of the action of $\pi_1X^*$.

\section{Cubulating}\label{sec: cubulating}

The purpose of this section is to set up the proof of the remaining part of Theorem~\ref{thm:main}, i.e.\ that $Y$ can be chosen so that $\pi_1 X^*$ is  cubulated, and prove some additional properties of the cubulation (Theorem~\ref{thm:main with all properties}). In Section~\ref{subsec: wallspace structure} we describe how to obtain the cubical structure and some of its properties. In Section~\ref{subsec:induced}
we discuss induced cubical presentations, which are used in some parts of the proof of Theorem~\ref{thm:main with all properties}.
In Section~\ref{sec:freeness}, we prove that $Y$ can be chosen so that $\pi_1 X^*$ acts freely on its dual.
Section~\ref{sec:cocompact} contains a result that implies cocompactness of the action on the dual under appropriate assumptions on $X$.

\subsection{Wallspace structures on pseudographs}\label{subsec: wallspace structure}

Let $X^*=\langle X \mid \{Y_i\}\rangle$ be a cubical presentation where each $Y_i$ is a rank~$1$ pseudograph. There is a standard wallspace structure that can be put on each $Y_i$ to obtain a wallspace structure on $\widetilde X^*$, which we now describe. 

\begin{construction}[Wallspace structure on rank~$1$ pseudographs]\label{const: rank 1 wall}
Let $Y \rightarrow X$ be a rank~$1$ pseudograph and let $\sigma \rightarrow Y$ be a closed-geodesic in $Y$  realising the systole, so $|\sigma|=sys(Y)$. After potentially barycentrically subdividing $X$, we may assume that $|\sigma|=2n$ for some $n$, so that there is a well-defined  equivalence relation on the hyperplanes of $Y$ where two hyperplanes that are dual to edges of $\sigma$ are equivalent if and only if they are dual to antipodal edges of $\sigma$.  If a hyperplane is not dual to an edge of  $\sigma$, it is its own equivalence class. This equivalence relation then defines a wallspace structure on $Y$.
\end{construction}

\begin{lemma}\label{lem: cyclic wallspace b6}
    Let $X^*=\langle X \mid \{Y_i\}\rangle$ be a cubical presentation satisfying the $C'(\frac{1}{n})$ condition for $n \geq 16 $ and where each $Y_i$ is a rank~$1$ pseudograph. Then $X^*$ satisfies the  $B(6)$ condition with each $Y_i$ endowed with the wallspace structure in Construction~\ref{const: rank 1 wall}. Thus $\pi_1 X^*$ acts on the CAT(0) cube complex dual to the wallspace structure on $X^*$ obtained by extending the wallspace structures on the $Y_i$.
\end{lemma}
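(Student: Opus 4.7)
The plan is to verify each of the five clauses of the $B(6)$ condition (Definition~\ref{def:b6}) directly from the structure of rank~$1$ pseudographs and the wallspace of Construction~\ref{const: rank 1 wall}, and then invoke the standard cubical small-cancellation cubulation machinery. The small-cancellation clause (\ref{item:b6.1}) holds by hypothesis since $n\geq 16\geq 14$. The equivariance clause (\ref{item:b6.5}) follows because in a rank-$1$ pseudograph the systolic loop $\sigma$ is essentially unique up to orientation, so $\Aut_X(Y_i)$ acts on $\sigma$ as a subgroup of its dihedral symmetry group, and every rotation and reflection of a $2n$-cycle preserves the antipodal pairing on its edges. For the wallspace cones clause (\ref{item:b6.2}), I would check that the two carriers in each antipodal pair are disjoint (because antipodal edges of $\sigma$ lie at distance $\sys(Y_i)/2$ along $\sigma$ and therefore cannot share a square), that hyperplanes are $2$-sided, and that each wall separates $Y_i$; any remaining cleanliness issues are handled by a harmless further barycentric subdivision.

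The substantive content is the two convexity clauses, and the key observation for both is a length bound: a path in $Y_i$ that is a concatenation of at most $7$ pieces has length less than $\frac{7}{16}\sys(Y_i)<\frac{1}{2}\sys(Y_i)$. I would lift such a path to $\widetilde Y_i$ and use the fact that the lift $\tilde\sigma$ of the systolic loop is a bi-infinite combinatorial geodesic along which lifts of hyperplanes dual to $\sigma$-edges appear periodically.

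For hyperplane convexity (\ref{item:b6.3}), consecutive lifts in $\widetilde Y_i$ of a single hyperplane $H$ of $Y_i$ occur every $\sys(Y_i)$ edges along $\tilde\sigma$, so our short lifted path must have both endpoints lying on edges dual to the \emph{same} lift $\widetilde H$ of $H$. Convexity of the carrier $N(\widetilde H)$ in the CAT(0) cube complex $\widetilde Y_i$ then yields a homotopy rel endpoints into $N(\widetilde H)$, and this projects to the desired path-homotopy into $N(H)$ in $Y_i$.

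For wall convexity (\ref{item:b6.4}), the subcase in which both endpoints are dual to the same hyperplane of the wall reduces immediately to the previous argument. The remaining subcase, where the path starts dual to $H_1$ and ends dual to its antipodal partner $H_2$, must be shown not to occur: the closest lifts of $H_1$ and $H_2$ in $\widetilde Y_i$ are separated by distance at least $\frac{1}{2}\sys(Y_i)-1$ along $\tilde\sigma$, which exceeds $\frac{7}{16}\sys(Y_i)$ once $\sys(Y_i)>16$, and when $\sys(Y_i)\leq 16$ every piece has diameter strictly less than~$1$ and is hence a single vertex, so the path carries no $1$-cells and the hypothesis is vacuous. The principal obstacle I anticipate is the bookkeeping for (\ref{item:b6.2}) around $2$-sidedness, embeddedness, and carrier-disjointness after subdivision; once $B(6)$ is established, the action of $\pi_1 X^*$ on the dual CAT(0) cube complex follows from the standard $B(6)$ cubulation theory of \cite{WiseBook}.
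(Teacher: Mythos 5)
The paper does not prove this lemma; it is cited to external work ({\cite{ArenasAGT, FuterWise, WiseBook}}), so your direct verification is a new argument rather than a reproduction of the paper's proof. The strategy — check each of the five clauses of Definition~\ref{def:b6} — is the right one, and your small-cancellation and hyperplane-convexity parts track the analogous rank-$2$ computation in the proof of Theorem~\ref{thm:main with all properties}.\eqref{conclusions:B6}.

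There is, however, a concrete gap in your treatment of the wallspace-cones clause that propagates into wall convexity. The $B(6)$ requirement for a two-hyperplane wall $\{H_1,H_2\}$ is that the disjoint union $N(H_1)\sqcup N(H_2)$ \emph{embeds} in $Y_i$, not merely that the two antipodal $\sigma$-edges are far apart. The $C'(\frac{1}{16})$ hypothesis bounds the diameters of cone-pieces and wall-pieces of $Y_i$, but the carrier $N(H)$ of a hyperplane $H$ \emph{of} $Y_i$ is neither of these, so nothing in the lemma's hypotheses keeps $\diam(N(H))$ small relative to $\sys(Y_i)$. If $\diam(N(H))$ is comparable to $\sys(Y_i)$, the antipodal carriers can overlap even though the $\sigma$-edges lie at distance $\sys(Y_i)/2$, and barycentric subdivision leaves this untouched since it does not change the underlying topology. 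This is precisely why the paper's own rank-$2$ verification explicitly assumes $\diam(N(U)) < \frac14\sys(Y)$ (and $\leq\frac{1}{33}\sys(Y)$ for wall convexity), and why the cited Theorem~\ref{thm:FuterWise} takes $\diam(N(U)) < \alpha\sys(Z)$ as a hypothesis. The same gap weakens your wall-convexity estimate: the $1$-cells on which $P$ begins and ends are dual to $H_1,H_2$ but need not be the $\sigma$-edges themselves, so the relevant lower bound is of the form $\frac12\sys(Y_i) - 2\diam(N(H))$ (compare the $\frac{2}{33}\sys(Y)$ correction in the paper's rank-$2$ argument), not $\frac12\sys(Y_i)-1$. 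You either need to add a carrier-diameter hypothesis — which the cited ``versions'' of this lemma do — or explain why it follows from the stated hypotheses, which I do not believe it does. Finally, the equivariance clause also needs more care: the systolic loop $\sigma$ is not claimed to be unique, and if $\Aut_X(Y_i)$ permutes distinct systoles, your dihedral-symmetry argument does not directly apply; the paper sidesteps this in the rank-$2$ setting by arranging $\Aut_X(Y')$ to be trivial (Remark~\ref{rem: trivial aut}).
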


 Versions of Lemma~\ref{lem: cyclic wallspace b6} are proven in~\cite{ArenasAGT, FuterWise, WiseBook}.

With a bit of care, the ideas in Construction~\ref{const: rank 1 wall} can be extended to work for higher-rank pseudographs; we explain this in detail for the rank~$2$ case below.

\begin{construction}[Wallspace structure on rank~$2$ pseudographs]\label{const: walspace rank 2}
Let $Y$ be a rank~$2$ superconvex pseudograph
and let $\pi_1Y = \langle a,b\rangle$.
Consider rank~$1$ pseudographs $Z_{1}$ and $Z_{2}$
corresponding to the subgroups $\langle aba^2b\cdots a^mb \rangle$ and $\langle  bab^2a\cdots b^n\rangle$ for some appropriately large $n,m$.

There exists $r=r(Y,a,b)>0$ such that
the local isometry $Y'\rightarrow X$ 
associated to  
$\langle aba^2b\cdots a^mb , bab^2a\cdots b^n\rangle$
has the following form:
There is a contractible locally convex subcomplex $K\subset Y'$ with
\begin{enumerate}
\item 
 $\diam(K)\leq r$ such that:
    
    \item $Y'$ is the union  $Z_1\cup K \cup Z_2$.
    \item $Z_1\cap Z_2 \subset K$.
\end{enumerate}

As in the rank~$1$ case in Construction~\ref{const: rank 1 wall}, we may subdivide $X$ and consider the antipodal wallspace structure on each of $Z_1$ and $Z_2$ with respect to some choice of $\sigma_i\to Z_i$ where $|\sigma_i| = \sys{Z_i}$.
We extend this to a wallspace structure on $Y'$ as follows:
\begin{enumerate}
    \item Each wall of $Z_1$ disjoint from $K$ is a wall of $Y'$.
    \item Each wall of $Z_2$ disjoint from $K$ is a wall of $Y'$.
    \item Each hyperplane $U$ of $K$  extends to a wall of $Y'$ consisting of:
    $U$, the  hyperplane of $Z_1$ that is antipodal to $Z_1\cap U$ (if non-empty),
    and
    the  hyperplane of $Z_2$ that is antipodal to $Z_2\cap U$ (if non-empty).
    \end{enumerate}
See Figure~\ref{fig:Pseudograph_Wallspace}.
\end{construction}

\begin{figure}
\centering
\includegraphics[height=1.3in]{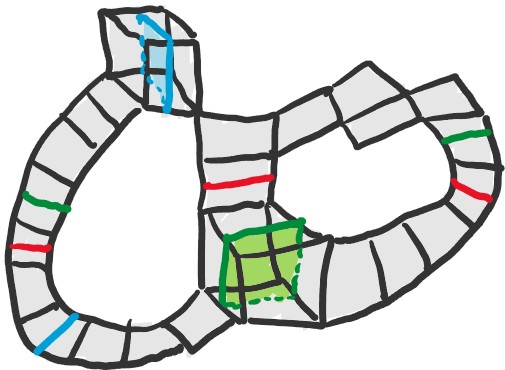}
\caption{Some walls in the wallspace of Construction~\ref{const: walspace rank 2}.
\label{fig:Pseudograph_Wallspace}}
\end{figure}

We prove in Theorem~\ref{thm: main without cubulation} that for each $\alpha>0$ and  for all 
for $n,m\gg 1$,
$\langle X \mid Y'\rangle$ is $C'(\alpha)$,  in Theorem~\ref{thm:main with all properties}.\eqref{conclusions:B6} that  $\langle X \mid Y'\rangle$ satisfies  the  $B(6)$ condition, and in Theorem~\ref{thm:main with all properties}.\eqref{conclusions:k-wall convex} that $\langle X \mid Y'\rangle$ satisfies $25$-wall convexity. In Lemma~\ref{lem: wallspace structure freeness} that  $\langle X \mid Y'\rangle$ satisfies the hypotheses of Theorem~\ref{thm:proper} with this wallspace structure on $Y'$. Finally we prove in~\ref{thm:freeness} that 
 $\langle X \mid Y'\rangle$ has torsion-free fundamental group.

\begin{rem}
    The choice of 
    $\langle aba^2b\cdots a^mb \rangle$ and $\langle  bab^2a\cdots b^n\rangle$ is rather arbitrary, and the construction and proof work quite generally for a pair of small-cancellation words.
\end{rem}

\begin{rem}\label{rem:diam(K)}
    Note that in Construction~\ref{const: walspace rank 2} for any $C$ we can choose $Y'$ such that $\sys(Y')\geq C\diam(K)$. Indeed, $\diam(K)$ is proportional to the overlap between the words defining $Z_1, Z_2$.
\end{rem}

\begin{rem}\label{rem: trivial aut}
    Note that $\Aut_X(Y')$ is trivial in Construction~\ref{const: walspace rank 2}. Indeed, as $Y$ is compact, $|\Aut_X(Y')|< \infty$. Thus, $[\Stab_X(\widetilde Y'):\pi_1Y']< \infty$ and if $\Stab_X(\widetilde Y')\neq \pi_1Y'$, then there is a local isometry $Y_0 \rightarrow X$ such that $Y'$ is a proper, regular, finite-degree covering of  $Y_0$.  This contradicts the choice of $Y'$.
\end{rem}

\subsection{Induced Cubical Presentations}\label{subsec:induced}

\begin{defn}[Induced cubical presentation]
\label{def:induced presentation}
Given a cubical presentation $X^*=\langle X \mid \{Y_i\}\rangle$ and a local-isometry $f:E\rightarrow X$ there is an \emph{induced cubical presentation}
$E^*=\langle E \mid \{E\otimes_X Y_i\} \rangle$. 
For each $i \in I$ there is an induced map $f_i: E\otimes_X Y_i \rightarrow Y_i$ such that the following diagram commutes.

\begin{center}
\begin{tikzcd}
E\otimes_X Y_i \arrow{r}{f_i} \arrow{d} & Y_i \arrow{d} \\
E \arrow{r}{f}                        & X            
\end{tikzcd}
\end{center}

A \emph{map of cubical presentations} is the data $(X^*, E^*, f)$. Note that $f$ induces a combinatorial map $f^*:E^* \rightarrow X^*$ that sends  cone-points to cone-points and the cubical part of $E^*$ to the cubical part of $X^*$.

\end{defn}

\begin{definition}[Liftable shells in induced presentations]\label{def:nomissingshells}
    Let $X^*$ be a cubical  presentation. Let $f:E\to X$ be a local isometry. We say that the induced presentation $f^*:E^* \rightarrow X^*$ has \emph{liftable shells} if the following holds:
    
    Let $R$ be a non-replaceable (i.e.\ whose boundary path is not null-homotopic within its associated cone over $Y_j$)
    shell in a minimal complexity diagram $D\rightarrow X^*$ with $\partial R = QS$, where $Q$ is its outerpath and $S$ is its innerpath.
    If $Q$ lifts to $E$, then so does $S$.
\end{definition}

The above condition is relevant due to the following theorem.

\begin{thm}[{\cite[Thm~3.68]{WiseBook}}]\label{thm:no_missing_shells_injective}
     Let $f: E^* \rightarrow X^*$ be a map of cubical presentations, where $X^*$ satisfies $C'(\frac 1 {12})$. If $f$ has liftable shells then $f$ is $\pi_1$-injective, and $\widetilde E^*\to \widetilde X^*$ is injective on the cubical part.
\end{thm}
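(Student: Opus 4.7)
My approach is to prove both conclusions simultaneously by induction on the complexity of minimal-complexity disc diagrams in $X^*$, using the liftable-shells hypothesis to successively lift the exposed features supplied by Greendlinger's Lemma.

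For $\pi_1$-injectivity, I begin with a loop $\gamma$ in $E^*$ whose image $f^*(\gamma)$ is null-homotopic in $X^*$, and aim to build a disc diagram $\widetilde{D}\to E^*$ with $\partial\widetilde{D}=\gamma$. The plan is to choose a minimal-complexity disc diagram $D\to X^*$ with $\partial D=f^*(\gamma)$ and induct on $\Comp(D)$. When $D$ is degenerate, the conclusion reduces to $\pi_1$-injectivity of the local isometry $E\to X$. When $D$ is nondegenerate, Greendlinger's Lemma (applicable since $C'(\tfrac{1}{12})$ implies $C(9)$) yields a spur, a cornsquare, or an exposed shell at $\partial D$. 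Spurs and cornsquares sit inside $\partial D$, which already lifts to $E$, so the associated reductions transfer to $E^*$ and strictly reduce $\Comp(D)$, permitting induction.

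The critical case is an exposed shell $R$ with $\partial R=QS$, where $Q\subset\partial D$ is the outerpath and $S$ is the innerpath, a concatenation of at most four pieces. Because $Q$ lifts to $E$ via the lift of $\partial D$, the liftable-shells hypothesis furnishes a lift $\widetilde S\to E$ of $S$. The concatenation $\widetilde Q\widetilde S$ is a lift of $\partial R$ that is null-homotopic in the cone $Y_i$ containing $R$; by the universal property of the fiber product, this loop lies in a single component of $E\otimes_X Y_i$ and is null-homotopic there, so $R$ lifts to a cone-cell $\widetilde R$ in $E^*$. Replacing $R$ by $\widetilde R$ and applying the induction hypothesis to $D\setminus R$, whose boundary swaps $Q$ for $S$ and still lifts to $E$, assembles the desired $\widetilde D$.

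For the cubical-part injectivity, suppose $x_1\neq x_2$ in the cubical part of $\widetilde E^*$ map to a common point of the cubical part of $\widetilde X^*$. Choose a path $p$ in the cubical part of $\widetilde E^*$ from $x_1$ to $x_2$; its image $f^*(p)$ is a loop in the cubical part of $\widetilde X^*$, which bounds a minimal-complexity disc diagram $D\to\widetilde X^*$ by simple connectivity. Running the same inductive lifting procedure produces $\widetilde D\to\widetilde E^*$ based at $x_1$. Since the cubical part of $\widetilde E^*$ maps to the cubical part of $\widetilde X^*$ by a combinatorial local isometry (lifted from $E\to X$), path-lifts on the cubical part are unique, so $\partial\widetilde D=p$; but $\partial\widetilde D$ is a closed loop, forcing $x_1=x_2$. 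The main obstacle I anticipate is the exposed-shell step: verifying that the lifted outer and inner paths $\widetilde Q,\widetilde S$ assemble into the boundary of a bona fide cone-cell in $E^*$, i.e.\ that they sit in a single component of $E\otimes_X Y_i$ and enclose a null-homotopy there. A secondary technical point is checking that each of the spur, cornsquare, and exposed-shell reductions strictly decreases $\Comp(D)$ in the lexicographic order, so that the induction actually terminates.
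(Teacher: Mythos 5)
The paper cites this as Theorem~3.68 of \cite{WiseBook} rather than reproving it, so there is no in-text proof to compare against; your outline follows essentially the same inductive lifting strategy as Wise's original argument (minimal-complexity diagram, Cubical Greendlinger's Lemma, lift exposed shells via the liftable-shells hypothesis, and handle spurs/cornsquares via the local isometry $E \to X$). The ideas are right, and the cubical-part injectivity argument is the standard one: the lift of $\partial D$ in $E$ closes up because it bounds the lifted diagram, and then lifting the resulting null-homotopic loop to $\widetilde E^*$ forces $x_1 = x_2$.

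A few places where you should tighten the bookkeeping. First, the claim that spur and cornsquare reductions ``strictly reduce $\Comp(D)$'' is not accurate for spurs: removing a spur deletes an edge but no squares or cone-cells, so $\Comp(D)$ is unchanged. The standard fix is either to choose the representative loop $\gamma \to E$ to be reduced from the outset (then $\partial D = f^*(\gamma)$ has no backtracks, because local isometries are locally injective, so spurs cannot occur), or to augment the induction variable to $(\Comp(D), |\partial D|)$ with lexicographic order. Cornsquares, on the other hand, genuinely drop the square count once you shuffle the square to the boundary and peel it off; just note explicitly that the new boundary still lifts to $E$ because of the local-isometry property that corners of squares pull back. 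Second, Greendlinger's Lemma as stated gives a dichotomy where the alternative is that $D$ is a ladder, so you should say a word about why ladders are covered: a ladder that is all squares is a grid (base case), and a ladder with cone-cells has exposed shells at its two ends whose innerpaths lie in single pieces, so the shell-lifting step still applies. Finally, you should say why the exposed shell you select is non-replaceable, so that the liftable-shells hypothesis is applicable: in a minimal-complexity diagram a replaceable shell could be traded for a square diagram, which lexicographically lowers $\Comp(D)$.
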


The liftable shells property in induced presentations is guaranteed by:
\begin{lem}[{\cite[Lem~3.67]{WiseBook}}]\label{lem:liftable shells condition}
Let $ \langle X \mid \{Y_i\} \rangle$  be a $C'(\frac{1}{14})$ cubical presentation. Let $A \rightarrow X$ be a local-isometry and let $A^*$ be the associated induced presentation. Suppose that for each $i$, each component of $A \otimes_X Y_i$ is either a copy of $Y_i$ or is a contractible complex $K$ with $\diam(K) \leq 12 \sys(Y_i)$. Then the natural map $A^* \rightarrow X^*$ has liftable shells.
\end{lem}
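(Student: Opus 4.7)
The plan is to study the lift of the outerpath through the fiber product, since the hypotheses on the components of $A\otimes_X Y_i$ are tailor-made for this. Let $R$ be a non-replaceable shell of a minimal-complexity diagram $D\to X^*$ with $\partial R=QS$, with $R$ mapping to a cone over some $Y_i$, and suppose $Q$ lifts to a path $\widetilde Q\to A$. By the universal property of the fiber product, the pair $(\widetilde Q\to A,\,Q\to Y_i)$ induces a lift of $Q$ into a unique component $C$ of $A\otimes_X Y_i$. By hypothesis, $C$ is either a copy of $Y_i$ or a contractible complex $K$ with $\diam(K)\le 12\,\sys(Y_i)$.

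When $C\cong Y_i$, the projection $C\to A$ is a local isometry with simply-connected source, hence embeds a copy of $Y_i$ as a convex subcomplex of $A$ containing $\widetilde Q$. Since $\partial R$ is a closed path in $Y_i$, the whole boundary lifts to this embedded copy inside $A$, and the $S$-portion of this closed lift gives the desired lift.

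The substantive case is to rule out $C=K$ for a non-replaceable shell in a minimal diagram. Here I would combine the $C'(\tfrac{1}{14})$ condition with the shell bound: the innerpath is at most four pieces, so $|S|<\tfrac{4}{14}\sys(Y_i)$, while non-replaceability forces $|QS|\ge\sys(Y_i)$, and hence $|Q|>\tfrac{5}{7}\sys(Y_i)$. Since $\widetilde Q$ lies in the contractible complex $K$, which embeds in both $A$ and $\widetilde Y_i$, the plan is to construct a replacement disc diagram $D'$ with the same boundary path as $D$ but strictly smaller complexity. Concretely, one replaces $R$ by a filling inside $K$ (available by contractibility) together with a short patch realizing $S$; the fact that $|S|$ is bounded by four pieces and $\diam(K)\le 12\,\sys(Y_i)$ is exactly what ensures that any cone-pieces introduced in the patch are strictly cheaper than the original $R$, contradicting the minimality of $D$. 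Hence the contractible case cannot occur, and the first case applies.

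The main obstacle is verifying that the proposed surgery truly decreases complexity. The numerical balance between $12$ and $\tfrac{1}{14}$ must be played against the shell-piece count of four, and one must apply the cubical Greendlinger Lemma and the Ladder Theorem to the replaced subdiagram in order to control the combinatorial structure near $R$ after the surgery. This is where essentially all of the small-cancellation machinery will have to be invoked to ensure that the new patch does not itself create new shells that would obstruct the complexity comparison.
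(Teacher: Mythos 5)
Your two-case split via the fiber product is the right framing, but both branches need repair.

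In the case $C\cong Y_i$: the claim that ``$C\to A$ is a local isometry with simply-connected source, hence embeds a copy of $Y_i$ as a convex subcomplex of $A$'' is incorrect. $Y_i$ is generally not simply connected (in this paper it is a rank-$2$ pseudograph), so neither is $C$, and $C\to A$ need not be injective. You do not need the embedding: since $C\to Y_i$ is an isomorphism, the closed path $\partial R=QS$ in $Y_i$ pulls back to a closed path in $C$; by uniqueness of the fiber-product lift its $Q$-portion is $\widehat Q$, and the image of the $S$-portion under $C\to A$ is the required lift of $S$.

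In the contractible case the proposal has a genuine gap. The proposed surgery --- ``replace $R$ by a filling inside $K$ together with a short patch realizing $S$'' --- is not a well-defined diagram move, and the complexity comparison is never actually carried out; moreover, the contradiction ought to be with \emph{non-replaceability} of $R$, not with minimality of $D$. The intended argument is a short metric one with no diagram surgery. Since $K$ is contractible, the local isometry $K\to Y_i$ lifts to an embedding of $K$ onto a convex subcomplex of $\widetilde Y_i$, and $\widehat Q$ lies inside this copy; hence the lift of $Q$ to $\widetilde Y_i$ has endpoints at distance at most $\diam(K)$. The innerpath $S$ is a concatenation of at most four pieces (this tacitly assumes $R$ is an \emph{exposed} shell, which you should state), each of diameter less than $\tfrac1{14}\sys(Y_i)$. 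Thus the lift of $QS$ to $\widetilde Y_i$ based at $\tilde v$ terminates at $g\tilde v$ with
\[
\dist(\tilde v,g\tilde v)\ <\ \diam(K)+\tfrac{4}{14}\sys(Y_i),
\]
and if $\diam(K)<\tfrac1{12}\sys(Y_i)$ --- the bound ``$\leq 12\sys(Y_i)$'' in the statement is evidently a misprint for $\leq \tfrac1{12}\sys(Y_i)$, as the application in Proposition~\ref{prop: Y2 in the quotient by Y1} arranges $\sys(Y_1)>12C$ --- this is strictly less than $\sys(Y_i)$. Since the systole is a lower bound for the displacement of every nontrivial element of $\pi_1Y_i$, this forces $g=1$, so $QS$ is null-homotopic in $Y_i$, contradicting non-replaceability. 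Hence the contractible case never arises for a non-replaceable shell, and only your first case applies.
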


The next lemma guarantees convexity. 
\begin{lem}[{\cite[Lem~3.74]{WiseBook}}]\label{lem:convexity of induced}
    Let $X^*$ be a $C'(\frac{1}{14})$ cubical presentation. Suppose $E^* \to X^*$ has no missing shells.
    Let $2\alpha+\beta\leq \frac12$ where $\alpha,\beta>0$. Suppose $|P|_{Y_i} <\alpha \sys(Y_i)$ whenever P is a cone-piece of a translate of $\widetilde Y_j$ in a translate of $\widetilde Y_i$ (so one is not contained in the other).
    Suppose that for any path $P$ in the intersection of translates $\widetilde Y_i,\widetilde E$ in $\widetilde X$, either $|P|_{Y_i} <\beta\sys(Y_i)$ or $Y_i  \subset E$.
    Then $\widetilde E^* \rightarrow \widetilde X^*$ embeds as a convex subcomplex.
\end{lem}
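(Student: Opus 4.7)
The plan is to first apply Theorem~\ref{thm:no_missing_shells_injective} to obtain $\pi_1$-injectivity and injectivity on the cubical part of $\widetilde E^*\to \widetilde X^*$. Injectivity on cones then follows because each cone of $\widetilde E^*$ sits above a unique lift $\widetilde Y_i\subset \widetilde E$ and these lifts inject into $\widetilde X$ by cubical injectivity, so the map $\widetilde E^*\hookrightarrow \widetilde X^*$ is an embedding. The heart of the argument is convexity, which I would prove by contradiction using a minimal-complexity disc diagram and the Cubical Greendlinger Lemma.

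Suppose convexity fails. Then there is a combinatorial $\widetilde X^*$-geodesic $\gamma$ whose endpoints lie in $\widetilde E^*$ but with $\gamma\not\subset \widetilde E^*$. Choose a path $\gamma'\subset \widetilde E^*$ with the same endpoints and a disc diagram $D\to \widetilde X^*$ with $\partial D=\gamma(\gamma')^{-1}$, chosen so that the pair $(\gamma',D)$ minimizes $\Comp(D)$ among all such pairs. Since $X^*$ is $C'(\tfrac{1}{14})$, hence $C(9)$, Lemma~\ref{lem:Greendlinger} applies and $D$ is either a ladder or carries at least three exposed shells, cornsquares, or spurs on $\partial D$.

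The core of the proof is to rule out every exposed feature. Features lying on the geodesic side $\gamma$ (shells, cornsquares, or spurs) yield strictly shorter replacements for $\gamma$ in $\widetilde X^*$, contradicting its geodesicity. Features on the $\widetilde E^*$-side $\gamma'$ are where the hypothesis $2\alpha+\beta\leq\tfrac12$ is used. Consider an exposed shell $C$ over a cone $g\widetilde Y_i$, with outerpath $Q\subset \gamma'\subset \widetilde E^*$ and innerpath $S$ expressible as at most four pieces; then $Q$ is a path in the intersection $g\widetilde Y_i\cap \widetilde E$, so by the second hypothesis either $|Q|_{Y_i}<\beta\sys(Y_i)$ or $gY_i\subset E$. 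In the first case,
\[
|\partial C|<(4\alpha+\beta)\sys(Y_i)\leq (1-\beta)\sys(Y_i)<\sys(Y_i),
\]
so $\partial C$ is null-homotopic in $Y_i$ and the cone-cell is replaceable by a square subdiagram, reducing the cone-cell count and contradicting minimality. In the second case the cone $gY_i$ lifts into $\widetilde E^*$, so shelling $C$ and replacing $Q$ in $\gamma'$ by $S$ produces a new path $\gamma''\subset \widetilde E^*$ together with a disc diagram $D'$ of strictly smaller complexity. Cornsquares and spurs on $\gamma'$ are absorbed by local modifications of $\gamma'$ that strictly reduce complexity. The ladder case is handled identically by applying the same analysis to the two end cone-cells. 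Since every feature is impossible, $D$ must be degenerate, forcing $\gamma=\gamma'\subset \widetilde E^*$, a contradiction.

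The main obstacle will be verifying that the complexity-reducing modifications on the $\gamma'$ side really do decrease $\Comp(D)$ in lexicographic order, and in particular that the replacement $\gamma''$ genuinely remains inside $\widetilde E^*$ in the second subcase (which uses that the lifted cone over $gY_i$ carries the innerpath $S$ into $\widetilde E^*$). The governing arithmetic is the single inequality $4\alpha+\beta<1$ coming from $2\alpha+\beta\leq\tfrac12$, combined with the degree-$\leq 4$ bound on exposed shells; these together supply all the needed length bounds.
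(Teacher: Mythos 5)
The paper does not prove this lemma: it is cited directly as \cite[Lem~3.74]{WiseBook} with no proof supplied, so there is no in-paper argument to compare against. Your proposal describes the standard disc-diagram strategy for convexity in cubical small-cancellation (injectivity via Theorem~\ref{thm:no_missing_shells_injective}, then a minimal-complexity diagram between an $\widetilde X^*$-geodesic and an $\widetilde E^*$-path, with Lemma~\ref{lem:Greendlinger} ruling out exposed features), and this is almost certainly the shape of Wise's proof. The governing arithmetic $4\alpha+\beta\le 1-\beta$ is derived correctly from $2\alpha+\beta\le\frac12$.

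That said, there are genuine gaps in the sketch as written. First, the estimate $|S|<4\alpha\sys(Y_i)$ is not justified: the four pieces composing the innerpath $S$ need not all be cone-pieces with proper containment, and wall-pieces (or pieces between translates of the same cone) are controlled only by the $C'(\frac{1}{14})$ hypothesis, hence bounded by $\frac{1}{14}\sys(Y_i)$ rather than $\alpha\sys(Y_i)$. If $\alpha<\frac{1}{14}$, the correct bound is $|S|<\frac{4}{14}\sys(Y_i)$; the final conclusion $|\partial C|<\sys(Y_i)$ still follows because $\beta<\frac12<\frac{10}{14}$, but your stated inequality is not. Second, and more seriously, your argument implicitly assumes every exposed shell has its outerpath $Q$ entirely inside $\gamma$ or entirely inside $\gamma'$; you do not treat shells whose outerpath straddles an endpoint of $\gamma$ and $\gamma'$, which is exactly the configuration that drives the need for the full strength of $2\alpha+\beta\le\frac12$ (a weaker inequality like $\beta<\frac{10}{14}$ suffices for the ``one-sided'' cases you do handle). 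The ladder case, which you dismiss as ``handled identically,'' is precisely where the two end cells carry those shared endpoints, so the omission is not innocuous. Third, you list ``no missing shells'' at the outset but never actually invoke it in the convexity argument: the mechanism you cite for lifting $S$ into $\widetilde E^*$ is the clause ``$gY_i\subset E$'' from the $\beta$-dichotomy, whereas the liftable-shells hypothesis (Definition~\ref{def:nomissingshells}) is what lets you push \emph{any} non-replaceable shell with $Q$ lifting to $E$. Sorting out which hypothesis does which job, and closing the straddling case, are the real work remaining; your closing paragraph acknowledges part of this, but as written the proof is not complete.
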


\subsection{Cubulated quotients}
Let $X$ be a nonpositively curved cube complex that is a non-product and where $\pi_1X$ acts without a fixed point at infinity on $\widetilde X$. In Section~\ref{sec: proof without cocompactness} we proved that for every $\alpha>0$ there exists a local isometry $Y\hookrightarrow X$ of a superconvex pseudograph with $\pi_1 Y=  F_2$ such that $\langle X\mid Y\rangle$ is a cubical $C'(\alpha)$ small-cancellation presentation.  We now prove the following, which implies Theorem~\ref{thm:main}. 

\begin{theorem}\label{thm:main with all properties}
        Let $X$ be a nonpositively curved cube complex that admits a local isometry of a rank~$2$ superconvex pseudograph. 
        For every $\alpha\leq \frac 1 {16}$ there is a superconvex rank~$2$ pseudograph $Y\to X$ such that $X^* = \langle X\mid Y\rangle$ is $C'(\alpha)$.

        The complex $Y$ can be chosen so that the following properties hold:
    \begin{enumerate}

        \item\label{conclusions:survival}
        If $S$ is a finite set of nontrivial elements in  $\pi_1X$, then the image $\bar s \in \pi_1X^*$ is nontrivial for each $s \in S$.

        \item\label{conclusions:non-elementary} $\pi_1X^*$ is not virtually cyclic.

        \item\label{conclusions:B6} $X^*=\langle X\mid Y\rangle$ is  $B(6)$. Thus, $\pi_1 X^*$ acts on the CAT(0) cube complex $\mathcal{C}$ dual to the associated wallspace  on $\widetilde X^*$.

        \item\label{conclusions:k-wall convex} $X^*=\langle X\mid Y\rangle$ satisfies $11$-Wall Convexity.

        \item\label{conclusions:free} $\pi_1 X^*$ acts freely  on $\mathcal{C}$.

        \item\label{conclusions:non-product} $\pi_1X^*\backslash\mathcal C$ is a non-product.
        
        \item\label{conclusions:cocompactness preserved} If $X$ is compact then $\pi_1 X^*$ acts cocompactly  on $\mathcal{C}$.      
        
    \end{enumerate}
\end{theorem}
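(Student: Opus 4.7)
The plan is to build $Y$ via Construction~\ref{const: walspace rank 2}: take $Y = Z_1 \cup K \cup Z_2$ where $Z_1, Z_2$ are rank~$1$ sub-pseudographs inside a suitably chosen parent pseudograph, meeting in a contractible subcomplex $K$ whose diameter is small relative to the systoles. Concretely, I would first invoke Proposition~\ref{prop: no conj} to obtain a malnormal superconvex rank~$2$ pseudograph $W\to X$, then apply (an extension of) Corollary~\ref{cor:2 pseudographs} with defining cyclic words $aba^2b\cdots a^mb$ and $bab^2a\cdots b^na$ for sufficiently large $m,n$ to produce $Z_1, Z_2 \to W$ and hence $Y$. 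By Remark~\ref{rem:diam(K)} we may arrange $\sys(Y)\geq C\cdot \diam(K)$ for any prescribed $C$, and by taking $m,n$ large the presentation $\langle X\mid Y\rangle$ is $C'(\alpha)$ for any fixed $\alpha\leq \frac1{16}$. Moreover $\Aut_X(Y)=1$ by Remark~\ref{rem: trivial aut}.

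Equip $Y$ with the wallspace of Construction~\ref{const: walspace rank 2}. To prove~\eqref{conclusions:B6} I would verify Definition~\ref{def:b6}.\eqref{item:b6.1}--\eqref{item:b6.5}: the small-cancellation~\eqref{item:b6.1}, wallspace cones~\eqref{item:b6.2}, and equivariance~\eqref{item:b6.5} conditions are built in (the last vacuously). Hyperplane convexity~\eqref{item:b6.3} reduces via Lemma~\ref{lem:Greendlinger} to a minimal-complexity disc diagram between a $7$-piece path and a hyperplane carrier in $Y$; the $C'(\frac1{16})$ hypothesis forces every piece to be short relative to $\sys(Y)$, so such a diagram cannot contain a cone-cell and must be a square diagram, which delivers the homotopy. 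Wall convexity~\eqref{item:b6.4} and its strengthening~\eqref{conclusions:k-wall convex} rely on the key structural fact that each wall of $Y$ is a union of at most three hyperplanes (one in $K$ together with its antipodes in $Z_1$ and $Z_2$) whose carriers all lie in $\neb_{\diam(K)}(K)$; a $k$-piece path between two hyperplanes in the same wall either has total length $<\sys(Y)$, forcing a homotopy into a single carrier, or traverses $K$ in a controlled way, and the ratio $\sys(Y)\gg\diam(K)$ again forces the homotopy.

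For~\eqref{conclusions:survival} and~\eqref{conclusions:non-elementary} I would enlarge $\sys(Y)$ past the translation lengths of the finite survival set $S$ and of generators of a rank~$2$ free subgroup of $\pi_1 X$ produced via Proposition~\ref{prop:the core}; a geodesic representative of such an element whose length is below $\sys(Y)$ cannot be the boundary of a disc diagram containing any cone-cell, so Greendlinger's lemma forbids nullity in $\pi_1X^*$. For~\eqref{conclusions:free} (detailed in Section~\ref{sec:freeness}), I would combine torsion-freeness of $\pi_1Y=F_2$ with liftable-shells arguments (Theorem~\ref{thm:no_missing_shells_injective}, Lemma~\ref{lem:liftable shells condition}) applied to the induced presentation over a hypothetical fixed point to rule out torsion and wall-fixing in the dual cube complex $\mathcal C$. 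For the non-product conclusion~\eqref{conclusions:non-product} I would exhibit a pair of strongly separated hyperplanes in $\mathcal C$ arising from axes of independent hyperbolic elements of $\pi_1X^*$ that lie outside $\nclose{\pi_1Y}$, then invoke Corollary~\ref{cor: facing triple implies strongly separated facing triple} applied to $\pi_1X^*\backslash\mathcal C$.

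The main obstacle is cocompactness~\eqref{conclusions:cocompactness preserved}, to be executed in Section~\ref{sec:cocompact}. A cube of $\mathcal C$ corresponds to a collection of pairwise crossing walls of $\widetilde X^*$, so cocompactness amounts to uniform control on the combinatorial configurations of such crossing collections modulo $\pi_1X^*$. The plan is to show that the thin wall-triangle diagrams produced in Section~\ref{sec: wall traingles} together with $11$-wall convexity from~\eqref{conclusions:k-wall convex} bound the diameter of a compact ``core region'' in $\widetilde X^*$ that carries any maximal cube, whence compactness of $X$ furnishes finitely many orbits. I expect this verification, and in particular the deduction that $11$ pieces suffice for wall convexity to support the cocompactness argument, to be the most delicate step: it is where the interplay between the internal geometry of $Y$ (controlling pieces and wall thickness) and the ambient combinatorics of crossing walls in $\widetilde X$ is most subtle, and is what dictates the specific choice of the constants in our construction.
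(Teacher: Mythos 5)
Your construction of $Y$ and your verification of the $B(6)$ and $k$-wall convexity conditions (items~\eqref{conclusions:B6}, \eqref{conclusions:k-wall convex}) line up with the paper's argument, and your treatment of~\eqref{conclusions:survival} is correct. But several later items have real gaps.

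For~\eqref{conclusions:non-elementary} your plan is to choose $\sys(Y)$ larger than the translation lengths of the generators of a free subgroup of $\pi_1X$, so that these generators (and presumably finitely many short words in them) survive in the quotient. This does not prove that $\pi_1X^*$ is not virtually cyclic: surviving a finite list of words is compatible with the image being $\mathbb Z$ (e.g.\ $a\mapsto t$, $b\mapsto t^2$ kills no short word but the image is cyclic). The paper instead proves Proposition~\ref{prop: Y2 in the quotient by Y1}: it produces a \emph{second} rank~$2$ pseudograph $Y_2$, uses Lemma~\ref{lem:liftable shells condition} and Theorem~\ref{thm:no_missing_shells_injective} to show $\pi_1Y_2\hookrightarrow\pi_1X^*$ is injective, and then Lemma~\ref{lem:convexity of induced} to show $\widetilde Y_2$ is convex in $\widetilde X^*$. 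That injectivity of a whole $F_2$ is what you need; a translation-length/systole comparison alone does not deliver it.

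For~\eqref{conclusions:non-product} the phrase ``axes of independent hyperbolic elements lying outside $\nclose{\pi_1Y}$'' and an appeal to Corollary~\ref{cor: facing triple implies strongly separated facing triple} is too coarse. That corollary (via Proposition~\ref{prop: non-product characterization}) is stated for actions that are essential and without fixed points at infinity, which you have not established for the action of $\pi_1X^*$ on $\mathcal C$. The paper bypasses this: it uses the convexly embedded $\widetilde Y_2\subset\widetilde X^*$ from Proposition~\ref{prop: Y2 in the quotient by Y1}, picks two strongly separated hyperplanes of $\widetilde Y_2$, and proves directly that the corresponding walls $W_1,W_2$ of $\widetilde X^*$ are strongly separated, using an auxiliary coarser presentation $X^*_\Sigma = \langle X\mid Y_1,\Sigma\rangle$ (where $\Sigma$ is the convex hull of a connecting geodesic) to show disjointness, and Proposition~\ref{prop:every_cone_between} (Crossing Intermediate Cones) together with a piece/hyperplane diameter bound to rule out a common crossing wall. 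None of that machinery appears in your sketch, and without it the claim that strongly separated hyperplanes exist in $\mathcal C$ is unsupported.

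For~\eqref{conclusions:free} your plan to ``rule out torsion and wall-fixing via liftable shells over a hypothetical fixed point'' does not match any argument I can reconstruct. The actual route is: verify the hypotheses of Wise's properness criterion (Theorem~\ref{thm:proper}) for the rank-$2$ pseudograph wallspace, using the Futer--Wise bound $|\kappa|<\tfrac12\sys(Z)+2\alpha\sys(Z)$ for geodesics in the rank-$1$ pieces (Theorem~\ref{thm:FuterWise}) and a case analysis on whether the relevant hyperplanes lie in the same $Z_i$ or straddle $K$ (Lemma~\ref{lem: wallspace structure freeness}); this gives torsion stabilizers; then one separately shows $\pi_1X^*$ is torsion-free (via $\Aut_X(Y')=1$ and either \cite[Thm 4.2]{WiseBook} or the homology formula of \cite{Arenas24asph}).

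Finally, for~\eqref{conclusions:cocompactness preserved} your plan of bounding a compact ``core region'' directly from wall-triangle thinness is unlikely to close, because wall stabilizers in $\widetilde X^*$ are not cocompact on all of $\widetilde X^*$ and need not be quasiconvex in any hyperbolic sense. The paper's proof is structured around Lemma~\ref{lem: wall stabilizer relatively hyperbolic} (each $\Stab(Z)$ is hyperbolic relative to the stabilizers of its constituent hyperplanes), local relative quasiconvexity (Lemma~\ref{lem:LRHQC}), hemiwallspaces, and the Hruska--Wise relative cocompactness criterion (Proposition~\ref{prop:RelHyp relative Cocompactness}); the wall-triangle thinness (Corollary~\ref{cor: existence of r}) is used inside that framework to show $\mathcal{C}(\Hem(U))$ is $\Stab(U)$-cocompact, not to bound a single compact core of $\widetilde X^*$. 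Your sketch omits the relative hyperbolicity ingredient, which is load-bearing.
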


Below we prove Theorem \ref{thm:main with all properties}.\eqref{conclusions:survival}-\eqref{conclusions:k-wall convex} and \eqref{conclusions:non-product}. 
Theorem \ref{thm:main with all properties}.\eqref{conclusions:free} follows from Theorem~\ref{thm:freeness}, and Theorem \ref{thm:main with all properties}.\eqref{conclusions:cocompactness preserved} follows from Theorem~\ref{thm:cocompactness}.

\begin{proof}[Proof of Theorem \ref{thm:main with all properties}.\eqref{conclusions:survival}]

Let $S=\{g_1,\ldots, g_m\}$ be nontrivial elements.
Let $\tilde x\in \widetilde X$ be a basepoint.
For each $k$, let $J_k\rightarrow \widetilde X$ be the convex hull of the lift $[\tilde x,g_k\tilde x]$. Since $S$ is finite,  there is $n \in \naturals$ with $S \subset B_n(\tilde x)$ where $B_n(\tilde x)$ is the radius $n$ ball at $\tilde x$. Choosing $Y$ so that $\sys(Y)> n$ guarantees that if  $X^*=\langle X\mid Y\rangle $, then 
each lift $J_k\rightarrow \widetilde X^*$ is embedded. 
Hence $g_k$ is  nontrivial  in $\pi_1X^*$.
\end{proof}

\begin{proof}[Proof of Theorem \ref{thm:main with all properties}.\eqref{conclusions:non-elementary}]
This holds by the following proposition.
\end{proof}

\begin{prop}\label{prop: Y2 in the quotient by Y1}
    Let $Y\to X$ be a superconvex rank~$2$ pseudograph. Then for any $\alpha\leq \frac{1}{14}$ there exist $Y_1, Y_2\to Y$ such that 
    \begin{itemize}
        \item the presentation $X^* = \langle X\mid Y_1\rangle$ is $C'(\alpha)$,
        \item $Y_2\to X^*$ is $\pi_1$-injective, and
        \item $\widetilde Y_2\to \widetilde X^*$ is an embedding onto a convex subcomplex.
    \end{itemize}
\end{prop}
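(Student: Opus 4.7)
The plan is to obtain $Y_1$ and $Y_2$ simultaneously as a pair of superconvex rank~$2$ pseudographs for which $\langle X\mid Y_1, Y_2\rangle$ satisfies $C'(\alpha)$, and then use the induced-presentation machinery of Section~\ref{subsec:induced} to reinterpret $Y_2$ as a convex, $\pi_1$-injective subcomplex of $X^* = \langle X\mid Y_1\rangle$.

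First I would apply Proposition~\ref{prop: no conj} to $Y\to X$ to obtain a superconvex rank~$2$ pseudograph $W\to Y$ with $\pi_1W$ malnormal in $\pi_1Y$ and such that no nontrivial element of $\pi_1W$ is conjugate in $\pi_1X$ to an element of $\pi_1$ of a non-diagonal component of $Y\otimes_X Y$. Together these two conditions imply $\pi_1W$ is malnormal in $\pi_1X$, so all non-diagonal components of $W\otimes_X W$ are contractible. Corollary~\ref{cor:2 pseudographs} then furnishes a pair of superconvex rank~$2$ pseudographs $Y_1, Y_2\to W\to Y$ with $\sys(Y_i)$ as large as desired and $\langle X\mid Y_1, Y_2\rangle$ satisfying $C'(\alpha)$. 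Setting $X^*=\langle X\mid Y_1\rangle$, the $C'(\alpha)$ condition on $X^*$ is automatic, since dropping a relator only removes pieces.

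To deduce the remaining assertions about $Y_2\to X^*$, I would pass to the induced cubical presentation $Y_2^*=\langle Y_2\mid Y_2\otimes_X Y_1\rangle$. The universal cover of a component of $Y_2\otimes_X Y_1$ is an intersection of the form $\widetilde Y_2\cap g\widetilde Y_1$, a convex subcomplex of $\widetilde X$ that, by definition, is a cone-piece of $\langle X\mid Y_1,Y_2\rangle$. The $C'(\alpha)$ condition bounds its diameter by $\alpha\sys(Y_1)\leq \tfrac{1}{14}\sys(Y_1)$; in particular no translate of $\widetilde Y_1$ lies inside $\widetilde Y_2$, and each component of $Y_2\otimes_X Y_1$ is compact and contractible with diameter far less than $12\sys(Y_1)$. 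Lemma~\ref{lem:liftable shells condition} then supplies liftable shells for $Y_2^*\to X^*$, and Theorem~\ref{thm:no_missing_shells_injective} yields $\pi_1$-injectivity of $Y_2^*\to X^*$ together with an embedding of the cubical part $\widetilde Y_2\hookrightarrow \widetilde X^*$. Because the attached cones in $Y_2^*$ are over contractible components, $\pi_1Y_2\cong \pi_1Y_2^*$, giving $\pi_1$-injectivity of $Y_2\to X^*$ itself.

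For the convex embedding I would apply Lemma~\ref{lem:convexity of induced} with $\alpha$ as above and $\beta=\alpha$, so that $2\alpha+\beta\leq \tfrac{3}{14}\leq \tfrac12$. The cone-piece hypothesis holds by the $C'(\alpha)$ condition, and the hypothesis on paths in intersections $\widetilde Y_1\cap \widetilde Y_2$ is met because each such intersection is a cone-piece of $\langle X\mid Y_1, Y_2\rangle$, hence of diameter less than $\alpha\sys(Y_1)=\beta\sys(Y_1)$; the alternative $Y_1\subset Y_2$ is excluded exactly as above. This yields the convex embedding $\widetilde Y_2\hookrightarrow \widetilde X^*$.

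The main obstacle is the contractibility of components of $Y_2\otimes_X Y_1$, which ultimately rests on showing that no translate of $\widetilde Y_1$ is contained in $\widetilde Y_2$. Such a containment would produce an unbounded cone-piece between distinct relators, contradicting the $C'(\alpha)$ bound, so the setup through Proposition~\ref{prop: no conj} and Corollary~\ref{cor:2 pseudographs} already rules it out.
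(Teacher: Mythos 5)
Your proposal is correct and takes essentially the same approach as the paper: construct $Y_1,Y_2\to W\to Y$ via Proposition~\ref{prop: no conj} and Corollary~\ref{cor:2 pseudographs}, then apply Lemma~\ref{lem:liftable shells condition}, Theorem~\ref{thm:no_missing_shells_injective}, and Lemma~\ref{lem:convexity of induced} with $\beta=\alpha$. The only (minor) divergence is that you invoke the $C'(\alpha)$ bound of the two-relator presentation directly to get $\diam(Y_2\otimes_X Y_1)<\alpha\sys(Y_1)\leq\tfrac{1}{14}\sys(Y_1)\le 12\sys(Y_1)$, whereas the paper instead replaces $Y_1$ by a further $Y_1'\to Y_1$ to arrange $\sys(Y_1)>12C$; your shortcut is valid and slightly cleaner.
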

\begin{proof}
    By Proposition~\ref{prop: no conj} we can assume that the non-diagonal components of $Y\otimes_X Y$ are contractible.
    Let $Y_1, Y_2 \to Y\to X$ be the two superconvex pseudographs of rank~$2$ as in Proposition~\ref{cor:2 pseudographs}, i.e.\ $\langle X\mid Y_1, Y_2\rangle$ satisfies $C'(\alpha)$. The fiber-product $Y_1\otimes_X Y_2$ consists of contractible components. Let $C$ be the maximal diameter of a connected components of $Y_1\otimes_X Y_2$. By possibly replacing $Y_1$ with a further $Y_1'\rightarrow Y_1\rightarrow X$ we can assume $\sys(Y_1)> 12 C$. The presentation $X^* = \langle X \mid Y_1\rangle$ still satisfies $C'(\alpha)$. The induced map of cubical presentations $Y_2 = Y_2^*\to X^*$ has liftable shells by Lemma~\ref{lem:liftable shells condition} (hence has no missing shells). By Theorem~\ref{thm:no_missing_shells_injective} the map $Y_2 \to X^*$ is $\pi_1$-injective and $\widetilde Y_2\to \widetilde X^*$ is an embedding.
     Lemma~\ref{lem:convexity of induced} with $\beta = \alpha$ implies $\widetilde Y_2\to \widetilde X^*$ is an embedding of a convex subcomplex.
\end{proof}

\begin{proof}[Proof of Theorem \ref{thm:main with all properties}.\eqref{conclusions:B6}]
We verify the  $B(6)$ condition for the wallspace structure described in Construction~\ref{const: walspace rank 2}.

\
 \textbf{Wallspace structure:} By construction, each wall is a union of disjoint hyperplanes that separate $Y$, and all hyperplanes  are 2-sided because they are contractible. Each wall is embedded because $\diam(N(U))< \frac{1}{2} \sys(Y)$ for each hyperplane $U$ in $Y$.  Assuming moreover that $\diam(N(U))< \frac{1}{4}sys(Y)$, it also follows that  the disjoint union of the hyperplane carriers corresponding to hyperplanes in the same wall embeds in $Y$.
 
 \
\textbf{Hyperplane convexity:} 
Let $P \rightarrow Y$ be a path that starts on a vertex $p$ and ends on a vertex $q$, so that both $p$ and $q$ lie
on 1-cells dual to a hyperplane $U$ of $Y$, and let $\tau \rightarrow N(U)$ be a path starting on $q$ and ending on $p$. Since $\langle X \mid Y\rangle$ satisfies the $C'(\frac{1}{16})$ condition, the concatenation $P\tau$ is either nullhomotopic or $P$ is the concatenation of at least $17$ pieces. In particular, if $P$ is the concatenation of at most 7 pieces, then $P$ is path homotopic
in $Y$ to a path $P\rightarrow N(u) \rightarrow Y$.

\    
\textbf{Wall convexity:} 
Suppose that $\diam(N(U))\leq \frac{1}{33}\sys(Y)$ and $\diam(K)\leq \frac{1}{33}\sys(Y)$ (which can be ensured by Remark~\ref{rem:diam(K)}). Let $P$ be a path in $Y$ that starts and ends with 1-cells dual to the same wall $U$ of $Y$. 
If $P$ is the concatenation of at most $7$ pieces, then $|P|\leq 7\cdot\frac{1}{16}\sys(Y)$.

If $P$ intersects more than one hyperplane in $U$. First suppose that $P\to Z_i$ where $Z_i$ is as in Construction~\ref{const: walspace rank 2}.
Then $|P|\geq \frac{1}{2}\sys(Y)- \frac{2}{33}\sys(Y)$ since the first and last edges of $P$ are dual to hyperplanes that are also dual to antipodal edges of $\sigma_i$ where $\sigma_i$ is a closed path in of $Z_i$ realizing its systole (as in Construction~\ref{const: walspace rank 2}).
Now suppose that the first edge of $P$ belongs to $Z_1$ but not $Z_2$, and the last edge of $P$ belongs to $Z_2$ but not $Z_1$. Then $P$ has a an initial subpath $P'$ that ends in $K$. Then, similarly as above $|P|\geq |P'|\geq \frac{1}{2}\sys(Y) - \frac{2}{33}\sys(Y)$.

Thus $\frac{1}{2}- \frac{2}{33} < 7\cdot\frac{1}{16}$, which is a contradiction. Thus $P$ starts and ends on the same hyperplane $u$ in $U$, and by hyperplane convexity (see above), $P$ is path-homotopic into $N(u)$.

\textbf{Equivariance:} This holds since $\Aut_X(Y)$ is trivial by the choice made in Construction~\ref{const: walspace rank 2}. See Remark~\ref{rem: trivial aut}. 
\end{proof}

\begin{proof}[Proof of Theorem~\ref{thm:main with all properties}.\eqref{conclusions:k-wall convex}]

The proof of wall convexity in \ref{thm:main with all properties}.\eqref{conclusions:B6} generalizes to an arbitrary $k$ assuming that $\alpha<\frac{1}{2k}$ and $\sys(Y)$ is sufficiently large compared to $\diam(N(U))$ and $\diam(K)$.
\end{proof}

\begin{proof}[Proof of Theorem \ref{thm:main with all properties}.\eqref{conclusions:non-product}]
By Proposition~\ref{prop: Y2 in the quotient by Y1} there exist superconvex rank $2$ pseudographs $Y_1, Y_2\to X$ such that $Y_2\to X^*$ is $\pi_1$-injective where $X^* = \langle X \mid Y_1\rangle$, and $\widetilde Y_2 \to \widetilde X^*$ is an embedding as a convex subcomplex.  By Part~\eqref{conclusions:B6} we can assume $X^*$ is $B(6)$.
We will show the dual $\mathcal C$ of $X^*$ is not a product. By  Proposition~\ref{prop: non-product characterization}, it suffices to show $\mathcal C$ has a pair of strongly separated hyperplanes.

Recall that by Lemma~\ref{lem:bounded wall pieces}, the diameter of wall-pieces of $Y$ is bounded by some constant $M'$. Since $Y_1\otimes_X Y_2$ has contractible components, there exists a constant $M''$ bounding the diameter of the cone pieces between $Y_1$ and $Y_2$. Let $M=\max\{M',M''\}$. Note that for any $\Sigma\to Y_2$ the cone-pieces between $\Sigma$ and $Y_1$ are bounded by $M$.
Since $Y$ us a compact pseudograph, there is also a bound $B$ on the diameter of hyperplanes of $Y$. 

Let $U_1, U_2$ be strongly separated hyperplanes in $\widetilde Y_2$  at distance greater than the maximum of $7\max\{\frac{sys(Y_1)}{\alpha}, M\}$ and $2M+B$.
Consider the hyperplanes in  $\widetilde X^*$ extending $U_1, U_2$, and continue to denote them by $U_1, U_2$.
We will prove that $U_1,U_2$ lie in distinct walls $W_1, W_2$ are strongly separated (meaning $W_1, W_2$ do not cross and no other wall crosses both of them). 

Let $\sigma$ be a geodesic path connecting $U_1$ to $U_2$ in $\widetilde X^*$. 
As $\widetilde Y_2$ is convex,  $\sigma\rightarrow \widetilde X^*$ lies in $\widetilde Y_2$. 
Let $\Sigma$ be the cubical convex hull of $\sigma$ in $\widetilde X$. Note that $\Sigma$ is contractible. 
By possibly replacing $Y_1$ by another superconvex rank~$2$ pseudograph mapping into $Y_1$, we can assume $\sys(Y_1)> \alpha \diam(\Sigma)$.

Consider the following wallspace on $\Sigma$: there is a noteworthy wall $U$ consisting of the two hyperplanes $U_1\cap \Sigma$ and $U_2\cap \Sigma$, but every other wall consists of a single hyperplane of $\Sigma$.
The cubical presentation
$X^*_{\Sigma}=\langle X \mid Y_1, \Sigma\rangle$ has the same $\pi_1$ as $X^*$, but has a coarser wallspace structure. We use  $X^*_{\Sigma}$ to facilitate  the proof of the strong separation of $W_1,W_2$. We will denote the wall of $X^*_{\Sigma}$ containing $U$ by $W$.

Since $X^*$  satisfies $C'(\alpha)$, so does $X^*_{\Sigma}$. Indeed, since $\sys(Y_1)> \alpha \diam(\Sigma)$, then $\alpha \diam(Y_1\cap \Sigma)< \sys(Y_1)$, and the pieces coming from $X^*$ still satisfy this condition in $X^*_{\Sigma}$. 
The $B(6)$ condition (using the  wallspace on $\Sigma$ described above, and the standard structure on $Y_1$) is also satisfied by $X^*_{\Sigma}$. Indeed, $Y_1$ already satisfies all the hypothesis of  $B(6)$; using the contractibility of $\Sigma$,  it is straightforward that the choice of walls on $\Sigma$ satisfies Hypothesis~\eqref{item:b6.2} of the $B(6)$ condition, and, again since $\Sigma$ is contractible, hyperplane convexity and Hypothesis~\eqref{item:b6.4} of the $B(6)$ condition follow vacuously. To finish verifying the $B(6)$ condition, we need to check the Wall Convexity condition only for the walls in $\Sigma$, since this condition was verified for the walls in $Y_1$ in the proof of 
Part~\eqref{conclusions:B6} of this theorem.
Let $S$ be a path in $\Sigma$ that starts and ends with 1-cells
dual to the same wall of $\Sigma$. Suppose $S$ is the concatenation of at most 7 pieces. The diameter of a piece in $\Sigma$ is at most $\max\{\frac{sys(Y_1)}{\alpha},M\}$ where $M'$ is the bound on wall-pieces from Lemma~\ref{lem:bounded wall pieces}. Since $U_1, U_2$ were chosen at distance $>7\max\{\frac{sys(Y_1)}{\alpha},M\}$,  it follows that the path $S$ starts and ends on the same hyperplane of $\Sigma$, and is thus nullhomotopic.
   
Let $\mathcal{C}_\Sigma$ be the cube complex dual to the wallspace on $\widetilde X^*_{\Sigma}$ described above and note that there is a natural projection $\mathcal{C} \to\mathcal{C}_\Sigma$.
By construction of $\mathcal{C}_\Sigma$ the hyperplanes of $\mathcal{C}$ dual to $W_1, W_2$ project to a single hyperplane  of $\mathcal{C}_\Sigma$. Since a hyperplane in a CAT(0) cube complex cannot self-cross, we deduce that $W_1,W_2$ cannot cross in $\widetilde X^*$.

We now argue that $W_1$ and $W_2$ are strongly separated in $\widetilde X^*$.
Suppose there exist a wall $W'$ in $\widetilde X^*$ that crosses both walls $W_1$ and $W_2$. We claim that $W'$ crosses $\Sigma$ in a hyperplane that we denote by $U'$. Let $V_1, V_2$ be hyperplanes of cone-cells of the wall $W$ (which is the wall of $\widetilde X^*_{\Sigma}$ ``containing'' $W_1, W_2$) crossed by $W'$. 
By Proposition~\ref{prop:every_cone_between}, every cone of $W$ between $V_1$ and $V_2$ is crossed by $W'$. In particular $W'$ crosses $\Sigma$ which separate $W_1,W_2$ in $W$.
Moreover, considering the $\Sigma$ cone-cell in the ladder, we see that $U'\cap \Sigma$ and $U_i\cap \Sigma$ either cross each other (which  happens if 
the first or last cone-cell maps to  $\Sigma$), or are dual to edges in a single piece of $\langle X\mid Y_1, \Sigma\rangle$ which is either a  wall-piece of $\Sigma$, or  a  cone-piece of $\Sigma$ with $Y_1$.
Thus, $\dist(U_1, U_2)\leq \dist(U_1, U')+\dist(U_2, U')+\diam(U')\leq 2M+B$. But by our choice of $U_1, U_2$ at distance $\dist(U_1, U_2)> 2M+B$, which yields a contradiction.

Finally,  $\mathcal{C}$ is not a quasiline  by Part~\eqref{conclusions:non-elementary}.
\end{proof}

Proposition~\ref{prop:every_cone_between}  is the remaining ingredient needed in the proof of Theorem \ref{thm:main with all properties}.\eqref{conclusions:non-product}. 
It engages with the following notions:

A \emph{$W$-ladder} $L$ is a ladder mapping to $\widetilde X^*$
such that 
\begin{itemize}
\item $L$ is the union of $2$-cells $C_1,C_2,\ldots, C_m$,
\item each $C_i$ is  a square or a cone-cell,
\item $C_i\cap C_{i+1}$ is a 1-cube $e_i$, and
\item each $e_i$ is dual to a hyperplane of $W$.
\end{itemize}

\begin{rem}[Squares as cones]\label{rem:add squares}
Let $X^*=\langle X \mid Y_1, Y_2, \ldots, \rangle$ be a cubical presentation that satisfies $C'(\alpha)$.
Let $s$ be a 2-cube of $X$. If $s$ does not lie in any $Y_i$, then we can add $s$ to the cubical presentation
to obtain
$\langle X \mid s, Y_1,Y_2,\ldots, \rangle$
which satisfies the same $C'(\alpha)$ condition.
Indeed, $\sys(s)=\infty$, and if  $s$ is not  in any $Y_i$,
 any piece between $s$ and any $Y_j$ is already a wall piece. It follows that we can add as many squares as needed to $X^*$ to form a $C'(\alpha)$ cubical presentation $X^*_\square$ where every square lies in a cone. This will facilitate the use of the Ladder Theorem in Proposition~\ref{prop:every_cone_between}. 
\end{rem}

\begin{prop}[Crossing Intermediate Cones]\label{prop:every_cone_between}
Let $X^*$ be $B(6)$.
Let $W,W'$ be walls in $\widetilde X^*$.
Assume there are distinct hyperplanes or cones  $V_1,V_2$ of $W$ that are crossed by $W'$. Then every cone $Y$ of $W$ between $V_1$ and $V_2$ is 
crossed by $W'$.

Moreover, there is a
ladder $L$ in $X^*_\square$,
containing a $W$-ladder $K$,
and a $W'$-ladder $K'$.
And $K,K'$ start and end on the first and last cells of $L$.
And $L$ is \emph{quasi-2-collared} in the sense that every external cell of $L$ is a cell of  $K$ or $K'$ or both. See Figure~\ref{fig:2collaredladder}.

\begin{figure}
    \centering
\includegraphics[width=0.8\linewidth]{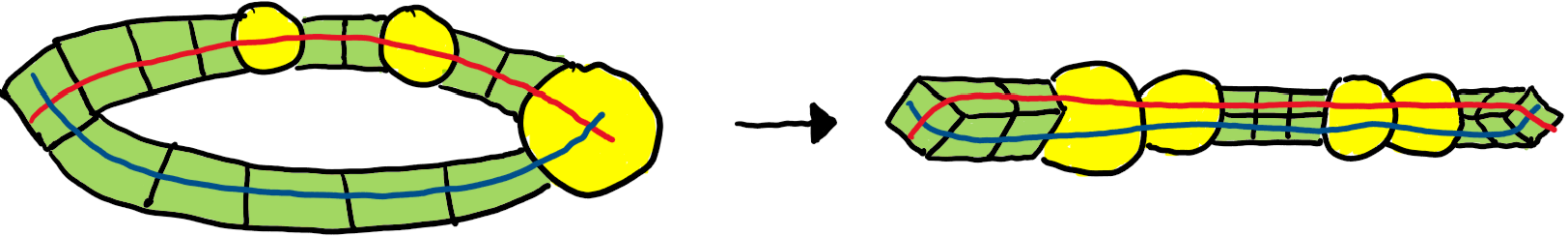}
    \caption{A $2$-collared diagram exhibiting two crossing between walls $W$ and $W'$ on the left, and the corresponding quasi-$2$-collared  ladder on the right.}
\label{fig:2collaredladder}
\end{figure}
\end{prop}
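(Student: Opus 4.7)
The plan is to build a minimal-complexity disc diagram whose boundary traces along $W$ through every cone between $V_1$ and $V_2$ and returns along $W'$ through its two crossings with $V_1$ and $V_2$; then I invoke the Ladder Theorem to extract the stated structure.

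By Remark~\ref{rem:add squares}, work in $X^*_\square$, so every $2$-cell of a disc diagram is a cone-cell. For $i=1,2$, the crossing of $W$ and $W'$ at $V_i$ produces a point $p_i$ on an edge dual simultaneously to a hyperplane of $W'$ and to a hyperplane of (the cone) $V_i$. Choose a path $\sigma$ in the carriers of $W$ from $p_1$ to $p_2$ that threads in sequence through the cones/hyperplanes of $W$ from $V_1$ to $V_2$---in particular through every intermediate cone---and a path $\tau$ in the carriers of $W'$ from $p_2$ to $p_1$. Let $D \to X^*_\square$ be a disc diagram of minimal complexity with boundary $\sigma \cdot \tau$, which exists since this loop is nullhomotopic in $X^*$.

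Next, I would show $D$ is a ladder using Lemma~\ref{lem:Greendlinger} and Theorem~\ref{thm:ladder theorem}. The goal is that every exposed shell, spur, or corner-of-square of $D$ occurs at the cone-cell mapping to $V_1$ or to $V_2$. An interior exposed shell on the $\sigma$-side would have outerpath a concatenation of at most $4$ pieces lying in the carriers of $W$; Hyperplane Convexity (Definition~\ref{def:b6}.\eqref{item:b6.3}) and Wall Convexity (Definition~\ref{def:b6}.\eqref{item:b6.4}) then let us replace a neighborhood of that shell with a path-homotopic collar inside a single carrier of $W$, strictly reducing complexity and contradicting minimality; the $\tau$-side is symmetric. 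Corners of squares and spurs are ruled out similarly, since they would force $\sigma$ or $\tau$ to leave the carrier of $W$ or $W'$ unnecessarily. Hence $D$ has at most two exposed features, and by the Ladder Theorem, $D$ is a ladder $L$.

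Finally, the $\sigma$-side of $L$ is a sequence of cone-cells and pseudogrid squares whose shared $1$-cubes are dual to hyperplanes of $W$; this is a $W$-ladder $K$ starting at the cone $V_1$, ending at $V_2$, and visiting every intermediate cone of $W$. Symmetrically the $\tau$-side yields a $W'$-ladder $K'$. Each intermediate cone $Y$ of $W$ appears as a cone-cell of $K$, whose $1$-cubes shared with $K'$ are dual to a single hyperplane of $W'$, witnessing the crossing of $W'$ with $Y$. The quasi-$2$-collared structure follows because every external cell of $L$ lies on the $\sigma$-side (in $K$) or the $\tau$-side (in $K'$). The main obstacle I expect is the minimality-and-replacement step: precisely identifying how the $B(6)$ convexity conditions rule out each type of interior exposed feature when the boundary is a concatenation of two carrier paths, and tracking how successive wall-pieces and cone-pieces along $\sigma$ and $\tau$ combine on a would-be shell's outerpath.
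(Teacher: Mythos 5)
Your proposal takes a genuinely different route from the paper. The paper's proof does not fill a single global disc and then invoke Greendlinger or the Ladder Theorem; instead it starts from a reduced $W$-ladder $K$ through the cones of $W$, breaks $K$ into sub-ladders $K_i$ at exactly the cells crossed by $W'$, builds for each $i$ a $2$-collared diagram $L_i$ by gluing a $W'$-ladder $K'_i$ to $K_i$ along corner cone-cells and filling the resulting annulus, then amalgamates $L_1,\dots,L_m$ by combining cone-cells mapping to the same cone and reduces. This piecewise construction is what forces every intermediate cone of $W$ to appear as a cone-cell of the final ladder, and it also handles the case where the union of a $W$-ladder and a $W'$-ladder is a M\"obius strip, a possibility your proposal does not address.

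There are two concrete gaps in your argument. First, nothing in your setup forces the intermediate cones of $W$ to be cone-cells of your minimal-complexity $D$: your boundary $\sigma$ is a combinatorial path in the $1$-skeleton of the carriers of $W$, and a minimal filling of $\sigma\tau$ may bypass the cones entirely (or place them on the ``outside''), so the final step where you read off ``each intermediate cone $Y$ of $W$ appears as a cone-cell of $K$'' is unjustified. Second, the replacement step misapplies the $B(6)$ convexity conditions. You write that an interior exposed shell on the $\sigma$-side ``would have outerpath a concatenation of at most $4$ pieces'' --- but for a shell of degree $\le 4$ it is the \emph{innerpath} that is at most $4$ pieces; the outerpath can be long. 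More importantly, Hyperplane Convexity and Wall Convexity apply only to paths whose endpoints lie on $1$-cells dual to the same hyperplane (resp.\ wall); a generic exposed shell's innerpath has no such endpoint constraint, so these conditions do not directly license the replacement you invoke. This is exactly where the paper avoids trouble by never forming a single global filling and instead arguing locally within each $J_i$ that the dual curve of $W'$ cannot recross, using minimality of $D_i$, before amalgamating.
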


\begin{remark}\label{rmk: reduced moves}
    In the proof of Proposition~\ref{prop:every_cone_between} we use the notion of a reduced diagram. A diagram is \emph{reduced} if it cannot be simplified by performing any of the following moves: removing square bigons, combining cone-cells, absorbing a square into a cone-cell, absorbing a cornsquare into a cone-cell, and replacing internal non-essential cone-cells with square diagrams. See the discussion following~\cite[Def~3.11]{WiseBook}.
\end{remark}

\begin{proof}
Let $K$ be a reduced $W$-ladder in $\widetilde X^*$ that starts and ends with a square or cone-cell
containing the intersection at $V_1$ and $V_2$.

Viewing $K$ as a $W$-ladder in $\widetilde X^*_\square$,  we break $K$ into a sequence of sub-$W$-ladders
$K_1,K_2,\ldots, K_m$
where for each $i$ the first square/cone-cell of $K_{i+1}$ is the last square/cone-cell of $K_{i}$, 
and where the squares/cone-cells of $K_i$ crossed by $W'$ are precisely the first and last ones. Since we are working in $\widetilde X^*_\square$, we treat the first and last cells of each $K_i$ as cone-cells.

We claim that for each $i$ there is a diagram $L_i$ that is $2$-collared by $K_i$ and a $W'$-ladder $K_i'$.
Let $K_i'$ be any $W'$-ladder starting and ending at the first and last cone-cells of $K_i$.
Let $J_i$ be the union of $K_i$ and $K_i'$ along their first and last cone-cells. We can assume without loss of generality that the combined cone-cells of $K_i, K_i'$ are the same, since any two paths in a cone are subpaths of a closed path in the cone.  
Thus $J_i$ is an annulus or Möbius strip.

Let $D_i$ be a diagram whose boundary path $P_i$ is an immersed path in $J_i$  generating $\pi_1 J_i$ and where $P_i$ does not traverse any edge of $K_i$ dual to $W_i$. 
Moreover choose $K_i'$ and $D_i$  so that $D_i$ has minimal complexity among all such choices.
If $P_i$ traversed an edge of $K_i'$ dual to $W_i'$, then following this dual curve within $D_i$, we see that it cannot exit on the $K_i$ side by our non-intersection assumption on $K_i$, and hence it exits on the $K_i'$ side. This contradicts the minimality of $D_i$.
Thus $P_i$ is a boundary cycle of $J_i$, and $J_i$ is not a Möbius strip.
The union $L_i = J_i\cup_{P_i} D_i$ is a disc diagram, proving the claim.

We now amalgamate $L_1,L_2,\ldots, L_m$ by combining cone-cells mapping to the same cone, using that any two paths in a cone are subpaths of a closed path. 
See Figure~\ref{fig:combined}.

\begin{figure}
    \centering    \includegraphics[width=1\linewidth]{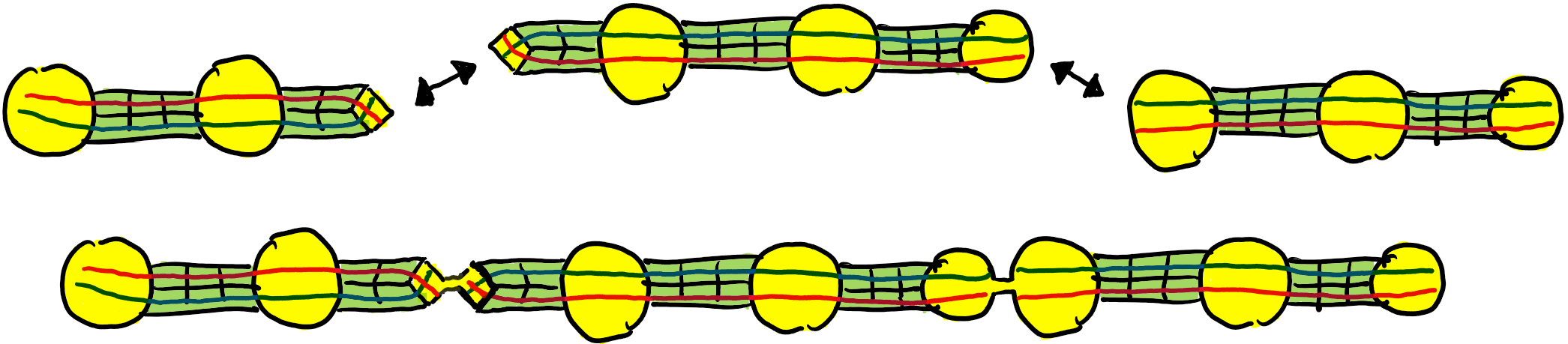}
    \caption{Combining  ladders.}
    \label{fig:combined}
\end{figure}

Let $\bar L$ be the combination of $L_1,L_2,\ldots, L_m$.
We obtain $L$ from $\bar L$ by reducing to obtain a  disc diagram using the reduction moves in Remark~\ref{rmk: reduced moves}.
These moves preserve the quasi-2-collaring by $W$ and $W'$.
Moreover, our hypothesis that $W'$ only crosses at the first and last cone-cells of each $K_i$ ensures that $K$ is transformed to a reduced ladder in $X^*_\square$ visiting the same sequence of cones as the original $W$-ladder.
\end{proof}

\section{Freeness}\label{sec:freeness}
In this section, we prove Part \eqref{conclusions:free} of Theorem~\ref{thm:main with all properties}, i.e.\ that the pseudograph $Y\to X$ can be chosen so that the action of $\pi_1 X^*$ on $\mathcal C$ is free. 
We start by recalling a criterion that is often useful for proving properness  in the setting of cubical small-cancellation.

A hyperplane $U$ is \emph{$m$-proximate} to a $0$-cube $v$ if there is a path $P =P_1, \ldots, P_m$
such that each $P_i$ is either a single edge or a piece, $v$ is the initial
vertex of $P_1$ and $H$ is dual to an edge in $P_m$. A wall is \emph{$m$-proximate} to $v$ if it
has a hyperplane that is $m$-proximate to $v$. 

 A hyperplane $U$ of a cone over $Y$ is \emph{piecefully convex} if the following holds: For any path $\tau\rho \rightarrow Y$ with endpoints on $N(U)$, if $\tau$ is a geodesic and $\rho$ is trivial or lies in a piece of $Y$ containing an edge
dual to $U$, then $\tau\rho$ is path-homotopic in $Y$ to a path $\mu \rightarrow N(U)$.

\begin{theorem}[{\cite[Thm~5.44 and Cor~5.45]{WiseBook}}]\label{thm:proper}
 Let $X^*= \langle X \mid \{Y_i\} \rangle $ be a  $B(6)$ cubical presentation that satisfies the following conditions:
 \begin{enumerate}
\item \label{item:prop.1} Each hyperplane $H$ of each cone over $Y_i$ is piecefully convex.
\item \label{item:prop.2} Let $\kappa \rightarrow Y \in \{Y_i\}$ be a geodesic with endpoints $p, q$. Let $U_1$ and $U'_1$ be distinct hyperplanes in the same wall $W_1$ of $Y$. Suppose $\kappa$ traverses a 1-cell dual to $U_1$, and either $U'_1$ is 1-proximate to $q$ or $\kappa$ traverses a 1-cell dual to $U'_1$. Then there is a wall $W_2$ in $Y$ that separates $p, q$ but is not
2-proximate to $p$ or $q$.
\item \label{item:prop.3} Each infinite order element of $\Aut_X(Y_i)$ is cut by a wall.
 \end{enumerate}

Then $\pi_1X^*$ acts on the dual CAT(0) cube complex  $\mathcal C$ with torsion stabilizers.
\end{theorem}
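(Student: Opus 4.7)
The plan is to prove the contrapositive: every infinite-order element $g\in \pi_1 X^*$ must \emph{skew} some wall $W$ of the $B(6)$ wallspace on $\widetilde X^*$, meaning $gW^+\subsetneq W^+$ (or the reverse). Since a $0$-cube of $\mathcal{C}$ is a consistent choice of halfspaces and a stabilizer must preserve each chosen halfspace, a skewing element cannot fix any $0$-cube, yielding the torsion stabilizer conclusion. So the task reduces to producing a wall skewed by an arbitrary infinite-order element.

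First I would dichotomize based on whether $g$ stabilizes some cone $\widetilde{Y}_i$ of $\widetilde X^*$. If it does, then a conjugate of $g$ acts as an infinite-order element of $\Aut_X(Y_i)$. Hypothesis~\eqref{item:prop.3} directly furnishes a wall $W$ of $Y_i$ cut by $g$, and the wall extends to a wall of $\widetilde X^*$ whose halfspaces are strictly nested under the action of $g$; this case is essentially a translation of the hypothesis through the $B(6)$ wall-extension machinery. Otherwise $g$ acts hyperbolically on the cubical part $\hat X$ and admits a combinatorial quasi-axis $\gamma$ that is not fully contained in any single carrier of a cone. Some hyperplane $H$ of $\hat X$ crossed by $\gamma$ extends to a wall $W$ of $\widetilde X^*$, and the goal is to show that $g$ skews $W$. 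Assume for contradiction it does not; then one can exhibit a minimal-complexity disc diagram $D\rightarrow X^*$ witnessing a crossing pattern along which $W$ and $gW$ maintain compatible orientations, and apply the cubical Greendlinger's Lemma~\ref{lem:Greendlinger} together with the Ladder Theorem~\ref{thm:ladder theorem} to constrain the shape of $D$.

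The main obstacle is the last step: pieceful convexity~\eqref{item:prop.1} prevents a wall from reentering its carrier through a short arc of pieces, while~\eqref{item:prop.2} forbids two distinct hyperplanes of the same wall from being simultaneously $1$- or $2$-proximate along the candidate axis, so each exposed shell of $D$ is forced to expose a situation that violates one of these conditions or terminates the ladder. Orchestrating this into an actual contradiction requires an induction on either the number of cones met by $\gamma$ or the complexity of the diagram, with the payoff that each reduction step either absorbs a cone-cell compatibly with the chosen wall orientations or produces the desired skewing. Recognising conditions~\eqref{item:prop.1} and~\eqref{item:prop.2} as precisely what makes the induction close is the conceptual crux, and is what ties this theorem into the broader wall-based small-cancellation toolkit of~\cite{WiseBook}.
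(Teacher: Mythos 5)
This theorem is cited from \cite[Thm~5.44 and Cor~5.45]{WiseBook}; the present paper does not contain a proof of it, so there is no ``paper's proof'' to compare against. Your task was therefore to reconstruct an external argument, and the reconstruction, while reasonable in outline, has a genuine gap exactly where the real work lies. Your Case~2 --- the case where $g$ does not stabilize a cone --- is almost entirely aspirational. You assert that some hyperplane $H$ crossed by a quasi-axis of $g$ extends to a wall $W$ that $g$ skews, but this is not automatic: walls near the boundary of a fundamental domain of the axis can readily cross $gW$, so the choice of $W$ matters and requires an argument (typically the wall is chosen ``deep inside'' a long axis segment, which requires knowing the wall metric is proper, which is essentially the theorem). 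You then say a minimal-complexity disc diagram witnessing non-skewing is constrained by Greendlinger's Lemma, the Ladder Theorem, and conditions~\eqref{item:prop.1}, \eqref{item:prop.2}, and that ``orchestrating this into an actual contradiction requires an induction'' whose closing step you describe but do not carry out. That is precisely the content of Wise's argument, and naming the crux is not the same as supplying it.

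A few further issues: your appeal to condition~\eqref{item:prop.3} in Case~1 needs the extension step justified. A wall of $Y_i$ cut by $g$ extends to a wall of $\widetilde X^*$ that is a union of hyperplanes spread across many cones and hyperplanes of $\hat X$, and the claim that $g$ skews the extended wall (rather than, say, stabilizing a hyperplane of the wall far from the cone) does not follow just from $g$ being cut by the wall inside $Y_i$. Also, the underlying strategy of Wise's book (as best I can tell from the surrounding context and the structure of conditions~\eqref{item:prop.1}--\eqref{item:prop.2}) is a \emph{linear separation} argument: one shows that the number of walls separating two $0$-cubes of $\widetilde X^*$ grows without bound in their distance, using condition~\eqref{item:prop.2} to manufacture a new non-proximate separating wall each time the geodesic crosses a cone. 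The torsion-stabilizer conclusion is then a consequence for vertex stabilizers, with condition~\eqref{item:prop.3} handling the cone-elliptic case. Your ``every infinite-order element skews a wall'' framing is logically compatible with this but is not obviously easier to prove, and you have not shown it is achievable without first establishing the separation estimate you are trying to bypass.
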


The following result shows that if    $Z\rightarrow X$ is homotopy equivalent to a circle, then $Z$  can be chosen so that the cubical presentation $X^*=\langle X \mid Z \rangle$  satisfies the conditions of Theorem~\ref{thm:proper}. We state it in a way that will simplify the proof of Lemma~\ref{lem: wallspace structure freeness} (stated below).

\begin{theorem}[{\cite[Thm~3.5]{FuterWise}}]\label{thm:FuterWise}
    Let $X^*= \langle X \mid Z \rangle $ be a cubical presentation with $Z$ homotopy equivalent to a circle, and satisfying the $C'(\alpha)$ condition for $\alpha\leq \frac{1}{20}$ and where $\diam (N(U)) < \alpha \sys(Z)$ for every hyperplane of  $Z$.  Then  $X^*= \langle X \mid Z \rangle $ satisfies the conditions of Theorem~\ref{thm:proper}. Moreover, for every geodesic $\kappa \rightarrow Z$:
    $$
   |\kappa| < \frac{1}{2}\sys(Z) + 2\alpha \sys(Z).$$
\end{theorem}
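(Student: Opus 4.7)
The plan is to first establish the geodesic length bound, which is the key quantitative input, and then use it together with the small-cancellation hypothesis to verify the three conditions of Theorem~\ref{thm:proper}. Note that the $B(6)$ hypothesis of Theorem~\ref{thm:proper} is already provided by Lemma~\ref{lem: cyclic wallspace b6} since $\alpha\leq\frac{1}{20}\leq\frac{1}{16}$, so it suffices to verify Conditions~(1)--(3).

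\textbf{The geodesic length bound.} I would first show that $\widetilde Z$ lies in the closed $\alpha\sys(Z)$-neighborhood of a lifted systolic axis $\tilde\sigma\subseteq\widetilde Z$. Because every hyperplane $U$ of $Z$ satisfies $\diam(N(U))<\alpha\sys(Z)$, and because $\pi_1 Z\cong\integers$ is generated by the translation along $\tilde\sigma$, any $0$-cube of $\widetilde Z$ is joined to $\tilde\sigma$ by a short path through hyperplane carriers. For a geodesic $\kappa\to Z$ with endpoints $p,q$, lift to $\tilde\kappa\subseteq\widetilde Z$, project $\tilde p,\tilde q$ onto $\tilde\sigma$, and compare $\tilde\kappa$ with the two arcs of $\sigma$ joining these projections in $Z$. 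The shorter such arc has length at most $\tfrac12\sys(Z)$, and the cost of reaching $\tilde\sigma$ and coming back is at most $2\alpha\sys(Z)$. This yields $|\kappa|<\tfrac12\sys(Z)+2\alpha\sys(Z)$.

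\textbf{Pieceful convexity (Condition~(1)).} Given $\tau\rho\to Z$ with $\tau$ geodesic, $\rho$ inside a piece containing an edge dual to a hyperplane $U$, and endpoints of $\tau\rho$ on $N(U)$: suppose the path is not path-homotopic into $N(U)$. Close it up with a short path in $N(U)$ between the two endpoints, of length $<\diam(N(U))<\alpha\sys(Z)$. The resulting loop is essential, so its length is $\geq\sys(Z)$; but its length is at most $(\tfrac12+2\alpha)\sys(Z)+\alpha\sys(Z)+\alpha\sys(Z)<\sys(Z)$ for $\alpha\leq\frac{1}{20}$, a contradiction.

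\textbf{Conditions~(2) and (3).} For (3), since $Z$ is compact, $\Aut_X(Z)$ is finite, so the condition is vacuous. For (2), the walls of $Z$ are antipodal hyperplane pairs along $\sigma$. The hypothesis that $\kappa$ traverses a hyperplane $U_1$ dual to $\sigma$ and either traverses or is $1$-proximate to the antipodal $U_1'$ forces the projection of $\kappa$ to $\sigma$ to span essentially the full half-systole. A wall is $2$-proximate to $p$ precisely when one of its hyperplanes lies within roughly $3\alpha\sys(Z)$ along $\sigma$ of $p'$ or of its antipode, and similarly for $q$; this carves out at most four bad intervals of total length $O(\alpha)\sys(Z)$. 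The walls separating $p,q$ occupy a $\sigma$-arc of length approximately $d_\sigma(p',q')$, and the length bound guarantees that after removing the bad intervals there remains a nonempty arc of good walls whenever $\alpha\leq\frac{1}{20}$; any wall in this good region serves as $W_2$.

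The main obstacle is the careful bookkeeping in Condition~(2), where one must precisely identify which walls separate $p$ from $q$, isolate those that are $2$-proximate to an endpoint through either hyperplane of the wall (including via the antipodal hyperplane wrapping around $\sigma$), and verify that the threshold $\alpha\leq\frac{1}{20}$ leaves a nonempty remainder. The length bound from the first step is what makes this counting argument tight.
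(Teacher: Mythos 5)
This result is stated in the paper as a citation to \cite[Thm~3.5]{FuterWise}, with only a remark explaining why the auxiliary hypothesis there (that $Z-U$ is contractible for each hyperplane $U$) can be dropped; there is no proof of the theorem in the paper itself, so there is nothing internal to compare your attempt against. Judged on its own merits, your overall architecture (prove the length bound, then feed it into pieceful convexity and Conditions~(2),(3)) is the natural one, and the pieceful convexity computation is essentially the same calculation that reappears in the paper's proof of Lemma~\ref{lem: wallspace structure freeness}.

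However, the central step of your argument has a real gap. You assert that $\widetilde Z$ lies in the $\alpha\sys(Z)$-neighborhood of the lifted systolic axis $\tilde\sigma$, justifying this only by ``every hyperplane carrier is small and $\pi_1Z\cong\integers$.'' This does not follow. The hyperplane-diameter bound controls the \emph{width} of $\widetilde Z$ transverse to each hyperplane, but says nothing about how far a vertex can be from $\tilde\sigma$: a rank-$1$ pseudograph satisfying all the stated hypotheses can still contain long tree-like branches hanging off the systolic cycle, with every hyperplane tiny, and a geodesic running out to the tip of such a branch and back across the circle would violate $|\kappa|<(\tfrac12+2\alpha)\sys(Z)$. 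Notice that the hypothesis of Futer--Wise which this paper drops --- $Z-U$ contractible for each hyperplane $U$ --- is precisely what forbids such branches (it forces every hyperplane to cross $\sigma$), and the paper's remark explicitly names this as the role of that hypothesis. Your sketch neither invokes that hypothesis nor supplies a replacement, so the claimed neighborhood containment, and hence the length bound that drives the rest of the proof, is unsubstantiated. To repair this you would need either to restore the no-branches hypothesis, or to give an actual argument (presumably the ``calculations therein'' the remark alludes to) showing the length bound holds for geodesics in the presence of branches --- and your Condition~(2) bookkeeping, which is already the vaguest part of the sketch, would also need to account for walls supported on branch hyperplanes that never meet $\sigma$.
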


\begin{remark}
   Theorem~\ref{thm:FuterWise} as stated in~\cite{FuterWise} assumes moreover that $Y - U$ is contractible for each hyperplane $U$ of $Y$. The reason for this assumption is that then the wallspace structure in Construction~\ref{const: rank 1 wall} is slightly simplified, since all hyperplanes in $Y$ must then cross the geodesic $\sigma$ in the construction. However, this hypothesis is unnecessary for the calculations therein, so we omit it.
\end{remark}

\begin{lemma}\label{lem: wallspace structure freeness}
   Let $X^*=\langle X \mid Y \rangle $ be a cubical presentation that satisfies the $C'(\alpha)$ condition for $\alpha\leq\frac{1}{16}$, and where $Y$ is a superconvex rank~$2$ pseudograph that is the union of two rank~$1$ pseudographs  $Z_1$ and $Z_2$, which are locally convex in $Y$. 
   Let $M=\min_i\{sys(Z_i)\}$ and  $\diam (Z_1 \cap Z_2)\leq \beta M$ where $\alpha + \beta <\frac{1}{8}$. 
Suppose $\diam (N(U))\leq \alpha M$ for each hyperplane $U$ of $Y$.

   Consider the wallspace structure on $Y$ obtained by combining the antipodal wall structure on $Z_1$ and $Z_2$ as in Construction~\ref{const: walspace rank 2}. Then this wallspace structure satisfies the hypotheses of Theorem~\ref{thm:proper}. Therefore, the action of $\pi_1 X^*$ on the dual $\mathcal{C}$ to this wallspace structure has torsion stabilizers.
\end{lemma}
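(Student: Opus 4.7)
The plan is to verify the three hypotheses of Theorem~\ref{thm:proper} for $X^*=\langle X\mid Y\rangle$ equipped with the wallspace of Construction~\ref{const: walspace rank 2}. Condition~\eqref{item:prop.3} is vacuous: $\Aut_X(Y)$ is trivial by Remark~\ref{rem: trivial aut}, so there are no infinite order automorphisms to cut.

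For condition~\eqref{item:prop.1} I would verify pieceful convexity of each hyperplane $H$ of $Y$ by a direct disc-diagram argument. Given $\tau\rho$ as in the definition of pieceful convexity, I would close up with a path $\mu\subset N(H)$ of length $\leq\alpha M$ to form a loop of length at most $|\tau|+2\alpha M$. Since $\sys(Y)=M$, it suffices to control $|\tau|$, which I would do by pushing $\tau$ into one of $Z_1$, $K$, or $Z_2$ using the local convexity of these subcomplexes and then invoking pieceful convexity within the rank-$1$ $Z_i$ provided by Theorem~\ref{thm:FuterWise} applied to $\langle X\mid Z_i\rangle$. The hypothesis $\alpha+\beta<\tfrac{1}{8}$ supplies the slack needed when $\tau$ crosses the gluing region $K$.

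For condition~\eqref{item:prop.2} I would split by the type of wall $W_1$ in Construction~\ref{const: walspace rank 2}. If $W_1$ is of type~(1) or (2), both hyperplanes $U_1,U_1'$ lie antipodally in some $Z_i$. The convexity of $\widetilde Z_i\hookrightarrow\widetilde Y$ forces the sub-arc of the lift $\tilde\kappa$ joining the two crossings to lie in a translate of $\widetilde Z_i$, where it is a geodesic. Applying the rank-$1$ arguments behind Theorem~\ref{thm:FuterWise} to this sub-arc furnishes a wall $W_2'$ of $Z_i$ separating the two crossings and not $2$-proximate there; $W_2'$ then extends (possibly merged with a $K$-hyperplane into a type~(3) wall) to a wall $W_2$ of $Y$ separating $p$ and $q$. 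If instead $W_1$ is of type~(3), then $\kappa$ must cross $K$, and I would take $W_2$ by applying the same rank-$1$ argument to whichever of the $Z_1$- or $Z_2$-subarcs of $\kappa$ is longer, noting that $\diam(K)\leq\beta M$ forces at least one such subarc to have length $\geq\tfrac{1}{2}\sys(Z_i)-O(\beta M)$.

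The main obstacle is transferring the non-$2$-proximity of $W_2$ from the rank-$1$ setting in $Z_i$ to the ambient presentation $\langle X\mid Y\rangle$, and with respect to the actual endpoints $p,q$ of $\kappa$ rather than the interior crossings themselves. The pieces of $\langle X\mid Y\rangle$ come in two additional flavors: wall-pieces of $Y\to X$ of diameter at most $\alpha M$ by Lemma~\ref{lem:bounded wall pieces}, and overlap regions inside $K$ of diameter at most $\beta M$. A 2-proximate path thus has total length at most $2(\alpha+\beta)M<M/4$, whereas in each case $W_2$ can be chosen within the relevant sub-arc at $Z_i$-distance at least $M/4$ from the nearest of $p$ and $q$. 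It is precisely the strict inequality $\alpha+\beta<\tfrac{1}{8}$ that closes this gap, after which Theorem~\ref{thm:proper} yields the action of $\pi_1X^*$ on $\mathcal{C}$ with torsion stabilizers.
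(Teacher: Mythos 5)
Your proposal follows the same overall strategy as the paper: verify the three numbered hypotheses of Theorem~\ref{thm:proper} for the wallspace of Construction~\ref{const: walspace rank 2}, reducing the rank-$2$ picture to rank-$1$ arguments on the subcomplexes $Z_1,Z_2$ and using the constants $\alpha,\beta$ to close gaps. Condition~\eqref{item:prop.3} is dispatched by triviality of $\Aut_X(Y)$ exactly as in the paper. For condition~\eqref{item:prop.1}, the paper bounds $|\xi\rho\tau|<\tfrac12 M+4\alpha M<M$ directly, citing the length bound $|\xi|<\tfrac12 M+2\alpha M$ from Theorem~\ref{thm:FuterWise}, and concludes nullhomotopy; your push-into-$Z_i$-or-$K$ idea is morally the same and faces the identical subtlety about a geodesic straddling $K$, so neither is more careful here. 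For condition~\eqref{item:prop.2} the paper's case split is by whether $U_1,U_1'$ lie in the same $Z_i$ (rather than by wall type as you propose), which is cleaner since a type-(3) wall can have two of its hyperplanes inside one $Z_i$ (e.g.\ one in $K\subset Z_i$ and one in $Z_i\setminus K$), a subcase your dichotomy would misroute into the ``$\kappa$ crosses $K$'' argument. In the genuinely mixed case the paper's explicit estimate (intersection point $o\in K$; $p,q$ are at distance $\geq\tfrac38 M$ from $o$; wall through an edge halfway along $\kappa$ is $\geq\tfrac{3}{16}M$ from both) is the concrete version of your sketch.

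One concrete omission: Theorem~\ref{thm:proper} requires $X^*$ to satisfy the $B(6)$ condition, not just the three numbered properties. The paper's proof opens by recording that $B(6)$ was already verified (in Part~\eqref{conclusions:B6} of Theorem~\ref{thm:main with all properties}), and this step does real work — it is where the wallspace structure itself (walls embedded, carriers disjoint, hyperplane and wall convexity) is checked using the bounds on $\diam N(U)$ and $\diam K$ relative to $\sys(Y)$. Your proposal does not address this, so as written it does not establish that Theorem~\ref{thm:proper} is applicable. Once that gap is filled, the remainder of your plan is sound and essentially identical to the paper's.
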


\begin{proof}
 The  $B(6)$ condition is verified in Part~\ref{conclusions:B6} of  Theorem~\ref{thm:main with all properties}. We now verify the rest of the hypotheses in Theorem~\ref{thm:proper}.
 
\
\textbf{Condition~\ref{item:prop.1} (Pieceful-convexity):} 
Let $\xi\rho \rightarrow Y$ be a path with endpoints on $N(U)$, and where $\xi$ is a geodesic and $\rho$ is either trivial or lies in a piece of
$Y$ containing an edge dual to $U$. The $C'(\alpha) $ condition implies that the diameter of pieces appearing in essential paths in $Y$ is  $< \alpha M$. Let $\tau \rightarrow N(U)$  be a geodesic path joining the endpoints of $\xi\rho$ in $N(U)$. Since $\diam (N(U))\leq \alpha M$, and $|\xi|< \frac{1}{2}M + 2\alpha M$ by Theorem~\ref{thm:FuterWise}, then $|\xi\rho\tau|< \frac 1 2 M+4\alpha M < M$ provided that we choose $\alpha$ so that $4\alpha <\frac 1 2$. Since the concatenation $\xi\rho\tau$ is shorter than the systole of $Y$, then $\xi\rho\tau$ must be nullhomotopic, so $\xi\rho$ is homotopic into the carrier of $U$, as claimed. 

\
\textbf{Condition~\ref{item:prop.2} (Existence of non-proximate separating wall):}
 Let $\kappa$ be a geodesic path from $p$ to $q$ as in  Theorem~\ref{thm:proper}.\eqref{item:prop.2} and  consider distinct hyperplanes $U_1, U_1'$ of $W_1$ where $\kappa$ traverses an edge $e$ dual to $U_1$ and either $\kappa$ traverses an edge dual to $U_1'$, or $q$ is 1-proximate to an edge dual to $U_1'$. If $U_1$ and $U_1'$ are hyperplanes of the same $Z_i$, then the result follows from Theorem~\ref{thm:FuterWise}.

 Otherwise, the wall $W_1$ consists of  hyperplanes $U_1$, $U_1'$ and $U_1''$ where $U_1'' \subset Z_1\cap Z_2$. Without loss of generality assume that $U_1\subseteq Z_1$ and $U_1'\subseteq Z_2$. We claim that  $\kappa \cap Z_1\cap Z_2 \neq \emptyset$.
 This is clear if $\kappa$ traverses an edge dual to $U_1'$. If $q$ is 1-proximate to an edge dual to $U_1'$, then $q$ is at distance at least $\frac{1}{2}M -\alpha M$ from $U_1''$. Thus $q$ is at distance at least $\frac{1}{2}M - \alpha M - \beta M$ from $Z_1\cap Z_2$, which is greater than $\frac {3}{8} M$, since $\alpha+\beta<\frac{1}{8}$. This proves that $\kappa$ must intersect $Z_1\cap Z_2$. 
 
 Let $o\in \kappa \cap Z_1\cap Z_2$.
 Note that $p$ is at distance at least $\frac 38 M$ from $o$ by a similar argument to the above. Let $e_2$ be an edge on $\kappa$ halfway between $o$ and $p$ and let $U_2$ be the hyperplane dual to $e_2$. Then the distance from $U_2$ to each of $p,q$ is at least $\frac 3{16} M$, in particular $U_2$ is not $2$-proximate to $p$ or $q$. The same holds for the hyperplane $U_2'$ dual to the edge antipodal to  $e_2$ in $Z_1$. Thus the wall $W_2$ containing $U_2$ satisfies the condition.

\textbf{Condition~\ref{item:prop.3} (Cutting infinite-order automorphisms):} This holds  since  $Y$ is compact so $\Aut_X(Y)$ is finite (and in our case, trivial by Remark~\ref{rem: trivial aut}).
\end{proof}

We can now prove  freeness of the action of $\pi_1X^*$ on $\mathcal{C}$.

\begin{thm}\label{thm:freeness}
Let $X$ be a nonpositively curved cube complex that admits a local isometry $Y\rightarrow X$ of a superconvex rank~$2$ pseudograph, where $X^*=\langle X \mid Y \rangle $ satisfies $C'(\alpha)$ for some $\alpha \leq \frac{1}{16}$ and satisfies  $B(6)$. Then there exist superconvex rank~$2$ pseudographs $Y'\to Y$ such that $\pi_1 X^{**}$ acts freely on the dual $\mathcal{C}$ of the wallspace structure on $\widetilde X^{**}$ where $X^{**} = \langle X \mid Y'\rangle$.
\end{thm}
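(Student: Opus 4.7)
The plan is to build $Y'\to Y$ via Construction~\ref{const: walspace rank 2} with carefully tuned parameters, and then to verify two independent facts: (a) the action of $\pi_1 X^{**}$ on $\mathcal{C}$ has torsion point stabilizers, and (b) $\pi_1 X^{**}$ is torsion-free. These together yield freeness.

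First, I would apply Proposition~\ref{prop: no conj} to pass to a superconvex rank~$2$ subpseudograph $W\to Y$ whose non-diagonal components of $W\otimes_X W$ are contractible (equivalently, $\pi_1 W$ is malnormal in $\pi_1 X$). Inside $W$, following Construction~\ref{const: walspace rank 2}, I would take rank~$1$ subpseudographs $Z_1,Z_2$ corresponding to small-cancellation words of systole at least $R$, glued along a contractible locally convex subcomplex $K$, to obtain $Y'=Z_1\cup K\cup Z_2$. By Corollary~\ref{cor: small cancellation} (applied to $W$) and Remark~\ref{rem:diam(K)}, the constants $\diam(K)/\min_i\sys(Z_i)$ and $\diam(N(U))/\min_i\sys(Z_i)$ (for every hyperplane $U$ of $Y'$) can be made as small as we please by choosing $R$ large and the overlap between the defining words small. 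The already-proven portions of Theorem~\ref{thm:main with all properties} guarantee that $X^{**}=\langle X\mid Y'\rangle$ is $C'(\alpha)$ for any prescribed $\alpha\leq \tfrac{1}{16}$ and satisfies the $B(6)$ condition. Finally, Remark~\ref{rem: trivial aut} ensures $\Aut_X(Y')=1$.

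Second, with the ratios above tuned to meet the hypotheses $\alpha+\beta<\tfrac{1}{8}$ and $\diam(N(U))\leq \alpha M$ of Lemma~\ref{lem: wallspace structure freeness}, that lemma yields that $\pi_1 X^{**}$ acts on $\mathcal{C}$ with torsion stabilizers.

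Third, I would show $\pi_1 X^{**}$ is torsion-free. A finite subgroup $F\leq \pi_1 X^{**}$ acts on the simply connected $\widetilde X^{**}$, which under the $B(6)$ hypothesis is contractible by the standard asphericity results for cubical small-cancellation (see~\cite{WiseBook}). A finite group acting on such a space fixes a point; since $\pi_1 X$ acts freely on the cubical part $\hat X\subseteq \widetilde X^{**}$, any such fixed point must lie in the interior of a cone over some translate $g\widetilde{Y'}$. The stabilizer of such a cone vertex is the conjugate $g\,\Aut_X(Y')\,g^{-1}$, which is trivial. Hence $F=1$.

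Combining the last two steps, the torsion point stabilizers provided by Lemma~\ref{lem: wallspace structure freeness} are in fact trivial, so $\pi_1 X^{**}$ acts freely on $\mathcal{C}$. The main obstacle is the transition from ``torsion stabilizers'' to ``trivial stabilizers'': this relies on (i) the asymmetry of Construction~\ref{const: walspace rank 2} to kill $\Aut_X(Y')$, and (ii) a fixed-point theorem for finite groups acting on the $B(6)$ universal cover, ensuring no new torsion is introduced beyond conjugates of cone stabilizers. The delicate bookkeeping is to verify that all the quantitative hypotheses of Lemma~\ref{lem: wallspace structure freeness} can be met simultaneously with the $C'(\alpha)$ and $B(6)$ requirements already used in the earlier parts of Theorem~\ref{thm:main with all properties}; this is a routine check once $\sys(Z_i)$ is taken sufficiently large relative to $\diam(K)$ and $\diam(N(U))$.
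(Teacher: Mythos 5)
Your first two steps (building $Y'$ from Construction~\ref{const: walspace rank 2} with tuned constants, then invoking Lemma~\ref{lem: wallspace structure freeness} to get torsion stabilizers) match the paper's proof closely. The divergence is in your torsion-freeness argument, and there you have a genuine gap.

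You assert that ``a finite group acting on [a contractible finite-dimensional CW complex] fixes a point.'' This is false in general: by Oliver's theorem (and earlier the Floyd--Richardson construction), certain finite groups, e.g.\ containing $A_5$, admit fixed-point-free actions on contractible finite complexes. The correct statement available to you is Smith theory, which applies only to $p$-groups: a cyclic group of prime order $p$ acting cellularly on a finite-dimensional mod-$p$ acyclic space has mod-$p$ acyclic (hence non-empty) fixed set. This is sufficient for your purposes, since torsion-freeness of $\pi_1 X^{**}$ only requires ruling out elements of prime order, but you must make this reduction explicit rather than asserting a global fixed-point result. You also need a citation that $\widetilde{X}^{**}$ really is contractible (or at least $\mathbb{Z}/p$-acyclic) under the hypotheses at hand; the paper's reference for this kind of statement is~\cite{Arenas24asph}, and contractibility of $\widetilde{X}^{**}$ is nontrivial. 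A further minor slip: you write ``$\pi_1 X$ acts freely on the cubical part $\hat X$''; the relevant group is $\pi_1 X^{**}$, which acts freely on $\hat X$ as the deck group of the covering $\hat X\to X$.

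The paper instead concludes torsion-freeness by citing \cite[Thm~4.2 and Rmk~4.3]{WiseBook} (under $C'(\tfrac{1}{20})$, every torsion element is conjugate into $\Aut_X(Y')=1$) or, alternatively, the homology formula of \cite[Cor~1.6]{Arenas24asph} under the weaker $C(9)$ hypothesis. Both of these are direct references to existing results in cubical small-cancellation theory and avoid the Smith-theory detour and the verification of contractibility. Your approach is salvageable once restated for prime-order cyclic subgroups, and it is conceptually illuminating, but it requires more machinery and more care than the paper's one-line citations.
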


\begin{proof}
By Construction~\ref{const: walspace rank 2} there exist $Y'\to Y$ which is the union of two rank~$1$ pseudographs $Z_1$ and $Z_2$, and a contractible subcomplex $K\subset Y'$, which all are locally convex in $Y'$, and such that $Z_1\cap Z_2 \subset K$. To simplify the proof, we assume that $Z_1\cap Z_2 = K$ (which can be done by replacing $Z_i$ with $Z_i\cup K$). 

Since $Y$ is compact, there exists a constant $D>0$ such that for each hyperplane $U$ in $Y$, $\diam N(U)<D$. The same follows for $Y'$. 
We note that in  Construction~\ref{const: walspace rank 2} the systoles $\sys(Z_i)$ can be chosen to arbitrarily large, so we can assume that $\sys(Z_i)> 16\max\{D, \diam(Z_1\cap Z_2)\}$. This ensures that the assumptions of Lemma~\ref{lem: wallspace structure freeness} are satisfied with $\alpha = \beta \leq\frac{1}{16}$, so $\pi_1X^{**}$ acts on the dual cube complex $\mathcal{C}$ with torsion stabilizers. 

We now explain that  $\pi_1X^{**}$ is torsion-free and therefore the cell-stabilizers are trivial. This can be deduced in two ways. It follows from~\cite[Thm~4.2 and Rmk~4.3]{WiseBook} that if $X^{**}$ is $C'(\frac{1}{20})$, then every torsion element in $\pi_1X^{**}$ has to be conjugated into $\Aut_X(Y')$, which is trivial by Remark~\ref{rem: trivial aut}, so the cell stabilizers in the dual are trivial. Alternatively,  using the $C(9)$ condition, and   $\Aut_X(Y')=\{1\}$, it follows from the homology formula in~\cite[Cor 1.6]{Arenas24asph} that $\pi_1X^{**}$ is torsion-free.
\end{proof}


\section{Cocompactness}\label{sec:cocompact}

The goal of this section is the following which is proven in Section~\ref{subsec:proof of cocompactness}:

\begin{thm}\label{thm:cocompactness}
Let $X^*=\langle X \mid Y \rangle $ be a compact cubical presentation that satisfies the $C'(\frac{1}{12})$ and 
 $B(6)$ conditions, $11$-wall convexity, and where  $Y$ is a superconvex rank~$2$ pseudograph.
Then  $\pi_1 X^*$ acts 
cocompactly on  the dual $\mathcal{C}$ of the wallspace structure on $\widetilde X^*$.
\end{thm}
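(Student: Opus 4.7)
The plan is to prove cocompactness by exhibiting only finitely many $\pi_1X^*$-orbits of cubes of $\mathcal C$. Since $X$ and $Y$ are both compact, $X^*$ is compact, so $\pi_1X^*$ acts cocompactly on $\widetilde X^*$; in particular, there are finitely many orbits of $0$-cubes. A cube of $\mathcal C$ corresponds to a finite collection $\{W_1, \dots, W_n\}$ of pairwise crossing walls in $\widetilde X^*$, so it suffices to (i)~bound $n$ uniformly in terms of $X^*$, and (ii)~attach to each such cube a representative $0$-cube $v_c \in \widetilde X^*$ at distance at most some universal constant $D$ from each of the $W_i$.

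For (i), I would iterate the two-wall intersection analysis: each pairwise crossing is witnessed by a square (if both walls lie in the cubical part) or by intersecting hyperplane carriers inside a single cone over $Y$. Since $X$ is finite-dimensional and $Y$ is compact, there is a universal bound on the number of pairwise crossing walls meeting at a single square or inside a single cone, and combining this with the thinness of wall-triangles from Section~\ref{sec: wall traingles} bounds $n$. For (ii), which is the heart of the argument, I would use the uniform thinness of wall-triangles to build a common "meeting region": for each triple $W_i,W_j,W_k$, Section~\ref{sec: wall traingles} produces a disc diagram whose boundary travels along the carriers and cones of these walls and whose diameter is bounded by a universal constant. Amalgamating such triangle diagrams across all triples in $c$, and using $11$-wall convexity to absorb repeated appearances of the same wall, I would obtain a region of $\widetilde X^*$ of uniformly bounded diameter $D$ that meets every $W_i$. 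Any vertex $v_c$ of this region then serves as the desired representative.

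Granting (i) and (ii), cocompactness follows quickly. Every cube $c$ has a representative $v_c$ lying in one of finitely many $\pi_1X^*$-orbits of $0$-cubes of $\widetilde X^*$. For any fixed $0$-cube $v$, the wallspace on $\widetilde X^*$ is locally finite near $v$ (each ball in $\widetilde X^*$ meets finitely many hyperplanes of $\widehat X$ and finitely many cones of $Y$, each of which has finitely many walls since $Y$ is compact), so only finitely many walls lie within distance $D$ of $v$. Hence only finitely many collections of pairwise crossing walls are anchored at $v$, and the total number of orbits of cubes is finite.

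The main obstacle I expect is step (ii): controlling the diameter of the amalgamated multi-wall diagram uniformly in the number of walls. Pairwise or triple-wise thinness does not automatically yield a common meeting region; the amalgamation argument must exploit that cone-cells and hyperplane-carriers cannot appear too many times without violating small-cancellation or wall convexity. This is precisely where the strengthened $11$-wall convexity (rather than the default $7$-wall convexity in the $B(6)$ condition) appears to play a role, since the combined diagram naturally involves paths concatenated from more than seven pieces as one amalgamates wall-triangles across all triples of pairwise crossing walls in $c$.
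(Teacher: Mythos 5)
Your step~(ii) has a genuine gap, and you correctly flag where it is: there is no mechanism forcing the wall-triangles for different triples to lie near one another. Fix $W_1$ as a reference wall $Z$. Each $W_j$ ($j\ge 2$) crosses $Z$, and by Corollary~\ref{cor: existence of r}.\eqref{case: Z case} each pair $W_i,W_j$ crosses inside $\neb_r(Z)$. But $Z$ is in general unbounded, so ``inside $\neb_r(Z)$'' does not localize anything: the hyperplane of $Z$ at which $W_i$ crosses $Z$ can be arbitrarily far along $Z$ from the hyperplane at which $W_j$ crosses $Z$, and the $W_i\cap W_j$ crossing can again be somewhere else along $Z$. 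Wall-triangles bound the \emph{area} of a diagram witnessing a single triple of crossings, but they do not pin down the \emph{location} of that triple relative to the others, and amalgamating them gives no control on the total diameter. Consequently your step~(i), which needs a uniform bound on the number $n$ of pairwise-crossing walls, has the same hole: the local-finiteness count only bounds how many walls meet a single cone or square, not how many pairwise-crossing walls can string out along $Z$.

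The paper works around exactly this by replacing ``a bounded meeting region in $\widetilde X^*$'' with ``a cube in a cocompact dual.'' It decomposes the fixed wall $Z$ into its hyperplanes $\{U_i\}$ (Lemma~\ref{lem: wall stabilizer relatively hyperbolic}: $\Stab(Z)$ acts on a bipartite tree of $Z$-hyperplanes and $Z$-cones, hence is hyperbolic relative to the $\Stab(U_i)$). For each hyperplane $U$ it proves directly that $\C(\Hem(U))$ is $\Stab(U)$-cocompact: Corollary~\ref{cor: existence of r}.\eqref{case: U case} shows two walls crossing $U$ and each other have their crossing-hyperplanes within distance $r'$ in $N(U)$, so the corresponding $r'$-thickened hyperplane carriers in $N(U)$ pairwise intersect, and the Helly property (Lemma~\ref{lem:helly}) plus the uniform bound on the number of such thickened carriers through a $0$-cube yields cocompactness of $\C(\Hem(U))$. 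It then applies the Hruska--Wise Relative Cocompactness criterion (Proposition~\ref{prop:RelHyp relative Cocompactness}), with $G=\Stab(Z)$, $(X,\mathcal W)=(\neb_r(Z),\Hem(Z))$, peripherals $\Stab(U_i)$, and the local relative quasiconvexity of Lemma~\ref{lem:LRHQC}, to conclude $\C(\Hem(Z))$ is $\Stab(Z)$-cocompact. Only then does Helly get applied --- but inside the CAT(0) cube complex $\C(\Hem(Z))$, where hyperplane carriers are convex, via Lemma~\ref{lem:hyperplane carrier} and Lemma~\ref{lem:uniform degree}. This is where the pairwise-crossing collection acquires a common point and hence boundedly many orbits. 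The wall-triangle machinery from Section~\ref{sec: wall traingles} enters precisely (and only) through Corollary~\ref{cor: existence of r}; it does not substitute for the relative hyperbolicity step, which is what tames the unbounded spread along $Z$.
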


The proof of Theorem~\ref{thm:cocompactness} involves thin triangles (Section~\ref{sec: wall traingles}), induced subwallspaces and cubical presentations (Section~\ref{subsec:induced}), hemiwallspaces (Section~\ref{subsec: hemi}),  and relative hyperbolicity (Sections~\ref{sec: _relative hyperbolicty} and \ref{sec: relative hyperbolicty}).

\subsection{Wall-triangles}\label{sec: wall traingles}

This section introduces wall-triangles, which are disc diagrams determined by  triples of pairwise intersecting walls. We prove a technical result, Proposition~\ref{prop: uniformly thin wall triangles}, which is a variant of the Ladder Theorem for  $B(6)$ cubical presentations. 
Corollary~\ref{cor: existence of r}, which follows from Proposition~\ref{prop: uniformly thin wall triangles}, will be used to prove Theorem~\ref{thm:cocompactness}. 

Let $W_1, W_2, W_3$ be walls in $\widetilde X^*$.
A \emph{wall-triangle collared by $W_1, W_2, W_3$} is a 3-collared disc diagram $D$ (see Definition~\ref{defn: collared}) where the collaring ladders $L_1, L_2, L_3$ carry dual curves of $W_1, W_2, W_3$. 
We refer to $L_i$ as the \emph{$W_i$-collar} of $D$.
The ladders $L_1, L_2, L_3$ intersect at \emph{corner-cells} $C_{12}$, $C_{23}$, $C_{31}$, which are cone-cells or squares.
See Figure~\ref{fig:wall-triangles}.
The wall-triangle $D$ is \emph{minimal} if $D$ has minimal complexity among all wall-triangles collared by $W_1, W_2, W_3$.

Let $U_1$ be a hyperplane of $W_1$. Assume $L_1\to N(U_1)$ is a square ladder. We say $D$ is \emph{$U_1$-minimal}, if $D$ has a minimal complexity among all wall-triangles collared by $W_1, W_2, W_3$ where the $W_1$-collar is a square ladder mapping to $N(U_1)$. We do not claim that a $U_1$-minimal wall-triangle is necessarily minimal in the sense above, but the statements that we prove below also apply to $U_1$-minimal wall triangles, and will be used in the proof of Theorem~\ref{thm:cocompactness}.

\begin{figure}
\centering
\includegraphics[height=1in]{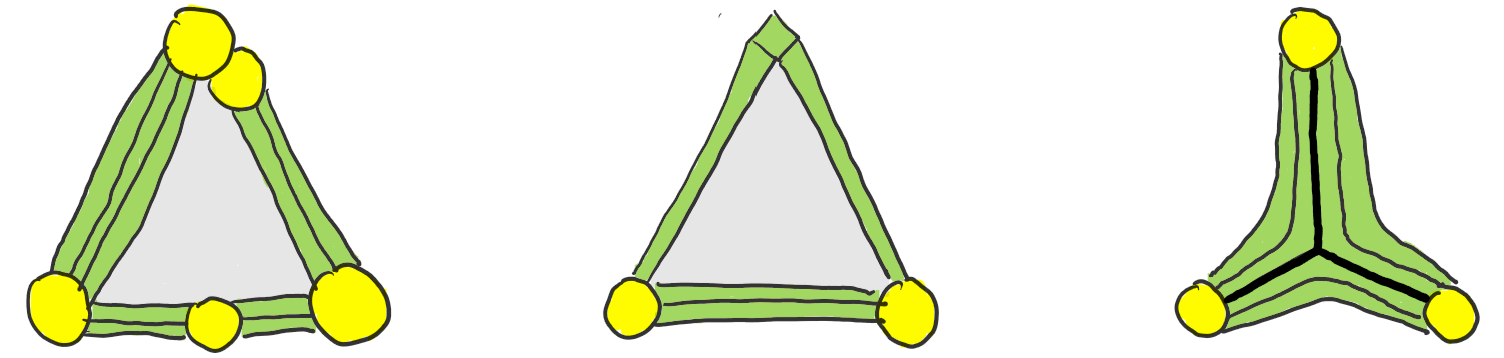}
\caption{Three wall-triangles.
\label{fig:wall-triangles}}
\end{figure}

\begin{lem}\label{lem: collared diagrams for crossing walls}
If three walls pairwise cross in $\widetilde X^*$, then there is a wall-triangle $D \rightarrow \widetilde X^*$ collared by these walls.

\end{lem}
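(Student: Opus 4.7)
The plan is to construct $D$ by selecting three ``corner cells'' where pairs of walls cross, joining them by wall-ladders, and filling in the resulting hole using simple connectivity of $\widetilde X^*$. The result, after reduction, will be the desired $3$-collared disc diagram.

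First, for each pair $\{i,j\}\subseteq\{1,2,3\}$, since $W_i$ and $W_j$ cross in $\widetilde X^*$, choose a \emph{corner cell} $C_{ij}$ which is a square or cone-cell containing hyperplanes of both $W_i$ and $W_j$. For each $i\in\{1,2,3\}$, set $\{j,k\}=\{1,2,3\}\setminus\{i\}$, and construct a $W_i$-ladder $K_i\to\widetilde X^*$ starting at $C_{ij}$ and ending at $C_{ik}$. This is possible because any two cells of $\widetilde X^*$ containing hyperplanes of the same wall $W_i$ are joined by a sequence of $2$-cells sharing $1$-cubes dual to $W_i$, which is precisely the structure of a $W_i$-ladder. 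I would take each $K_i$ to be shortest among all such ladders.

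Next, amalgamate $K_1,K_2,K_3$ by identifying the common corner cell at the endpoints of consecutive ladders: $K_1$ and $K_2$ share $C_{12}$, $K_2$ and $K_3$ share $C_{23}$, and $K_3$ and $K_1$ share $C_{31}$. Provided the $K_i$ meet pairwise only at these corner cells, the resulting $2$-complex $A$ is a topological annulus with outer boundary path $\gamma_{\text{out}}$ and inner boundary path $\gamma_{\text{in}}$ in $\widetilde X^*$. Since $\widetilde X^*$ is simply connected, $\gamma_{\text{in}}$ is null-homotopic and hence bounds a disc diagram $D_0\to\widetilde X^*$. Gluing $D_0$ to $A$ along $\gamma_{\text{in}}$ produces a disc diagram $D$ with $\partial D=\gamma_{\text{out}}$. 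By construction, $D$ is $3$-collared by $K_1,K_2,K_3$, which carry dual curves of $W_1,W_2,W_3$ respectively, so $D$ is a wall-triangle collared by these walls. Finally, pass to a minimal complexity wall-triangle by reducing via the standard small-cancellation moves of Remark~\ref{rmk: reduced moves}, which preserve the collaring structure.

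The main obstacle I anticipate is ensuring that the three ladders $K_i$ really do meet pairwise only at their designated corner cells, so that the amalgamation yields a genuine annulus rather than a more complicated configuration. If the $K_i$ overlap elsewhere in $\widetilde X^*$, the immediate outcome of the amalgamation is a $2$-complex of more complicated topology, but one can remove spurious overlaps by shortening the offending ladders or by replacing the initial choices $C_{ij}$ with overlap points that lie on multiple $K_i$, effectively merging corners. In the worst case, reducing the resulting disc diagram filled against $\gamma_{\text{out}}$ via Remark~\ref{rmk: reduced moves} collapses the extraneous parts and produces a valid wall-triangle. A secondary technical point is that the inner boundary $\gamma_{\text{in}}$ must be traced as a genuine closed path in $\widetilde X^*$; this requires the corner cells to be attached only along boundary arcs of consecutive ladders, which follows from taking the $K_i$ to be embedded.
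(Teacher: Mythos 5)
Your overall plan matches the paper's: choose ladders carrying the walls, amalgamate them around corner cells, and fill the hole. But the part you flag as the ``main obstacle''---that the $K_i$ may overlap away from the designated corners, so that the union need not be a genuine annulus---is exactly the crux, and your suggested fixes do not close the gap. ``Shortening the offending ladders'' and ``replacing corner cells with overlap points'' are not precise moves: it is unclear they terminate, or that they preserve the property of being a $W_i$-ladder whose endpoints exhibit crossings with the other two walls. Your fall-back of ``fill against $\gamma_{\text{out}}$ and reduce'' presupposes that $\gamma_{\text{out}}$ is a well-defined outer boundary of an annulus, which is precisely what you are trying to establish; the reduction moves of Remark~\ref{rmk: reduced moves} apply to a disc diagram already in hand, and there is no a priori disc diagram if the amalgamation is not annular. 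You also do not address the possibility that the amalgamation $A$ is a M\"{o}bius band rather than an annulus, which can occur.

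The paper's proof avoids these difficulties by a global minimization. It notes that the union $A = L_1 \cup L_2 \cup L_3$ is homotopy equivalent to a circle (whether it is an annulus or a M\"{o}bius strip), chooses an embedded combinatorial cycle $P \to A$ generating $\pi_1 A$, and fills $P$ with a disc diagram $D_0$. It then minimizes the complexity of $D_0$ \emph{over all choices} of the ladders $L_i$, the cycle $P$, and the filling $D_0$. Minimality forces $P$ not to traverse any edge of an $L_i$ dual to the corresponding wall, since such a traversal would yield a strictly smaller configuration of the same type inside $D_0$; this at once rules out the M\"{o}bius case and guarantees that $A$ is an annular diagram with $P$ as its inner boundary, so $D = D_0 \cup A$ is the desired wall-triangle. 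This minimization over all ladder choices is the idea your argument is missing.
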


\begin{proof}
Let $W_1,W_2, W_3$ be pairwise crossing walls. 
Consider associated ladders $L_1, L_2, L_3$, carrying a dual curve in each of $W_1,W_2, W_3$, and with the crossings occurring at the first and last cells of each $L_i$. 
Consider the union $A=L_1\cup L_2\cup L_3$. By construction $A$ is homotopy equivalent to a cycle. 
 
Let $P\to A$ be a combinatorial embedding of a closed cycle $P$, that generates $\pi_1A$.
Consider a disc diagram $D_0$ with $\partial D_0 = P$. If $P$ traverses an edge dual to any of $L_1, L_2, L_3$, then there is a proper subdiagram $D_0'\subseteq D_0$ with $\partial D_0'$ contained in a union of ladders $L_1', L_2', L_3'$ carrying dual curves of $W_1,W_2, W_3$. Note that $P$ must traverse such an edge if $A$ is a Möbius strip.
Thus, if $D_0$ has minimal complexity among all the disc diagrams obtained this way for $W_1, W_2, W_3$, i.e.\ among all choices of $L_1, L_2, L_3$, $P$, and $D$, then $P$ does not traverse any such edge. In particular, $A$ is an annular diagram, and so $D =D_0\cup A$ is a wall-triangle collared by $W_1,W_2, W_3$.   
\end{proof}

\begin{prop}\label{prop: uniformly thin wall triangles}
Let $X^*=\langle X \mid \{Y_i\} \rangle $ be a 
 cubical presentation satisfying the $C'(\frac{1}{12})$ and
 $B(6)$ conditions, and $11$-wall convexity.
 \begin{enumerate}
     \item\label{part: wall-triangles} Each cone-cell in a minimal (or $U_1$-minimal) wall-triangle is a corner-cell.
     \item\label{part: bounded} 
     If $X^*$ is compact, then there exists $R>0$ such that for any walls $W_1, W_2, W_3$ in $\widetilde X^*$, the minimal (or $U_1$-minimal) wall-triangle collared by $W_1, W_2, W_3$ has at most $R$ hyperplanes.

 \end{enumerate}
\end{prop}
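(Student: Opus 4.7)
The plan is to prove Part~\eqref{part: wall-triangles} by a minimality argument that uses $11$-Wall Convexity to bypass any non-corner cone-cell, and then derive the uniform bound in Part~\eqref{part: bounded} from Part~\eqref{part: wall-triangles} together with compactness of $X^*$.

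For Part~\eqref{part: wall-triangles}, I would suppose that a minimal (or $U_1$-minimal) wall-triangle $D = L_1 \cup L_2 \cup L_3 \cup D_0$ contains a cone-cell $C$ that is not a corner-cell. The reduction moves listed in Remark~\ref{rmk: reduced moves} rule out $C$ being an interior non-essential cone-cell of $D_0$ or absorbable into an adjacent cone-cell, so $C$ must lie inside some collaring ladder, say $L_i$, and differ from the two corner cone-cells at its ends. By the definition of $3$-collared, $C$ meets both $\partial D_0$ and $\partial D$. The $W_i$-dual curve through $L_i$ enters $C$ on a $1$-cube dual to a hyperplane $H \subseteq W_i$ and exits on a $1$-cube dual to $H' \subseteq W_i$. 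Let $Q$ denote the outer arc of $\partial C$ on $\partial D$, viewed as a path in the cone $Y$ containing $C$, whose endpoints lie on $N(H)$ and $N(H')$. A standard application of Greendlinger's Lemma to a local subdiagram of $D$ near $C$, combined with minimality and the $C'(\tfrac{1}{12})$ hypothesis, forces $Q$ to be a concatenation of at most $11$ pieces. Then $11$-Wall Convexity applied to the closed path obtained by concatenating $Q$ with short arcs in $N(H)$ and $N(H')$ yields a path-homotopy of $Q$ into the carrier of a single hyperplane $u \subseteq W_i$. Substituting this homotopy for the segment of $L_i$ containing $C$ produces a new wall-triangle $D'$ collared by $W_1, W_2, W_3$ with strictly fewer cone-cells than $D$, contradicting minimality. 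The argument for $U_1$-minimality is identical: any non-corner cone-cell could only lie in $L_2$ or $L_3$ (since $L_1$ is a square ladder by hypothesis), and the above replacement preserves the condition that $L_1$ lies in $N(U_1)$.

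For Part~\eqref{part: bounded}, compactness of $X^*$ gives finitely many cones $Y_i$, all of bounded diameter, and a uniform bound on the diameters of wall-pieces and cone-pieces, as well as of hyperplane carriers inside the $Y_i$'s. By Part~\eqref{part: wall-triangles}, a minimal wall-triangle has exactly three cone-cells, the corners; each $L_i$ is then a single pseudo-grid and $D_0$ is a square disc diagram. The three corner cone-cells have uniformly bounded diameter. Each $L_i$ is contained in a uniform neighbourhood of a single hyperplane carrier $N(u_i)$ with $u_i \subseteq W_i$ (implicit in the proof of Part~\eqref{part: wall-triangles}), so the number of dual curves of $L_i$ is bounded by $\diam(N(u_i))$, which is uniformly bounded by compactness. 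The boundary $\partial D_0$ consists of inner arcs of the $L_i$'s and arcs of the three corner cone-cells, hence has uniformly bounded length; since $D_0$ is a square disc diagram, its number of dual curves is at most $\tfrac{1}{2}|\partial D_0|$. Summing over $L_1, L_2, L_3$ and $D_0$ yields the desired uniform bound $R$.

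The main obstacle is establishing the $11$-piece bound on the outerpath $Q$ in Part~\eqref{part: wall-triangles}: $11$-Wall Convexity applies only once this piece count is known, yet it is not automatic from $C$ being a boundary cone-cell of $D$. The argument would proceed by cutting off the subdiagram bounded by $Q$ and by arcs in $N(H) \cup N(H')$, and applying Greendlinger's Lemma (or its Ladder-Theorem variant) to show that a decomposition of $Q$ into more than $11$ pieces would produce an additional exposed shell or corsquare, contradicting the minimal complexity of $D$. A secondary issue is verifying that the substituted diagram is genuinely simpler as a wall-triangle --- that is, that the replacement pseudo-grid in $N(u)$ still carries a dual curve of $W_i$ and still meets the adjacent parts of $L_i$ coherently, so that complexity strictly decreases rather than merely shifts between cone-cells and squares.
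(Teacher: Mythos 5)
There are several genuine gaps in this proposal, affecting both parts.

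\textbf{Interior cone-cells are not eliminated.} You argue that the reduction moves rule out $C$ being an interior cone-cell of $D_0$, but the moves in Remark~\ref{rmk: reduced moves} only remove \emph{non-essential} internal cone-cells (those with boundary null-homotopic in their cone). An essential cone-cell can perfectly well sit in the interior of $D_0$, and your deduction that ``$C$ must lie inside some collaring ladder'' does not follow. The paper handles interior cone-cells by a separate argument: every dual curve from $\partial C$ must exit on $\partial D$ (none returns to $\partial C$ by $C'(\tfrac1{12})$), and a pigeon-hole on the sides of $\partial C$ produces two dual curves ending on the same $L_i$ separated by at least $5$ pieces, giving a subdiagram with only two exposed cells; the Ladder Theorem then forces $C$ into $L_i$, a contradiction.

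\textbf{Part~\eqref{part: wall-triangles}: wrong path, unjustified piece bound.} You apply $11$-Wall Convexity to the \emph{outerpath} $Q$ after claiming Greendlinger's Lemma forces $|Q| \le 11$ pieces. Greendlinger controls \emph{innerpaths} of shells; it says nothing about outerpaths, and for a cone-cell sitting on the collar, $Q$ (on $\partial D$) can have arbitrarily many pieces. The paper does the opposite: minimality plus $11$-Wall Convexity forces the \emph{innerpath} $S$ to have \emph{at least} $11$ pieces (otherwise $C$ could be replaced by a square ladder in the carrier of a single hyperplane of $W_i$), and then a pigeon-hole on the 2nd, 6th and 10th pieces of $S$ produces two dual curves ending on the same adjacent ladder $L_j$, separated by $\ge 5$ pieces; the resulting two-shell subdiagram is a ladder by the Ladder Theorem, so $C$ lies in both $L_i$ and $L_j$, contradicting minimality. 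Your Greendlinger-forces-a-short-outerpath step is the crux and it does not hold; you correctly flag this as ``the main obstacle'' but the intended route is through the innerpath, not the outerpath.

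\textbf{Part~\eqref{part: bounded}: pseudo-grids are not uniformly short.} You claim the number of dual curves of each $L_i$ is bounded by $\diam(N(u_i))$, with $u_i$ a hyperplane of $W_i$, and that this is uniformly bounded by compactness. But $u_i$ may be a hyperplane of $\widetilde X$ (not of a compact cone), whose carrier in $\widetilde X^*$ has unbounded diameter; the pseudo-grid $G_i$ between two corner cone-cells can be arbitrarily long. Consequently your bound on $|\partial D_0|$ also fails. The paper's argument here is essential: after Part~\eqref{part: wall-triangles}, hyperplanes are partitioned by which of $C_{(i-1)i}, G_i$ they cross; corner-incident ones are bounded by the piece length; and hyperplanes crossing two adjacent pseudo-grids $G_{i-1}, G_i$ are pairwise non-crossing by minimality and bounded by the number of immersed hyperplanes of compact $X$ --- if more than that many occurred, two would be $\pi_1 X^*$-translates $U, gU$, and cutting $D$ along $U$ and $gU$, discarding the annular region between, and regluing along $g$ would yield a lower-complexity wall-triangle. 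This cut-and-glue step has no counterpart in your proposal and is precisely what controls the unbounded direction.
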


\begin{proof}[Proof of (\ref{part: wall-triangles})]
Let $D$ be a minimal wall-triangle collared by $W_1,W_2, W_3$. Let $L_1, L_2, L_3$ be its collaring ladders, and let $C_{12}, C_{23}, C_{31}$ be its corner-cells.  

In the  degenerate cases where $ D$ is a cone-cell or a ladder, the statement is clear.  
We focus on the non-degenerate case where the corner-cells are distinct.

Suppose that $D$ contains an interior cone-cell $C$, i.e.\ $C$ is disjoint from $\partial D$. Consider the cyclic order on the dual curves starting at $\partial C$ and travelling away from $C$. By $C'(\frac{1}{12})$ and Lemma~\ref{lem:Greendlinger} no dual curve returns to $\partial C$, and therefore must end on $\partial D$. Any two dual curves that intersect in $D$ must start in a single piece in $\partial C$. Thus by $C'(\frac{1}{12})$ and the pigeon-hole principle, there exists a pair of dual curves which end at the same side ladder $L_i$ such that any subpath of $\partial C$ containing both of them has at least $5$ pieces. Thus the subdiagram $D'$ bounded by a subladder of $L_i$, the carriers of these two dual curves, and $C$, is a disc diagram with only two exposed cells, and therefore by Lemma~\ref{lem:Greendlinger}, $D'$ must be a ladder. Hence $C$ lies in $L_i$, which contradicts the assumption that $C$ is an interior cone-cell. Thus there are no interior cone-cells in $D$.

Now suppose that $D$ contains a non-interior cone-cell $C$, which is different from $C_{12}, C_{23}, C_{31}$. Without loss of generality, assume that $C$ is contained in $L_2$. By $11$-wall convexity the innerpath $S$ of $\partial C$ consists of at least $11$ pieces. Again we consider all the dual curves that start at $S$ and again none of them return to $S$. Any dual curve that ends in $L_2$ must belong to the initial or terminal piece of $S$. By the pigeon-hole principle (applied to the second, sixth, and tenth pieces of $S$) there exists a pair of dual curves that both end on $L_i$  for $i=1$ or $3$, such that the subpath of $S$ containing both of them has at least $5$ pieces. Thus the subdiagram $D'$ bounded by a subladder of $L_i$, the carriers of these two dual curves, and $C$, is a disc diagram with only two exposed cells. By Lemma~\ref{lem:Greendlinger}, $D'$ is a ladder. Hence $C$ belongs to both $L_2$ and $L_i$. This contradicts the minimality of $D$, as the subdiagram of $D$ with cone-cells $C$ replacing $C_{12}$ or $C_{31}$ depending on whether $i=1$ or $3$ is a smaller diagram collared by $W_1, W_2, W_3$. 

For the $U_1$-minimal case, we must have $i=3$ in the above argument, and we get a contradiction with the $U_1$-minimality.
\end{proof}

\begin{proof}[Proof of (\ref{part: bounded})]
 Let $D$ be a minimal (or $U_1$-minimal) wall-triangle collared by $W_1, W_2, W_3$. 
 By Part~\eqref{part: wall-triangles} $D$ does not have any cone-cells that are not corner-cells. Consequently, the collaring ladders $L_i$ decompose as the union $C_{(i-1)i}\cup G_i\cup C_{i(i+1)}$ where $G_i$ is a square ladder. The hyperplanes of $D$ can be partitioned according to which corner-cells $C_{12}, C_{23}, C_{31}$ and/or square ladders $G_1, G_2, G_3$ they cross.

 The number of hyperplanes intersecting corner-cells $C_{(i-1)i}$ and $C_{i(i+1)}$ is bounded by the length of a piece, which is uniformly bounded since $X^*$ is compact. By the minimal complexity of $D$ hyperplanes intersecting $C_{(i-1)i}$ cannot cross $G_{i-1}$ or $G_i$, and the number of hyperplanes intersecting $C_{(i-1)i}$ and crossing $G_{i+1}$ is again bounded by the length of a piece. 

Finally, consider the collection $\mathcal H_{(i-1)i}$ of hyperplanes crossing both $G_{i-1}$ and $G_{i}$. By the minimality of $D$ no two such hyperplanes cross in $D$. Since $X$ is compact, there is a bound $M$ on the number of immersed hyperplanes in $X$. If $|\mathcal H_{(i-1)i}|>M$, then by the pigeon-hole principle $\mathcal H_{(i-1)i}$ contains at least two hyperplanes $U,gU$ that are $\pi_1 X^*$ translates of one another, i.e.\ $g\in \pi_1 X^*$. This yields a contradiction since cutting $D$ along $U$ and $gU$, removing the part between, and gluing back together along $g$, gives a lower complexity disc diagram collared by $W_1, W_2, W_3$.

In the $U_1$-minimal case, we observe that the cutting and gluing procedure preserves $U_1$-minimality, so the above argument is  valid.
\end{proof}

\begin{figure}
\centering
\includegraphics[height=1in]{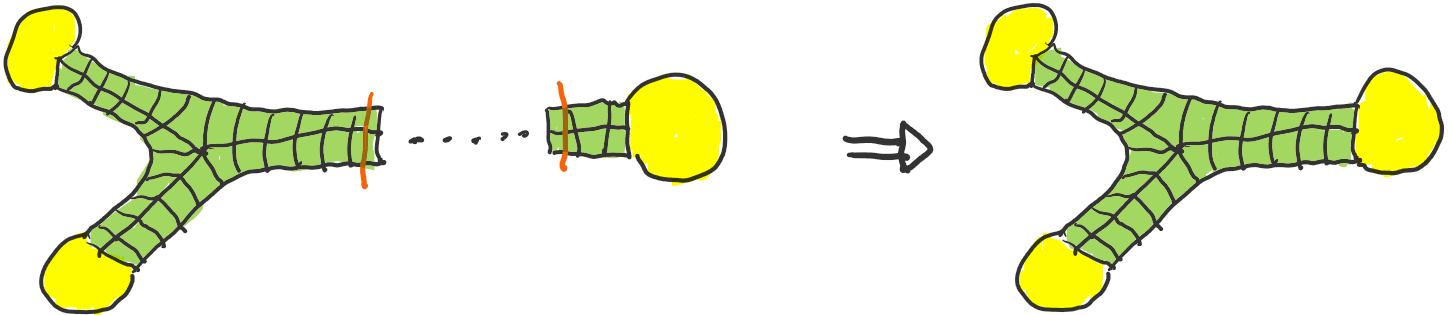}
\caption{Repeated hyperplane allows us to produce smaller $D$.}
\label{fig: walltrianglesproof2}
\end{figure}

We will use the following consequence of Proposition~\ref{prop: uniformly thin wall triangles}.(\ref{part: bounded}). The \emph{$r$-neighborhood} of a wall $Z$ is the union of $r$-neighborhoods of its hyperplanes.
\begin{cor}\label{cor: existence of r}
    Let $X^*=\langle X \mid \{Y_i\} \rangle $ be a compact cubical presentation that satisfies the $C'(\frac{1}{12})$ and  $B(6)$ conditions, and $11$-wall convexity.
    \begin{enumerate}
        \item\label{case: Z case} There exists $r$ such that for every wall $Z$ in $\widetilde X^*$, if any two other walls $W,W'$ intersect $Z$ and each other, then $W,W'$ intersect each other in the $r$-neighborhood $\neb_r(Z)\subseteq \widetilde X^*$ of $Z$.
        \item\label{case: U case} There exists $r'$ such that for every wall $Z$ in $\widetilde X^*$ and every hyperplane $U$ of $Z$, if any two other walls $W,W'$ intersect each other and intersect $U$ in hyperplanes $V, V'$ respectively, then $V\cap N(U),V'\cap N(U)$ are at distance $r'$ in $N(U)$.
    \end{enumerate}
    
\end{cor}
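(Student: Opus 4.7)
The plan is to derive both statements from Proposition~\ref{prop: uniformly thin wall triangles}(\ref{part: bounded}) by producing, for each hypothesized triple of walls, a wall-triangle to which that result applies. Fix once and for all the constant $R$ supplied by the proposition, and let $D_0$ denote the maximum diameter of a cone-cell in $\widetilde X^*$, which is finite since $X^*$ is compact.

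For part (\ref{case: Z case}), I would start from pairwise-intersecting walls $Z, W, W'$ and use Lemma~\ref{lem: collared diagrams for crossing walls} to produce a wall-triangle $\Delta$ collared by them, then pass to a minimal representative. Proposition~\ref{prop: uniformly thin wall triangles}(\ref{part: wall-triangles}) forces every cone-cell of $\Delta$ to be a corner-cell, so each side-collar decomposes as a square ladder flanked by two corner cells (cone-cells or squares), each of diameter at most $D_0$. Since $\Delta$ contains at most $R$ hyperplanes by (\ref{part: bounded}), the $W$-side square ladder has length at most $R$. This places the corner-cell $C_{WW'}$ (which witnesses the crossing of $W$ with $W'$) within distance $R + 2 D_0$ of the $Z$-collar, hence within distance $r := R + 3 D_0$ of $Z$ itself.

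For part (\ref{case: U case}), I would repeat the argument using a $U$-minimal wall-triangle. A dual curve of $W$ crossing $U$ meets $N(U)$ in a hyperplane $V \cap N(U)$ of the convex carrier $N(U)$, and similarly $V' \cap N(U)$; since $N(U)$ is convex, these two hyperplanes can be joined by a square ladder inside $N(U)$, which we declare to be the $Z$-collar in the construction of Lemma~\ref{lem: collared diagrams for crossing walls}. Minimizing complexity over wall-triangles collared by $Z, W, W'$ whose $Z$-collar is a square ladder in $N(U)$ produces a $U$-minimal representative, to which Proposition~\ref{prop: uniformly thin wall triangles}(\ref{part: bounded}) applies, giving a $Z$-collar of length at most $R$. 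Accounting for the two bounding corner cone-cells yields $\dist_{N(U)}(V \cap N(U), V' \cap N(U)) \leq R + 2 D_0 =: r'$.

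The main obstacle I anticipate is checking in part (\ref{case: U case}) that the $U$-minimality constraint is compatible with the proof of Lemma~\ref{lem: collared diagrams for crossing walls}: one must verify that starting the construction with a $Z$-collar prescribed to lie in $N(U)$ still allows the minimization that yields a reduced disc diagram filling the annulus. This is precisely the reason Proposition~\ref{prop: uniformly thin wall triangles} was formulated to handle both the minimal and the $U_1$-minimal variants, so once this bookkeeping is in place the quantitative conclusions follow directly from the uniform bounds $R$ and $D_0$.
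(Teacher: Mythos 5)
Your proof is correct and takes the approach the paper implicitly intends: the paper states the corollary with no proof, framing it only as a ``consequence of Proposition~\ref{prop: uniformly thin wall triangles}.(\ref{part: bounded}),'' and your argument supplies exactly that derivation. In both parts you produce a (minimal, resp.\ $U$-minimal) wall-triangle collared by $Z, W, W'$ via Lemma~\ref{lem: collared diagrams for crossing walls}, use Proposition~\ref{prop: uniformly thin wall triangles}(\ref{part: wall-triangles}) to decompose each collar as $C_{(i-1)i}\cup G_i\cup C_{i(i+1)}$ with $G_i$ a square ladder, and bound the ladder lengths by the uniform hyperplane count $R$ from Proposition~\ref{prop: uniformly thin wall triangles}(\ref{part: bounded}); the corner cone-cells contribute at most the bounded cone diameter $D_0$, giving the uniform constants $r$ and $r'$.

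The one point you flag --- the existence of a $U$-minimal wall-triangle whose $Z$-collar is a square ladder inside $N(U)$ --- is real but resolvable just as you indicate. Since $V$ and $V'$ both cross $U$, their traces $V\cap N(U)$ and $V'\cap N(U)$ are hyperplanes of the convex carrier $N(U) \cong U\times[0,1]$, so a combinatorial geodesic of $U$ between the two crossing squares, thickened to $\gamma\times[0,1]\subset N(U)$, is a square ladder carrying $U$; declaring this to be $L_Z$ and then running the annulus-filling and complexity-minimization of Lemma~\ref{lem: collared diagrams for crossing walls} (keeping $L_Z$ fixed) produces the required $U$-minimal wall-triangle. This is precisely the situation the $U_1$-minimal variant of Proposition~\ref{prop: uniformly thin wall triangles} was built for, so your argument goes through. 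The exact constants ($R+3D_0$ vs.\ $R+2D_0$, etc.) are not tight but are immaterial, since only a uniform bound is claimed.
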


\subsection{Hemiwallspaces}\label{subsec: hemi}

For a wallspace $(X, \mathcal{W})$, an associated \emph{hemiwallspace} is a subcollection $\Hem$ of halfspaces of $\mathcal{W}$ such that for each wall $W=\{W^+,W^-\}$ at least one of  $W^+,W^-$ is contained in $\Hem$. Every hemiwallspace has a dual, which admits a natural embedding as a convex subcomplex
of the dual of $(X, \mathcal{W})$.
 See \cite[Sec~3.4]{HruskaWiseAxioms} for more about hemiwallspaces.
For a subset $S\subseteq X$, the collection of all halfspaces intersecting $S$ nontrivially is a hemiwallspace, which we denote by $\Hem(S)$. The dual of $\Hem(S)$ is denoted by $\C(\Hem(S))$.

Let $Z$ be a wall in $\widetilde X^*$. 
 For a set $S\subseteq \widetilde X^*$, the collection of halfspaces of walls crossing $Z$ that also intersect $S$ is equal to $\Hem(Z)\cap \Hem(S)$, which is still a hemiwallspace.

\begin{lem}\label{lem:hyperplane carrier}
Let $W,Z\in \mathcal W$ be two crossing walls of a wallspace $(X,\mathcal W)$, and let $\C$ be the convex subcomplex of the dual $\C(\Hem(Z))$, corresponding to $\Hem(Z)\cap\Hem(W)$.
Then $\C$ is a hyperplane carrier in $\C(\Hem(Z))$
\end{lem}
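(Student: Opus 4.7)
The plan is to identify $\C$ explicitly with the carrier of the hyperplane of $\C(\Hem(Z))$ that is dual to $W$. Because convex subcomplexes of a CAT(0) cube complex are determined by which ambient halfspaces meet them, I will match the sub-hemiwallspace $\Hem(Z)\cap\Hem(W)\subseteq \Hem(Z)$ with the sub-hemiwallspace of $\Hem(Z)$ defining $N(W^*)$.

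First I would observe that $W$ crosses $Z$ by hypothesis, so both halfspaces $W^+,W^-$ meet $Z$ and hence lie in $\Hem(Z)$. Thus $W$ contributes a hyperplane $W^*$ to $\C(\Hem(Z))$. The hyperplanes of $\C(\Hem(Z))$ are indexed by the walls $V\in\mathcal W$ with both $V^\pm\in\Hem(Z)$, i.e.\ walls crossing $Z$; among these, the hyperplanes $V^*\ne W^*$ crossing $W^*$ in $\C(\Hem(Z))$ are exactly those for which $V$ crosses $W$ in $(X,\mathcal W)$.

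Next I would write down the standard description of the sub-hemiwallspace defining the carrier: $\Hem(N(W^*))\subseteq \Hem(Z)$ consists of both halfspaces of each wall whose hyperplane crosses $W^*$ (together with both halfspaces of $W$ itself), and for each wall $V\ne W$ of $\C(\Hem(Z))$ whose hyperplane is disjoint from $W^*$, the unique halfspace of $V$ containing $W$. Then I would check that this coincides with $\Hem(Z)\cap\Hem(W)$ by case analysis on a wall $V$ with $V^\pm\in\Hem(Z)$. If $V$ crosses $W$, then both $V^\pm\cap W$ are nonempty, so both halfspaces lie in $\Hem(W)$; if $V$ does not cross $W$, then $W$ lies in one closed halfspace of $V$, only that one meets $W$, and it is precisely the one appearing in the carrier description. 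The case $V=W$ is handled by noting that $W^\pm\cap W=W\ne\emptyset$ under the closed-halfspace convention, which also aligns with both halfspaces of $W^*$ meeting $N(W^*)$.

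The main obstacle will be bookkeeping at the boundary cases: correctly handling the wall $W$ itself (whether its halfspaces lie in $\Hem(W)$ depends on the open/closed convention for halfspaces, and on the convention that a wall does not cross itself), and ensuring that the convex subcomplex corresponding to the hemiwallspace $\Hem(Z)\cap\Hem(W)$ is indeed realised inside $\C(\Hem(Z))$ rather than only inside the larger dual $\C(\mathcal W)$. Once these conventions are fixed consistently with the definition of $\Hem(S)$ from \cite[Sec~3.4]{HruskaWiseAxioms}, the identification $\C=N(W^*)$ is immediate, and in particular $\C$ is a hyperplane carrier in $\C(\Hem(Z))$ as claimed.
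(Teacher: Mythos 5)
Your proposal is correct and takes essentially the same route as the paper: both identify $\C$ with the carrier of the hyperplane $W^*$ (the paper calls it $V$) dual to $W$ in $\C(\Hem(Z))$ by matching the sub-hemiwallspace $\Hem(Z)\cap\Hem(W)$ with the one defining the carrier, after setting aside the one-sided halfspaces that only affect the embedding. You spell out the case analysis on walls $V$ crossing or not crossing $W$ that the paper compresses into the phrase ``a natural bijection\dots and it preserves non-empty intersection.''
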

\begin{proof}
Ignore the halfspaces of $\Hem(Z)\cap \Hem(W)$ whose complements are not in $\Hem(Z)\cap \Hem(W)$, as these halfspaces only play a role in situating $\C\subset \C(\Hem(Z))$. 
Let $V$ be the hyperplane of $\C(\Hem(Z))$ corresponding to $W$.
There is a natural bijection between halfspaces of $N(V)\subseteq \C(\Hem(Z))$ and halfspaces of $\Hem(Z)\cap \Hem(W)$, and it preserves non-empty intersection. Thus the duals are isomorphic.
\end{proof}

Lemma~\ref{lem:hyperplane carrier} will be applied to two walls in a cubical presentation $X^*$ satisfying  the $B(6)$ condition.

\subsection{Relative hyperbolicity and local relative quasiconvexity}\label{sec: _relative hyperbolicty}

We follow the approach to  relative hyperbolicity using Bowditch's fine  hyperbolic graphs \cite{Bowditch2012}.
Recall that $G$ is \emph{hyperbolic relative to $\{G_v\}$} if $G$ acts cocompactly on a fine hyperbolic graph $\Gamma$ with finite edge stabilizers, each $G_v$ stabilizes a vertex, and each vertex stabilizer is either finite or conjugate to some $G_v$.
In this setting, the condition that $H$ is  \emph{relatively quasiconvex} is that there is an $H$-cocompact quasiconvex subgraph $\Upsilon\subset \Gamma$, in some (and in fact any) action of $G$ on such a fine hyperbolic graph $\Gamma$ \cite{MartinezPedrozaWise2011}.

\begin{lem}\label{lem:Gsplits}
    Let $G$ split as a finite graph of groups with finite edge groups, then $G$ is hyperbolic relative to its (possibly infinite) vertex groups.
\end{lem}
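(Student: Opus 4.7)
The plan is to invoke Bass--Serre theory and apply Bowditch's characterization of relative hyperbolicity via actions on fine hyperbolic graphs directly to the Bass--Serre tree of the given splitting.

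Concretely, I would first pass to the Bass--Serre tree $T$ associated with the given finite graph of groups decomposition of $G$. By Bass--Serre theory, $G$ acts on $T$ without inversions, the edge stabilizers are conjugates of the edge groups (hence finite by hypothesis), the vertex stabilizers are conjugates of the vertex groups $G_v$, and $G\backslash T$ is the finite underlying graph. In particular, the action of $G$ on $T$ is cocompact.

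Next, I would observe that $T$ is a fine hyperbolic graph. Hyperbolicity is immediate since trees are $0$-hyperbolic. Fineness is also immediate: for each edge $e$ of $T$ and each integer $n\geq 1$, the number of simple circuits of length at most $n$ containing $e$ is zero (since $T$ has no circuits at all), so the fineness condition from \cite{Bowditch2012} holds vacuously.

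Finally, I would apply the definition recalled at the start of Section~\ref{sec: _relative hyperbolicty}. The action of $G$ on $T$ is cocompact with finite edge stabilizers; each vertex of $T$ is fixed by a conjugate of some $G_v$; and every stabilizer of a vertex of $T$ is either finite or conjugate to one of the $G_v$. All the hypotheses in the definition of relative hyperbolicity are therefore met with the peripheral structure given by the vertex groups $\{G_v\}$. The only subtlety is cosmetic: some of the $G_v$ may themselves be finite, in which case they can be discarded from the peripheral structure without affecting the definition. Thus $G$ is hyperbolic relative to $\{G_v\}$, and no real obstacle arises---the content of the lemma is essentially the observation that a Bass--Serre tree for a splitting over finite groups serves as a witnessing fine hyperbolic graph.
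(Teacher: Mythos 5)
Your proposal is correct and follows essentially the same route as the paper: act on the Bass--Serre tree, observe it is a fine $0$-hyperbolic graph with finite edge stabilizers and cocompact quotient, and invoke Bowditch's characterization. Your version simply spells out a few more of the routine verifications.
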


\begin{proof}
Consider the action of $G$ on the associated Bass-Serre tree $T$.
Note that $T$ is obviously $0$-hyperbolic and fine.
The quotient is compact, and the action has finite edge stabilizers. Thus by Bowditch's criterion, $G$ is hyperbolic relative to its vertex stabilizers.
\end{proof}

The following generalizes the local quasiconvexity of free groups:
\begin{lem}[Local relative quasiconvexity]\label{lem:LRHQC}
Let 
$G$ split as a finite graph of groups with finite edge groups and vertex groups $\{G_v\}$. 
So $G$ is hyperbolic relative to $\{G_v\}$ by Lemma~\ref{lem:Gsplits}.

Let $H$ be a subgroup generated by a finite set, together with finitely many subgroups of conjugates of the $\{G_v\}$. Then $H$ is relatively quasiconvex.

In particular, $G$ is locally relatively quasiconvex.
\end{lem}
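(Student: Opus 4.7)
The plan is to invoke Lemma~\ref{lem:Gsplits} so that $G$ is hyperbolic relative to $\{G_v\}$ via its cocompact action on the Bass--Serre tree $T$, with finite edge stabilizers and with infinite vertex stabilizers being conjugates of the $G_v$. Since $T$ is a tree, it is $0$-hyperbolic and automatically fine, so by the characterization of relative quasiconvexity it suffices to exhibit an $H$-cocompact invariant subtree $\Upsilon \subseteq T$: every subtree of a tree is $0$-quasiconvex and hence quasiconvex in the relative sense.

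Write the hypothesized generating data as $H = \langle F \cup H_1 \cup \cdots \cup H_k \rangle$, where $F$ is finite and each $H_i \leq g_i G_{v_i} g_i^{-1}$, so that $H_i$ fixes the vertex $v_i^* := g_i v_i \in T$. Pick a basepoint $v_0 \in T$ (for instance $v_0 = v_1^*$ if $k \geq 1$, any vertex otherwise) and let $Y \subseteq T$ be the convex hull of the finite set $S := \{v_0\} \cup \{v_i^* : 1 \leq i \leq k\} \cup \{fv_0 : f \in F\}$; then $Y$ is a finite subtree. Define $\Upsilon := \bigcup_{h \in H} hY$. By construction, for each generator $g$ of $H$ the translate $gY$ overlaps $Y$: if $g \in H_i$ then $gv_i^* = v_i^* \in Y \cap gY$; if $g = f \in F$ then $fv_0 \in Y \cap fY$. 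An induction on word length shows that for every $h = g_1 \cdots g_n \in H$, the chain $Y, g_1Y, g_1g_2Y, \ldots, hY$ is a sequence of consecutively overlapping subtrees, so $\Upsilon$ is connected, hence a subtree; and since $Y$ is finite, $\Upsilon = H \cdot Y$ is $H$-cocompact.

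I expect the only delicate point to be this connectivity argument for $\Upsilon$. The subtlety is that $S$ must contain the fixed points $v_i^*$ (to accommodate the elliptic subgroup generators) as well as the translates $fv_0$ (to accommodate $F$), ensuring every generator carries $Y$ to an overlapping translate; if either ingredient is omitted the resulting union can fail to be connected. Once this is in place, the rest of the statement is immediate: any subgraph of a tree is $0$-quasiconvex, and the ``in particular'' conclusion is the case $k = 0$ of the main assertion.
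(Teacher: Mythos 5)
Your proof is correct and follows essentially the same approach as the paper: both build a finite subtree $J$ (you call it $Y$) whose $H$-translate $\Upsilon = HJ$ is a connected, $H$-cocompact subtree of the Bass--Serre tree, and then observe that any subtree is quasiconvex, hence a relative quasiconvexity witness. The only difference is cosmetic---the paper first reduces to the case that the non-elliptic generators are loxodromic and includes a fundamental domain on each axis in $J$, whereas you take the convex hull of $\{v_0\}\cup\{v_i^*\}\cup\{fv_0 : f\in F\}$, which sidesteps that case split and makes the overlap argument uniform across all generators.
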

In our application, $G$ splits as a 
finite bipartite graph of groups where the left vertex groups are finite and the right vertex groups are arbitrary.

\begin{proof}
Let $T$ be the Bass-Serre tree associated to the finite graph of groups for $G$. Let $H$ be a subgroup generated by a finite set of  elements $\{g_1, \dots, g_k\}$ and a finite set of elliptic subgroups $\{K_{v_1}, \dots, K_{v_\ell}\}$, i.e.\ $K_{v_j}$ is a subgroup of 
the stabilizer of a vertex $v_j$ in $T$. 

Without loss of generality, we assume each $g_i$ is loxodromic. For $1\leq i\leq k$, let $\gamma_i$ be a fundamental domain for the action of $\langle g_i\rangle$ on its axis. 

Let $J$ be a finite tree containing each $\gamma_i$ and each $v_i$.
Then $\Upsilon=HJ$ is an $H$-cocompact subtree of $T$. Finally, $\Upsilon\subset T$ is obviously quasiconvex.
\end{proof}

\subsection{Relative cocompactness}\label{sec: relative hyperbolicty}

\begin{lem}[Relatively-hyperbolic wall-stabilizers]\label{lem: wall stabilizer relatively hyperbolic}
    Let $X^*$ satisfy  $B(6)$. For a wall $Z$ in $\widetilde X^*$, 
    the group $\Stab(Z)$ acts on a bipartite tree whose vertices correspond to $Z$-cones and $Z$-hyperplanes, and whose edges correspond to incidence. 

    In particular, $\Stab(Z)$ is hyperbolic relative to $\{\Stab(U_i)\}_i$ where $U_i$ are the hyperplanes of $Z$.
\end{lem}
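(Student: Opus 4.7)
The plan is to construct the claimed bipartite graph $T$ explicitly, verify it is a tree, and then invoke Lemma~\ref{lem:Gsplits} to obtain the relative hyperbolicity statement.

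\textbf{Construction and action.} I will declare the type-$C$ vertices of $T$ to be those cones $g\widetilde{Y}_i$ in $\widetilde{X}^*$ whose intersection with $Z$ is a nontrivial wall in the wallspace on $Y_i$, and the type-$U$ vertices to be the hyperplanes $U_i$ of $Z$. I place an edge between a type-$C$ vertex $C$ and a type-$U$ vertex $U$ whenever $U$ is one of the hyperplanes making up the wall $Z\cap C$ of $C$ (Definition~\ref{def:b6}\eqref{item:b6.2}). The action of $\Stab(Z)$ on cones and hyperplanes of $Z$ descends to an action on $T$.

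\textbf{Connectedness.} The wall $Z$ is a connected subspace of $\widetilde{X}^*$ built by gluing hyperplane carriers across cone-pieces. Any path in $Z$ between two of its hyperplanes can, by the Wallspace Cones condition, be replaced by an alternating concatenation of cone arcs and carrier arcs; this produces a walk in $T$ between the corresponding type-$U$ vertices, so $T$ is connected.

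\textbf{Tree property.} This is the main obstacle. I argue by contradiction: a simple cycle in $T$ yields an alternating sequence of distinct cones $C_0,\dots,C_{n-1}$ and distinct hyperplanes $U_0,\dots,U_{n-1}$ with $U_i\subset C_i\cap C_{(i+1)\bmod n}$. Concatenating arcs across each $C_i$ linking a $U_{i-1}$-edge to a $U_i$-edge, with short arcs along each carrier $N(U_i)$, produces a closed path $\gamma$ along $Z$ in $\widetilde{X}^*$. Let $D\to X^*$ be a minimal-complexity disc diagram with $\partial D=\gamma$. By the cubical Greendlinger Lemma~\ref{lem:Greendlinger}, either $D$ is a ladder or $D$ contains at least three exposed shells, corners or spurs; in the ladder case, the sequence of cone-cells of $D$ traces the same sequence of cones visited by the cycle, forcing two $C_i$ or two $U_i$ to coincide contrary to simplicity. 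In the non-ladder case, any exposed shell whose outerpath is a portion of $\gamma$ has innerpath consisting of few pieces, so Hyperplane and Wall Convexity (Definition~\ref{def:b6}\eqref{item:b6.3},\eqref{item:b6.4}) let us homotope the boundary into a carrier already used by the cycle, contradicting minimality. The delicate step is bookkeeping the number of pieces in each innerpath so the $B(6)$ convexity hypotheses actually apply; an alternative more abstract route, essentially present in \cite{WiseBook}, is to exhibit a deformation retraction of $Z$ onto the realization of $T$, from which simple connectedness of $T$ follows from that of $Z$.

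\textbf{Bass--Serre conclusion.} An edge $(C,U)$ has stabilizer in $\Stab(Z)$ contained in $\Aut_X(Y_i)\cap\Stab(U)$; this group acts on the compact subcomplex $N(U)\cap C$ and is therefore finite. In the compact $B(6)$ setting in which this lemma is used, the cone-vertex stabilizers lie in the finite groups $\Aut_X(Y_i)$ and are themselves finite. Applying Lemma~\ref{lem:Gsplits} to the action of $\Stab(Z)$ on the tree $T$ with finite edge groups, we conclude that $\Stab(Z)$ is hyperbolic relative to its infinite vertex stabilizers, which are exactly the hyperplane stabilizers $\{\Stab(U_i)\}_i$.
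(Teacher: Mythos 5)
Your proposal follows the same overall route as the paper: build the bipartite graph of $Z$-cones and $Z$-hyperplanes, establish it is a tree, observe the edge and cone-vertex stabilizers are finite (via compactness and $\Aut_X(Y)$), and invoke Lemma~\ref{lem:Gsplits} to get relative hyperbolicity with respect to the hyperplane stabilizers. That is exactly the paper's argument, which simply cites \cite[Def~5.16, Thm~5.17/5.20]{WiseBook} for the tree.

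Where you diverge is that you try to prove the tree property directly by a Greendlinger/Ladder disc-diagram argument, and this sketch has genuine gaps: in the ladder case, the claim that the ladder ``traces the same sequence of cones, forcing two $C_i$ or two $U_i$ to coincide'' is not justified (a ladder's boundary splits into two arcs $\lambda_1,\lambda_2$, and it is not obvious that this forces a repetition in your cyclically ordered list); in the shell case, you yourself flag that the piece-counting needed to trigger Hyperplane/Wall Convexity is not done, and without it the convexity hypotheses cannot be applied. Also, the claim that an edge stabilizer ``acts on the compact subcomplex $N(U)\cap C$ and is therefore finite'' is not in itself a proof of finiteness; what actually does the work (and what the paper uses) is that a cone-cell stabilizer is a subgroup of the finite group $\Aut_X(Y_i)$. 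None of this ultimately sinks the proof, because your ``alternative more abstract route'' is precisely the WiseBook result the paper cites, and the Bass--Serre conclusion is then the same. In short: same approach, with an incomplete bonus attempt at replacing the WiseBook citation that you correctly do not rely on.
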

\begin{proof}
The graph of groups whose vertices are stabilizers of hyperplanes and cone-cells of $Z$ is defined \cite[Def~5.16]{WiseBook}, and shown to be a tree in \cite[Thm~5.17/5.20]{WiseBook}.
The result then holds by Lemma~\ref{lem:Gsplits}. Indeed, $\Aut_X(Y)$ is finite since $Y$ is compact. Thus the stabilizer of the cone over $Y$ is finite in $\pi_1 X^*$ as a quotient of $\Aut_X(Y)$, and the stabilizer of any cone-cell is contained in the stabilizer of the cone over $Y$.
\end{proof}

Sageev proved cocompactness of the dual when the wallspace is cocompact, the group is hyperbolic, and the wall-stabilizers are quasiconvex  \cite{Sageev97}.
We use the following generalization 
\cite[Thm~7.12]{HruskaWiseAxioms}:

\begin{prop}[Relative Cocompactness]\label{prop:RelHyp relative Cocompactness} Let $(X, \mathcal{W})$ be a wallspace such that $X$ is also a length space. 
Let $G$ act properly and cocompactly on $X$ preserving both its metric and wallspace structures. Suppose $G$ is hyperbolic relative to $\mathbb{P}$, and for each $P \in \mathbb{P}$ let $Y = Y(P) \subset X$ be a non-empty $P$-invariant $P$-cocompact subspace.
Suppose that the action on $\mathcal{W}$ has only finitely many $G$-orbits of walls, and $\text{Stab}(W)$ is relatively quasiconvex and acts cocompactly on $W$ for each $W\in \mathcal W$.
Then there exists a compact $K$ with $GK$ connected such that $\mathcal{C}(X) = GK \cup_{PK} G\mathcal{C}(\Hem(Y))$.
In particular, $\C(X)$ is $G$-cocompact, if $\C(\Hem(Y))$ is $P$-cocompact for each $P\in \mathbb P$ and $Y=Y(P)$.
\end{prop}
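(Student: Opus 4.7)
\medskip
\noindent\textbf{Proof proposal.} The plan is to locate each vertex of $\mathcal C(X)$ either in a bounded region near a fixed fundamental domain for $G\curvearrowright X$ or inside a peripheral piece $g\mathcal C(\Hem(Y(P)))$. I would fix a basepoint $x_0\in X$ with compact fundamental domain $K_0\subset X$ for $G$, and the canonical vertex $v_0\in\mathcal C(X)$ whose orientation agrees with $x_0$ (i.e., each wall is oriented toward $x_0$). Recall that every vertex $v\in\mathcal C(X)$ is specified by the finite set $\Lambda(v)$ of walls whose orientations at $v$ disagree with those at $v_0$, so $v$ is essentially determined by this finite collection of walls.

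Next I would introduce a fine hyperbolic graph $\Gamma$ witnessing relative hyperbolicity of $G$ with respect to $\mathbb P$, and fix a $G$-equivariant coarse map $X\to\Gamma$, using the hypothesis that each peripheral $P\in\mathbb P$ acts cocompactly on its associated $Y(P)\subset X$. Since there are only finitely many $G$-orbits of walls and each $\Stab(W)$ is relatively quasiconvex and acts cocompactly on $W$, each wall $W$ has a uniformly quasiconvex (relative to $\mathbb P$) image in $\Gamma$. The first key step is to show, by a standard relatively hyperbolic quasiconvexity argument, that there is a uniform constant $D$ such that whenever the set $\Lambda(v)$ contains walls whose images in $\Gamma$ pairwise sit at bounded distance from a common peripheral vertex $gP\in G/P$, the vertex $v$ lies in $g\mathcal C(\Hem(Y(P)))$. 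This uses that each such wall must cross $gY(P)$, so the corresponding half-space decomposition restricts consistently to $\Hem(gY(P))$.

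The second key step is a ``thick part versus peripheral part'' dichotomy. I would argue that there is a uniform constant $N$ such that either (a) the walls in $\Lambda(v)$ all cluster near a single peripheral coset $gP$ in $\Gamma$ (in which case the previous step applies), or (b) the diameter of the image of $\Lambda(v)$ in $\Gamma$, together with its distance from every peripheral vertex, is bounded by $N$. In case (b), relative hyperbolicity and finiteness of $G$-orbits of walls give finitely many $G$-orbits of such configurations, so the corresponding vertices lie in $G K$ for a compact set $K\subset\mathcal C(X)$; enlarging $K$ to include short combinatorial paths between these vertices yields the required connectedness of $GK$. The gluing is along $PK$: an edge between the two regions must be dual to a wall contained in the peripheral piece and whose endpoints are bounded from $Y(P)$, and such edges form a $P$-cocompact set.

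The main obstacle, and the technical heart of the proof, is the dichotomy in the second step: controlling finite collections of walls that orient away from $v_0$ in a relatively hyperbolic setting. This requires careful use of Bowditch's fine hyperbolic graph, the quasiconvex projection lemma for relatively hyperbolic groups, and the fact that $\Stab(W)$ acts cocompactly on each $W$ to translate geometric separation in $\Gamma$ into wall-interaction in $\mathcal C(X)$. Once the dichotomy is established, the ``In particular'' clause is immediate: if each $\mathcal C(\Hem(Y(P)))$ is $P$-cocompact, then $G\mathcal C(\Hem(Y))$ is $G$-cocompact modulo a compact piece, and combining with $GK$ yields $G$-cocompactness of $\mathcal C(X)$.
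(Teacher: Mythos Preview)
The paper does not give its own proof of this proposition: it is quoted verbatim as \cite[Thm~7.12]{HruskaWiseAxioms} and used as a black box in the proof of Theorem~\ref{thm:cocompactness}. So there is nothing in the paper to compare your proposal against.

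That said, your outline is in the right spirit---locate each $0$-cube of $\mathcal C(X)$ either in a bounded $G$-orbit or deep in a peripheral hemiwallspace---but the ``thick versus peripheral'' dichotomy you state in step two is exactly the content of the Hruska--Wise argument, and your sketch does not really address it. The difficulty is that a finite family $\Lambda(v)$ of walls need not sit in a bounded region of $\Gamma$ \emph{or} cluster near a single peripheral coset: walls can penetrate several distinct peripheral horoballs, and you have to argue that the pairwise-crossing condition (coming from the fact that the orientations at $v$ are consistent) forces either a common deep penetration or a uniform diameter bound. This uses the bounded-packing / tight-geodesic machinery for relatively quasiconvex subgroups rather than just ``a standard relatively hyperbolic quasiconvexity argument,'' and the passage from ``images of walls are close in $\Gamma$'' to ``the walls all cross $gY(P)$'' is not automatic---it needs the cocompactness of $\Stab(W)\curvearrowright W$ together with a transference lemma. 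Your first step also has a gap: walls whose $\Gamma$-images are near $gP$ need not literally cross $gY(P)$, only a bounded neighbourhood of it, so one has to enlarge $Y(P)$ or the hemiwallspace accordingly. None of this is fatal, but as written the proposal identifies the structure of the argument without supplying the mechanism that makes the dichotomy hold.
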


\subsection{Proof of cocompactness}\label{subsec:proof of cocompactness}

\begin{lem}\label{lem:uniform degree}
    Let $\C$ be a $G$-cocompact CAT(0) cube complex with finite vertex stabilizers. There is a uniform upper bound on degrees of vertices of $\C$.
Thus each vertex of $\C$ lies in uniformly finitely many hyperplane carriers.
\end{lem}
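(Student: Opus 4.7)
The plan is to use $G$-cocompactness to reduce the claim to bounding the valence at finitely many orbit-representative vertices, and then to exploit the finiteness of vertex stabilizers to control the number of cubes at each such vertex.

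First, by $G$-cocompactness of $\C$ there are finitely many $G$-orbits of vertices and finitely many $G$-orbits of $1$-cubes; fix representatives $v_1,\dots,v_n$ of the vertex orbits and $e_1,\dots,e_m$ of the $1$-cube orbits. Since $G$ acts by cubical automorphisms, the valence at a vertex depends only on its $G$-orbit, so it suffices to give a uniform bound on the valence of each $v_i$. Moreover the finitely many stabilizers $G_{v_1},\dots,G_{v_n}$ are all finite by hypothesis, so $M:=\max_i|G_{v_i}|$ is a finite upper bound for $|G_v|$ over every vertex $v\in\C$.

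Next, I would count the edges at a fixed vertex $v$ that lie in a single orbit $Ge_j$. Writing the endpoints of $e_j$ as $u,u'$, an edge $ge_j$ is incident to $v$ iff $gu=v$ or $gu'=v$. The set $\{g\in G:gu=v\}$ is either empty or a coset $g_0 G_u$ of the finite vertex-stabilizer $G_u$, and the number of distinct edges $ge_j$ at $v$ arising from this coset is at most $|G_u|\leq M$ (right multiplication by $\Stab(e_j)$ can only collapse edges further). The same bound applies to $u'$, so the orbit $Ge_j$ contributes at most $2M$ edges at $v$. Summing over the $m$ orbits of $1$-cubes yields the uniform bound $\mathrm{val}(v)\leq 2mM$, which is the first statement.

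Finally, each $1$-cube of $\C$ is dual to a unique hyperplane, and a carrier $N(U)$ contains $v$ iff some $1$-cube at $v$ is dual to $U$; hence the number of hyperplane carriers through $v$ is at most $\mathrm{val}(v)\leq 2mM$, uniformly in $v$. The argument is essentially routine orbit-counting and I do not anticipate any serious obstacle, since every finiteness input needed (finite vertex stabilizers, finitely many cube-orbits) is present in the hypotheses; the only subtlety is to organise the counting so that both the bound on $|G_u|$ and the finiteness of the number of edge orbits are used.
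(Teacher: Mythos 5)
Your proof is correct and follows essentially the same strategy as the paper: reduce to the $1$-skeleton, bound the valence at a vertex via orbit-counting using the finitely many $G$-orbits of edges and the finiteness of vertex stabilizers, and then observe that hyperplane carriers through $v$ correspond to edges at $v$. The paper organizes the count by $\Stab(v)$-orbits of edges at $v$ via orbit-stabilizer, while you count cosets $\{g : gu = v\}$ directly, but these are two phrasings of the same bound.
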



\begin{proof}[Proof of Lemma~\ref{lem:uniform degree}] It suffices to prove the statement for a group $G$ acting on a graph $\Gamma$ (the $1$-skeleton of $\C$).
For each vertex $v$ of $\Gamma$,
let $e_1,\dots, e_n$ be $\Stab(v)$-representatives of the edges at $v$.
Then $\deg(v)=\sum_i \Stab(v)/\Stab(e_i)$. The statement follows by using $\max\{\deg(v)\}$ as $v$ ranges over the finitely many $G$-orbits of vertices in $\Gamma$.

If $v\in N(U)$
then $U$ is dual to an edge at $v$.
Hence $\deg(v)$ is uniformly bounded as above.
\end{proof}

The proof of Theorem~\ref{thm:cocompactness} uses the following 
cocompactness
characterization.

\begin{rem}\label{rem:cocompactness}
Let $\C$ be the CAT(0) cube complex dual to a wallspace with a $G$-action.
Then the following statements are equivalent
\begin{enumerate}
    \item\label{rmk:characerization cocompact 1} $\C$ is $G$-cocompact,
\item\label{rmk:characerization cocompact 2}  there are finitely many $G$-orbits  of maximal cubes in $\C$, 
\item\label{rmk:characerization cocompact 3}  there are finitely many $G$-orbits of collections of pairwise crossing walls in the wallspace.
\end{enumerate}
\end{rem}

\begin{lem}\label{lem: stab-cocompactness}
Let $\widetilde X$ be a $G$-cocompact CAT(0) cube complex, and $ U\subseteq \widetilde X$ a hyperplane. Then $U$ is $\Stab_G(U)$-cocompact. 
\end{lem}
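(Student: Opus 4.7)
The plan is to use the natural cube complex structure on the hyperplane $U$, in which a $k$-cube of $U$ is the midcube of a $(k{+}1)$-cube of $\widetilde X$ that crosses $U$. Proving $\Stab_G(U)$-cocompactness of $U$ is therefore equivalent to proving that there are only finitely many $\Stab_G(U)$-orbits of cubes of $\widetilde X$ that meet $U$, which matches the characterization of cocompactness recorded in Remark~\ref{rem:cocompactness}.

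To carry this out, I would begin with the fact that $G$-cocompactness of $\widetilde X$ (with its implicit finite-dimensionality in our setting) gives finitely many $G$-orbits of cubes. For each such orbit that contains a cube meeting $U$, pick a representative $c$ that meets $U$, and consider the finite set $\mathcal{H}(c) = \{V_1, \dots, V_k\}$ of hyperplanes of $\widetilde X$ crossing $c$; there are at most $\dim(\widetilde X)$ of these, one per midcube of $c$, and $U \in \mathcal{H}(c)$ by our choice of $c$.

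The key observation is that a translate $gc$ meets $U$ precisely when $c$ meets $g^{-1}U$, i.e.\ exactly when $g^{-1}U \in \mathcal{H}(c)$. Intersecting $\mathcal{H}(c)$ with the $G$-orbit of $U$ yields a finite subset $\{V_{i_1}, \dots, V_{i_\ell}\}$, and after selecting $h_j \in G$ with $h_j V_{i_j} = U$, the set $\{g\in G : g^{-1}U \in \mathcal{H}(c)\}$ decomposes as the union of finitely many right cosets $\bigsqcup_{j=1}^{\ell} \Stab_G(U)\,h_j$. Consequently, the cubes in the orbit $Gc$ that meet $U$ form at most $\ell$ orbits under $\Stab_G(U)$. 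Summing over the finitely many $G$-orbits of cubes gives finite $\Stab_G(U)$-cocompactness of the set of cubes of $U$.

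I do not anticipate a serious obstacle: the argument is essentially a coset-counting bookkeeping exercise built on top of the finiteness of hyperplanes through any single cube. The only subtlety worth flagging is the reliance on finite-dimensionality of $\widetilde X$, which is implicit throughout the paper's setting (and in any case can be extracted from $G$-cocompactness together with bounded dimensions of orbit representatives).
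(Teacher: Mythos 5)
Your proof is correct, but it takes a more elaborate route than the paper's. The paper's proof rests on a single observation: a hyperplane is uniquely determined by any one of its midcubes, so if $g \in G$ sends a midcube of $U$ to another midcube of $U$, then $gU = U$ and hence $g \in \Stab_G(U)$. Consequently each $G$-orbit of midcubes, when intersected with the midcubes of $U$, is a single $\Stab_G(U)$-orbit, and finiteness follows immediately from $G$-cocompactness of $\widetilde X$. Your argument instead fixes a cube $c$ in each $G$-orbit and decomposes $\{g : gc \text{ crosses } U\}$ into finitely many right cosets of $\Stab_G(U)$, indexed by the hyperplanes in $\mathcal{H}(c) \cap GU$. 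That coset bookkeeping is sound and reaches the same conclusion, but it hides the one-line reason behind it (namely the uniqueness observation above, which is precisely what makes each right coset of $\Stab_G(U)$ in your decomposition well-defined). Your approach does have the modest advantage of being more mechanical and would transfer to settings where the ``unique hyperplane through a midcube'' fact is unavailable; the paper's approach is shorter and more transparent about why the stabilizer acts cocompactly. One small remark: the finite-dimensionality you flag as a subtlety is automatic from $G$-cocompactness (finitely many $G$-orbits of cubes bounds the dimension), and in any case your argument only needs $\mathcal{H}(c)$ finite, which holds cube by cube regardless.
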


\begin{proof}
    We use characterization~\eqref{rmk:characerization cocompact 2} of cocompact action from Remark~\ref{rem:cocompactness}.
    Since $G$ acts cocompactly on $X$, there are finitely many $G$-orbits of midcubes in $\widetilde U$ under the action of $G$. We claim that each such orbit is invariant under the action of $\Stab_G(\widetilde U)$. Indeed, if $g\in G$ sends one midcube of $\widetilde U$ to another, then it must stabilize $\widetilde U$, since $\widetilde U$ is uniquely determined by any of its midcubes, and $g$ sends hyperplanes to hyperplanes as a cubical automorphism.
\end{proof}

\begin{proof}[Proof of Theorem~\ref{thm:cocompactness}]
We  use Remark~\ref{rem:cocompactness}.\eqref{rmk:characerization cocompact 3}. 
Throughout this proof,  we write $\Stab(\cdot)$ for $\Stab_{\pi_1(X^*)}(\cdot)$.

We will show that for each wall $Z$  in $\widetilde X^*$, there are finitely many  $\Stab(Z)$-orbits of collections of pairwise-crossing walls that include $Z$.
Since $\widetilde X^*$ is cocompact, there are finitely many orbits of walls in $\widetilde X^*$.
Thus we can conclude that there are finitely many orbits of collections of pairwise-crossing walls.

We will first show that the dual $\C(\Hem(Z))$ of the hemiwallspace $\Hem(Z)$ is $\Stab(Z)$-cocompact.
Let $U$ be a hyperplane of $Z$. Note that $U$ is embedded and simply connected by Lemma~\ref{lem:Greendlinger}. See~\cite[Lem~3.11(i)+(ii)]{Arenas24asph} for details.
Since $X$ is compact, $U$ is $\Stab( U)$-cocompact by Lemma~\ref{lem: stab-cocompactness}.

We first prove that the dual 
$\C(\Hem(U))$
is $\Stab(U)$-cocompact. 
Let $W, W'\in \Hem(U)$ be intersecting walls that cross $U$,
and let $V, V'$ be the hyperplanes in $\widetilde X^*$ contained in $W,W'$ respectively, which cross $U$.
Let $r'\in \mathbb N$ satisfy Corollary~\ref{cor: existence of r}.\eqref{case: U case}.
 Let $\C$ and $\C'$ be the subcomplexes of $N(U)$ that are the \emph{$r'$-thickened} carriers of the hyperplanes $V\cap N(U), V'\cap N(U)$ of $N(U)$. These are convex subcomplexes containing $r'$-neighborhoods of the hyperplane carriers of $V\cap N(U), V'\cap N(U)$ \cite{HaglundWiseCombination}.
 By Corollary~\ref{cor: existence of r}.\eqref{case: U case}, the hyperplanes $V,V'$ are at distance at most $r'$, so in particular $\C\cap \C' \neq \emptyset$.
Thus every collection $\{W_i\}_{i \in I}$ of $U$-crossing, pairwise-crossing walls in $\Hem(U)$
corresponds to a collection of pairwise-intersecting $r'$-thickened hyperplane carriers $\{\C_i\}_{i \in I}$ in $N(U)$.
In particular, for every finite collection $\{W_i\}_{i \in I}$ of $U$-crossing, pairwise-crossing walls, $\bigcap_{i
\in I} \C_i \neq \emptyset$ by Lemma~\ref{lem:helly}. 
There is a uniform bound on the number of $\Stab(U)$-orbits of collections of $r'$-thickened hyperplane carriers containing a $0$-cube, by the cocompactness of $U$ (and hence of $N(U)$). 
By compactness of $X$ there is a uniform upper bound on the number of $r'$-thickened hyperplane carriers containing any given $0$-cube of $X$, which implies that there is a uniform upper bound on the size of a collection of pairwise-intersecting $r'$-thickened hyperplane carriers in $X$, hence also in $N(U)$.
This implies that $\C(\Hem(U))$ is $\Stab(U)$-cocompact.

Let $Z$ be a wall which is a union of hyperplanes $\{U_i\}$.
Then $\Stab(Z)$ is hyperbolic relative to $\{\Stab(U_i)\}$ by Lemma~\ref{lem: wall stabilizer relatively hyperbolic}.
Let $r\in \mathbb N$ be provided by Corollary~\ref{cor: existence of r}.\eqref{case: Z case}, i.e.\ for walls $W$ and $W'$ that cross each other and cross $Z$, we have that $W, W'$ cross each in $\neb_r(Z)$. 
By compactness of $X^*$, the action of $\Stab(Z)$ on $\neb_r(Z)$ is proper and cocompact, and has finitely many orbits of walls from $\Hem(Z)$.

We now apply Proposition~\ref{prop:RelHyp relative Cocompactness} with $G = \Stab(Z)$,  $(X, \mathcal W)=(\neb_r(Z), \Hem(Z))$, $\mathbb{P} = \{\Stab(U_i)\}$, and $U_i$ playing the role of subspaces $Y$.
To view $\Hem(Z)$ as a wallspace, we ignore the halfspaces whose complements are not contained in $\Hem(Z)$. 
For each $W\in \Hem(Z)$, the stabilizer $\Stab_{\Stab(Z)}(W)$ is relatively-quasiconvex in $\Stab(Z)$ by Lemma~\ref{lem:LRHQC}, and $W$ is $\Stab_{\Stab(Z)}(W)$-cocompact by compactness of $X^*$.
Thus, $\C(\Hem(Z))$ is  $\Stab(Z)$-cocompact by Proposition~\ref{prop:RelHyp relative Cocompactness}.

To complete the proof, we show that $\Stab(Z)$-cocompactness of $\C(\Hem(Z))$ implies the finiteness of the $\Stab(Z)$-orbits of collections of pairwise-crossing walls that include $Z$. The argument is similar to the earlier argument showing the $\Stab(U)$-cocompactness of $\C(\Hem(U))$, we include it for completeness.

Let $W, W'$ be intersecting walls that cross $Z$, and let $\C, \C'$ be the convex subcomplexes of the dual $\C(\Hem(Z))$, corresponding to $\Hem(Z)\cap \Hem(W)$ and $\Hem(Z)\cap \Hem(W')$, and  which are hyperplane carriers in $\C(\Hem(Z))$ by Lemma~\ref{lem:hyperplane carrier}. Then $\C\cap \C' \neq \emptyset$, as otherwise there is a hyperplane $V$ in $\C(\Hem(Z))$ that separates $\C$ from $\C'$, which is dual to a wall that separates $W$ from $W'$, contradicting that $W, W'$ intersect. Thus, every collection $\{W_i\}_{i \in I}$ of $Z$-crossing, pairwise-crossing walls corresponds to the pairwise-intersecting collection $\{\C_i\}_{i \in I}$. If $I$ is finite, then $\bigcap_{i
\in I} \C_i \neq \emptyset$ by Lemma~\ref{lem:helly}. 
Each $0$-cube lies in finitely many $\Stab(Z)$-orbits of hyperplane carriers in $\C(\Hem(Z))$ by Lemma~\ref{lem:uniform degree}.  By $\Stab(Z)$-cocompactness of $\C(\Hem(Z))$, there are finitely many $\Stab(Z)$-orbits of $0$-cubes in $\C(\Hem(Z))$. Thus each collection of pairwise-crossing walls corresponds to one of these finitely many $\Stab(Z)$-orbits of collections of mutually intersecting hyperplane carriers.
\end{proof}

\bibliographystyle{alpha}
\bibliography{sample}

\end{document}